\providecommand{\tabularnewline}{\\}
\numberwithin{equation}{section}
\numberwithin{figure}{section}
\theoremstyle{plain}
\newtheorem{thm}{\protect\theoremname}
\theoremstyle{plain}
\newtheorem{lem}[thm]{\protect\lemmaname}
\theoremstyle{remark}
\newtheorem{rem}[thm]{\protect\remarkname}
\providecommand{\lemmaname}{Lemma}
\providecommand{\remarkname}{Remark}
\providecommand{\theoremname}{Theorem}
\begin{document}

\title{Positivity-preserving and entropy-bounded discontinuous Galerkin
method for the chemically reacting, compressible Navier-Stokes equations}

\author[NRL]{Eric J. Ching}
\author[NRL]{Ryan F. Johnson}
\author[UFL]{Sarah Burrows}
\author[UCF]{Jacklyn Higgs}
\author[NRL]{Andrew D. Kercher}

\address[NRL]{Laboratories for Computational Physics and Fluid Dynamics, U.S. Naval Research Laboratory, 4555 Overlook Ave SW, Washington, DC 20375}
\address[UFL]{Department of Mechanical \& Aerospace Engineering, University of Florida, Gainesville, FL 32611}
\address[UCF]{Department of Mechanical and Aerospace Engineering, University of Central Florida, Orlando, FL 32816}

\begin{abstract}
This article concerns the development of a fully conservative, positivity-preserving,
and entropy-bounded discontinuous Galerkin scheme for the multicomponent,
chemically reacting, compressible Navier-Stokes equations with complex
thermodynamics. In particular, we extend to viscous flows the fully
conservative, positivity-preserving, and entropy-bounded discontinuous
Galerkin method for the chemically reacting Euler equations that we
previously introduced. An important component of the formulation is
the positivity-preserving Lax-Friedrichs-type viscous flux function
devised by Zhang {[}\emph{J. Comput. Phys.}, 328 (2017), pp. 301-343{]},
which was adapted to multicomponent flows by Du and Yang {[}\emph{J.
Comput. Phys.}, 469 (2022), pp. 111548{]} in a manner that treats
the inviscid and viscous fluxes as a single flux. Here, we similarly
extend the aforementioned flux function to multicomponent flows but
separate the inviscid and viscous fluxes, resulting in a different
dissipation coefficient. This separation of the fluxes allows for
use of other inviscid flux functions, as well as enforcement of entropy
boundedness on only the convective contribution to the evolved state,
as motivated by physical and mathematical principles. We also detail
how to account for boundary conditions and incorporate previously
developed techniques to reduce spurious pressure oscillations into
the positivity-preserving framework. Furthermore, potential issues
associated with the Lax-Friedrichs-type viscous flux function in the
case of zero species concentrations are discussed and addressed. Comparisons
between the Lax-Friedrichs-type viscous flux function and more conventional
flux functions are provided, the results of which motivate an adaptive
solution procedure that employs the former only when the element-local
solution average has negative species concentrations, nonpositive
density, or nonpositive pressure. The resulting formulation is compatible
with curved, multidimensional elements and general quadrature rules
with positive weights. A variety of multicomponent, viscous flows
is computed, ranging from a one-dimensional shock tube problem to
multidimensional detonation waves and shock/mixing-layer interaction.

\end{abstract}
\begin{keyword}
Discontinuous Galerkin method; Combustion; Detonation; Minimum entropy
principle; Positivity-preserving; Multicomponent Navier-Stokes equations
\end{keyword}
\maketitle
\global\long\def\middlebar{\,\middle|\,}%
\global\long\def\average#1{\left\{  \!\!\left\{  #1\right\}  \!\!\right\}  }%
\global\long\def\expnumber#1#2{{#1}\mathrm{e}{#2}}%
 \newcommand*{\horzbar}{\rule[.5ex]{2.5ex}{0.5pt}}

\global\long\def\revisionmath#1{\textcolor{red}{#1}}%

\makeatletter \def\ps@pprintTitle{  \let\@oddhead\@empty  \let\@evenhead\@empty  \def\@oddfoot{\centerline{\thepage}}  \let\@evenfoot\@oddfoot} \makeatother

\let\svthefootnote\thefootnote\let\thefootnote\relax\footnotetext{\\ \hspace*{65pt}DISTRIBUTION STATEMENT A. Approved for public release. Distribution is unlimited.}\addtocounter{footnote}{-1}\let\thefootnote\svthefootnote

\section{Introduction\label{sec:Introduction}}

In the past two decades, interest in the discontinuous Galerkin (DG)
method for fluid flow simulations has surged dramatically. This method
benefits from arbitrarily high order of accuracy on unstructured grids,
as well as a compact stencil and high arithmetic intensity suited
for modern computing systems. However, one of the primary obstacles
to widespread use of this numerical scheme is its susceptibility to
nonlinear instabilities in underresolved regions and near non-smooth
features. Robustness is an even greater concern when mixtures of thermally
perfect gases and chemical reactions are considered~\citep{Ban20,Joh20_2}.
For instance, it is well-known that fully conservative schemes fail
to maintain pressure equilibrium~\citep{Abg88,Kar94,Abg96}, leading
to the generation of spurious pressure oscillations that can cause
solver divergence. A number of quasi-conservative methods, such as
the double-flux technique~\citep{Abg01,Lv15,Bil11}, have been proposed
to circumvent this issue, typically at the expense of energy conservation.
Recently, Johnson and Kercher introduced a fully conservative DG scheme
that more effectively maintains pressure equilibrium in smooth regions
of the flow~\citep{Joh20_2}. Strang splitting was employed to decouple
the temporal integration of the convective and diffusive operators
from that of stiff chemical source terms. Artificial viscosity was
used to stabilize the solution near shocks and other non-smooth features.
However, artificial viscosity alone is often not sufficient to guarantee
stability. To further increase robustness, we made key advancements
to this fully conservative DG method, focusing on the inviscid case,
to ensure satisfaction of the positivity property (i.e., nonnegative
species concentrations, positive density, and positive pressure) and
an entropy bound based on the minimum entropy principle for the multicomponent
Euler equations~\citep{Gou20,Chi22}, which states that the spatial
minimum of specific thermodynamic entropy of entropy solutions is
a nondecreasing function of time. The main ingredients of this DG
formulation~\citep{Chi22,Chi22_2} are (a) an invariant-region-preserving
inviscid flux function~\citep{Jia18}, (b) a simple linear-scaling
limiter~\citep{Zha10}, (c) satisfaction of a time-step-size constraint
for the transport step with a strong-stability-preserving explicit
time integrator, (d) incorporation of the techniques introduced by
Johnson and Kercher~\citep{Joh20_2} that reduce spurious pressure
oscillations in smooth flow regions, and (e) an entropy-stable DG
discretization in time based on diagonal-norm summation-by-parts operators
for the reaction step. It was found that the formulation was capable
of robustly and accurately computing complex inviscid, reacting flows
using high-order polynomials and relatively coarse meshes. Enforcement
of entropy boundedness was critical for stability in simulations of
multidimensional detonation waves.

The consideration of viscous flows brings about additional complications.
Using conventional viscous flux functions, such as the second form
of Bassi and Rebay (BR2)~\citep{Bas00} and the symmetric interior
penalty method (SIPG)~\citep{Har08}, positivity is not guaranteed
to be maintained. Specifically, it is possible for the constraint
on the time step size to be arbitrarily small for the solution to
satisfy said property. To remedy this problem, Zhang~\citep{Zha17}
introduced a Lax-Friedrichs-type viscous flux function for the monocomponent
Navier-Stokes equations, accompanied by a strictly positive upper
bound on the time step size to guarantee satisfaction of the positivity
property. Although it may be surprising that the viscous flux can
be a primary source of negative concentrations, nonpositive density,
and/or nonpositive pressure, we call attention to certain numerical
challenges specific to multicomponent-flow simulations: not only are
nonlinear instabilities more likely to occur, but also species concentrations
are typically close or equal to zero, such that small numerical errors
can easily lead to negative concentrations. Furthermore, in the monocomponent
case, the mass conservation equation is identical between the Euler
system and the Navier-Stokes system; therefore, the diffusive operator
does not directly contribute to negative densities. However, this
is not true in the multicomponent case, which means that the viscous
flux can indeed be largely responsible for negative concentrations.
Note that many multicomponent-flow codes simply ``clip'' negative
species concentrations, but such an intrusive strategy violates mass
conservation and pollutes the solution with low-order errors.

Du and Yang~\citep{Du22} recently extended the aforementioned Lax-Friedrichs-type
viscous flux function to multicomponent flows. Specifically, they
combined the inviscid and viscous fluxes into a single flux, such
that the resulting dissipation coefficient accounts for both fluxes
simultaneously. Entropy boundedness was not considered. Instead of
operator splitting, they employed an exponential multistage/multistep,
explicit time integration scheme~\citep{Hua18,Du19,Du19_2} that
can handle stiff source terms. Although in the present study we use
operator splitting since it has proven successful to date and its
accuracy is less reliant on ``well-prepared'' initial conditions~\citep{Hua18,Du19,Du19_2},
exponential multistage/multistep time integrators are indeed worthy
of future investigation.

In this work, we develop a fully conservative, positivity-preserving,
and entropy-bounded DG method for the compressible, multicomponent,
chemically reacting Navier-Stokes equations. We focus on the transport
step since the treatment of stiff chemical source terms is identical
to that in the inviscid case. Enforcement of a lower bound on the
specific thermodynamic entropy is performed on only the convective
contribution to the evolved state since the viscous flux function
is not fully compatible with said entropy bound. This was also done
by Dzanic and Witherden~\citep{Dza22}, albeit in a different manner,
in their entropy-based filtering framework. Furthermore, at least
in the monocomponent, calorically perfect setting, the minimum entropy
principle does not hold for the Navier-Stokes equations unless the
thermal diffusivity is zero~\citep{Tad86,Gue14}. Although such analysis
has not yet been performed for the multicomponent Navier-Stokes equations
with the thermally perfect gas model, we do not expect the conclusion
to change. Our primary contributions are as follows:
\begin{itemize}
\item We extend the aforementioned positivity-preserving Lax-Friedrichs-type
viscous flux function~\citep{Zha17} to multicomponent flows. Specifically,
unlike in~\citep{Du22}, we treat the inviscid and viscous fluxes
separately, resulting in a different dissipation coefficient. The
rationale for separating the fluxes is twofold. First, enforcing a
bounded entropy on the convective contribution necessitates isolating
the fluxes. Secondly, in our experience, we have found the HLLC inviscid
flux function to perform more favorably than the Lax-Friedrichs inviscid
flux function. We also discuss the treatment of boundary conditions
in more detail.
\item Entropy boundedness is enforced on only the convective contribution
in a rigorous manner that maintains full compatibility with the positivity
property.
\item We discuss potential issues associated with the Lax-Friedrichs-type
viscous flux function if any of the species concentrations is zero.
This is true regardless of whether the inviscid and viscous fluxes
are treated simultaneously or separately. A remedy for this pathological
case is proposed.
\item We incorporate the techniques by Johnson and Kercher~\citep{Joh20_2}
that reduce spurious pressure oscillations into the positivity-preserving
framework, which imposes an additional constraint on the time step
size.
\item The performance of the Lax-Friedrichs-type viscous flux function is
assessed. Optimal convergence for smooth flows is observed. However,
comparisons with the BR2 scheme indicate that when possible, the latter
is generally still preferred. As such, we employ an adaptive solution
procedure that only employs the Lax-Friedrichs-type viscous flux function
when necessary.
\item The proposed formulation is compatible with curved, multidimensional
elements of arbitrary shape and general quadrature rules with positive
weights. We first apply it to a series of one-dimensional viscous
flows: advection-diffusion of a thermal bubble, a premixed flame,
and shock-tube flow. More complex viscous flow problems are then considered,
namely a two-dimensional detonation wave enclosed by adiabatic walls
and three-dimensional shock/mixing-layer interaction.  Just as in
the inviscid case, enforcement of the entropy bound significantly
improves the stability of the solution.
\end{itemize}
The remainder of this article is organized as follows. The governing
equations, transport properties, and thermodynamic relations are summarized
in Section~\ref{sec:governing_equations}, followed by a review of
the basic DG discretization in Section~\ref{sec:DG-discretization}.
Section~\ref{sec:positivity-preserving-entropy-bounded-DG} presents
the positivity-preserving and entropy-bounded DG method for the transport
step. Results for a variety of test cases are given in the next section.
The paper concludes with some final remarks.

\section{Governing equations\label{sec:governing_equations}}

The compressible, multicomponent, chemically reacting Navier-Stokes
equations in $d$ spatial dimensions are given by
\begin{equation}
\frac{\partial y}{\partial t}+\nabla\cdot\mathcal{F}\left(y,\nabla y\right)-\mathcal{S}\left(y\right)=0,\label{eq:conservation-law-strong-form}
\end{equation}
where $y$ is the state vector, $\nabla y$ is its spatial gradient,
$t$ is time, $\mathcal{F}$ is the flux, and $\mathcal{S}=\left(0,\ldots,0,0,\omega_{1},\ldots,\omega_{n_{s}}\right)^{T}$
is the chemical source term, with $\omega_{i}$ corresponding to the
production rate of the $i$th species. The physical coordinates are
denoted by $x=(x_{1},\ldots,x_{d})$. The vector of state variables
is expanded as

\begin{equation}
y=\left(\rho v_{1},\ldots,\rho v_{d},\rho e_{t},C_{1},\ldots,C_{n_{s}}\right)^{T},\label{eq:reacting-navier-stokes-state}
\end{equation}
where $\rho$ is density, $v=\left(v_{1},\ldots,v_{d}\right)$ is
the velocity vector, $e_{t}$ is the specific total energy, $C=\left(C_{1},\ldots,C_{n_{s}}\right)$
is the vector of molar concentrations, and $n_{s}$ is the number
of species. The partial density of the $i$th species is defined as
\[
\rho_{i}=W_{i}C_{i},
\]
where $W_{i}$ is the molecular weight of the $i$th species, from
which the density can be computed as

\[
\rho=\sum_{i=1}^{n_{s}}\rho_{i}.
\]

\noindent The mole and mass fractions of the $i$th species are given
by
\[
X_{i}=\frac{C_{i}}{\sum_{i=1}^{n_{s}}C_{i}},\quad Y_{i}=\frac{\rho_{i}}{\rho}.
\]
The equation of state for the mixture is written as
\begin{equation}
P=R^{0}T\sum_{i=1}^{n_{s}}C_{i},\label{eq:EOS}
\end{equation}
where $P$ is the pressure, $T$ is the temperature, and $R^{0}$
is the universal gas constant. The specific total energy is the sum
of the mixture-averaged specific internal energy, $u$, and the specific
kinetic energy, written as

\[
e_{t}=u+\frac{1}{2}\sum_{k=1}^{d}v_{k}v_{k},
\]
where the former is the mass-weighted sum of the specific internal
energies of each species, given by
\[
u=\sum_{i=1}^{n_{s}}Y_{i}u_{i}.
\]
With the thermally perfect gas model, $u_{i}$ is defined as
\[
u_{i}=h_{i}-R_{i}T=h_{\mathrm{ref},i}+\int_{T_{\mathrm{ref}}}^{T}c_{p,i}(\tau)d\tau-R_{i}T,
\]
where $h_{i}$ is the specific enthalpy of the $i$th species, $R_{i}=R^{0}/W_{i}$,
$T_{\mathrm{ref}}$ is the reference temperature of 298.15 K, $h_{\mathrm{ref},i}$
is the reference-state species formation enthalpy, and $c_{p,i}$
is the specific heat at constant pressure of the $i$th species, which
is approximated with a polynomial as a function of temperature based
on the NASA coefficients~\citep{Mcb93,Mcb02}, i.e.,
\begin{equation}
c_{p,i}=\sum_{k=0}^{n_{p}}a_{ik}T^{k}.\label{eq:specific_heat_polynomial}
\end{equation}

\noindent The mixture-averaged specific thermodynamic entropy is obtained
via a mass-weighted sum of the specific entropies of each species
as
\[
s=\sum_{i=1}^{n_{s}}Y_{i}s_{i},
\]
with $s_{i}$ defined as
\[
s_{i}=s_{\mathrm{ref},i}^{o}+\int_{T_{\mathrm{ref}}}^{T}\frac{c_{v,i}(\tau)}{\tau}d\tau-R_{i}\log\frac{C_{i}}{C_{\mathrm{ref}}},
\]
where $s_{\mathrm{ref},i}^{o}$ is the species formation entropy at
the reference temperature and reference pressure, $P_{\mathrm{ref}}$,
of 1 atm, $c_{v,i}=c_{p,i}-R_{i}$ is the specific heat at constant
volume of the $i$th species, and $C_{\mathrm{ref}}=P_{\mathrm{ref}}/R^{0}T_{\mathrm{ref}}$
is the reference concentration.

The flux can be expressed as the difference between the convective
flux, $\mathcal{F}^{c}$, and the viscous flux, $\mathcal{F}^{v}$,
i.e.,

\[
\mathcal{F}\left(y,\nabla y\right)=\left(\mathcal{F}^{c}\left(y\right)-\mathcal{F}^{v}\left(y,\nabla y\right)\right),
\]
where the $k$th spatial components are defined as
\begin{equation}
\mathcal{F}_{k}^{c}\left(y\right)=\left(\rho v_{k}v_{1}+P\delta_{k1},\ldots,\rho v_{k}v_{d}+P\delta_{kd},v_{k}\left(\rho e_{t}+P\right),v_{k}C_{1},\ldots,v_{k}C_{n_{s}}\right)^{T}\label{eq:reacting-navier-stokes-spatial-convective-flux-component}
\end{equation}
 and
\begin{equation}
\mathcal{F}_{k}^{v}\left(y,\nabla y\right)=\left(\tau_{1k},\ldots,\tau_{dk},\sum_{j=1}^{d}\tau_{kj}v_{j}+\sum_{i=1}^{n_{s}}W_{i}C_{i}h_{i}V_{ik}-q_{k},C_{1}V_{1k},\ldots,C_{n_{s}}V_{n_{s}k}\right)^{T},\label{eq:navier-stokes-viscous-flux-spatial-component}
\end{equation}
respectively. $\tau$ is the viscous stress tensor, $q$ is the heat
flux, and $V_{ik}$ is the $k$th spatial component of the diffusion
velocity of the $i$th species, defined as

\[
V_{ik}=\hat{V}_{ik}-\frac{\sum_{l=1}^{n_{s}}W_{l}C_{l}\hat{V}_{lk}}{\rho},\quad\hat{V}_{ik}=\frac{\bar{D}_{i}}{C_{i}}\frac{\partial C_{i}}{\partial x_{k}}-\frac{\bar{D}_{i}}{\rho}\frac{\partial\rho}{\partial x_{k}},
\]
which includes a standard correction to ensure mass conservation (i.e.,
$\sum_{i=1}^{n_{s}}W_{i}C_{i}V_{ik}=0$)~\citep{Cof81,Hou11}. $\bar{D}_{i}$
is the mixture-averaged diffusion coefficient of the $i$th species,
obtained as~\citep{Kee89}
\[
\bar{D}_{i}=\frac{1}{\bar{W}}\frac{\sum_{j=1,j\ne i}^{n_{s}}X_{j}W_{j}}{\sum_{j=1,j\ne i}^{n_{s}}X_{j}/D_{ij}},
\]
where $\bar{W}=\rho/\sum_{i}C_{i}$ is the mixture molecular weight
and $D_{ij}$ is the binary diffusion coefficient between the $i$th
and $j$th species, which is a positive function of temperature and
pressure~\citep{Kee05,cantera}. Note that $\bar{D}_{i}$ can be
nonzero for $C_{i}=0$. The $k$th spatial components of the viscous
stress tensor and the heat flux are written as

\noindent
\[
\tau_{k}\left(y,\nabla y\right)=\mu\left(\frac{\partial v_{1}}{\partial x_{k}}+\frac{\partial v_{k}}{\partial x_{1}}-\delta_{k1}\frac{2}{3}\sum_{j=1}^{d}\frac{\partial v_{j}}{\partial x_{j}},\ldots,\frac{\partial v_{d}}{\partial x_{k}}+\frac{\partial v_{k}}{\partial x_{d}}-\delta_{kd}\frac{2}{3}\sum_{j=1}^{d}\frac{\partial v_{j}}{\partial x_{j}}\right),
\]

\noindent where $\mu$ is the dynamic viscosity, calculated using
the Wilke model~\citep{Wil50}, and
\begin{eqnarray*}
q_{k}\left(y,\nabla y\right) & = & -\lambda_{T}\frac{\partial T}{\partial x_{k}},
\end{eqnarray*}
where $\lambda_{T}$ is the thermal conductivity, computed with the
Mathur model~\citep{Mat67}, respectively. The viscous flux can also
be written as
\begin{equation}
\mathcal{F}^{v}\left(y,\nabla y\right)=G\left(y\right):\nabla y\label{eq:viscous-flux-homogeneity-tensor}
\end{equation}
where $G\left(y\right)$ is the homogeneity tensor~\citep{Har13},
obtained by differentiating the viscous flux with respect to the gradient,
i.e., $G(y)=\partial\mathcal{F}^{v}/\partial\nabla y$. Additional
information on the thermodynamic relations, transport properties,
and chemical reaction rates can be found in~\citep{Joh20_2} and~\citep{Chi22}.

\section{Discontinuous Galerkin discretization\label{sec:DG-discretization}}

This section summarizes the DG discretization of Equation~\ref{eq:conservation-law-strong-form}
and the approach introduced by Johnson and Kercher~\citep{Joh20_2}
to suppress spurious pressure oscillations in smooth regions of the
flow.

Let the computational domain, $\Omega$, be partitioned by $\mathcal{T}$,
which consists of non-overlapping cells, $\kappa$, with boundaries
$\partial\kappa$. Let $\mathcal{E}=\mathcal{E_{I}}\cup\mathcal{E}_{\partial}$
be the set of interfaces, $\epsilon$, such that $\cup_{\epsilon\in\mathcal{E}}\epsilon=\cup_{\kappa\in\mathcal{T}}\partial\kappa$,
comprised of the interior interfaces,
\[
\epsilon_{\mathcal{I}}\in\mathcal{E_{I}}=\left\{ \epsilon_{\mathcal{I}}\in\mathcal{E}\middlebar\epsilon_{\mathcal{I}}\cap\partial\Omega=\emptyset\right\} ,
\]
and boundary interfaces,
\[
\epsilon_{\partial}\in\mathcal{E}_{\partial}=\left\{ \epsilon_{\partial}\in\mathcal{E}\middlebar\epsilon_{\partial}\subset\partial\Omega\right\} .
\]
At a given interior interface, there exists $\kappa^{+},\kappa^{-}\in\mathcal{T}$
such that $\epsilon_{\mathcal{I}}=\partial\kappa^{+}\cap\partial\kappa^{-}$.
$n^{+}$ is the outward-facing normal of $\kappa^{+}$, and $n^{+}=-n^{-}$.
The discrete subspace $V_{h}^{p}$ over $\mathcal{T}$ is defined
as
\begin{eqnarray}
V_{h}^{p} & = & \left\{ \mathfrak{v}\in\left[L^{2}\left(\Omega\right)\right]^{m}\middlebar\forall\kappa\in\mathcal{T},\left.\mathfrak{v}\right|_{\kappa}\in\left[\mathcal{P}_{p}(\kappa)\right]^{m}\right\} ,\label{eq:discrete-subspace}
\end{eqnarray}
where $m=n_{s}+d+1$ is the number of state variables and $\mathcal{P}_{p}(\kappa)$
in one spatial dimension is the space of polynomial functions of degree
no greater than $p$ in $\kappa$.  In multiple dimensions, the choice
of polynomial space often depends on the element type~\citep{Har13}.

To solve for the discrete solution, we require $y\in V_{h}^{p}$ to
satisfy

\begin{gather}
\sum_{\kappa\in\mathcal{T}}\left(\frac{\partial y}{\partial t},\mathfrak{v}\right)_{\kappa}-\sum_{\kappa\in\mathcal{T}}\left(\mathcal{F}^{c}\left(y,\nabla y\right),\nabla\mathfrak{v}\right)_{\kappa}+\sum_{\epsilon\in\mathcal{E}}\left(\mathcal{F}^{c\dagger}\left(y,n\right),\left\llbracket \mathfrak{v}\right\rrbracket \right)_{\epsilon}-\sum_{\epsilon\in\mathcal{E}}\left(\average{\mathcal{F}^{v}\left(y,\nabla y\right)}\cdot n-\delta^{v}\left(y,\nabla y,n\right),\left\llbracket \mathfrak{v}\right\rrbracket \right)_{\epsilon}\nonumber \\
+\sum_{\kappa\in\mathcal{T}}\left(G\left(y^{+}\right):\left(\average y-y^{+}\right)\otimes n,\nabla\mathfrak{v}\right)_{\partial\kappa}-\sum_{\kappa\in\mathcal{T}}\left(\mathcal{S}\left(y\right),\mathfrak{v}\right)_{\kappa}=0\qquad\forall\mathfrak{v}\in V_{h}^{p},\label{eq:semi-discrete-form}
\end{gather}
where $\left(\cdot,\cdot\right)$ denotes the inner product, $\mathcal{F}^{c\dagger}$
is the inviscid flux function, $\average{\cdot}$ is the average operator,
$\left\llbracket \cdot\right\rrbracket $ is the jump operator, and
$\delta^{v}$ is a viscous-flux penalty term that depends on the viscous
flux function. Note that Equation~(\ref{eq:semi-discrete-form})
corresponds to a primal formulation~\citep{Arn02,Har13}; in~\citep{Zha17},
a flux formulation is used. It is worth mentioning that the penalty
term for many conventional viscous flux functions is not a function
of the gradient, i.e., $\delta^{v}=\delta^{v}(y,n)$; however, as
will be seen in Section~\ref{subsec:Positivity-preserving-Lax-Friedrichs},
the penalty term for the proposed Lax-Friedrichs-type viscous flux
function indeed depends on the gradient. In this work, we employ the
HLLC inviscid numerical flux~\citep{Tor13}. To compute $\delta^{v}$,
we consider the BR2 scheme~\citep{Bas00} and the proposed Lax-Friedrichs-type
flux function. The jump operator, average operator, inviscid flux
function, and penalty term are defined as
\begin{align*}
\left\llbracket v\right\rrbracket =v^{+}-v^{-} & \textup{ on }\epsilon\qquad\forall\epsilon\in\mathcal{E_{I}},\\
\average y=\frac{1}{2}\left(y^{+}+y^{-}\right) & \textup{ on }\epsilon\qquad\forall\epsilon\in\mathcal{E_{I}},\\
\average{\mathcal{F}^{\nu}\left(y,\nabla y\right)}=\frac{1}{2}\left(\mathcal{F}^{\nu}\left(y^{+},\nabla y^{+}\right)+\mathcal{F}^{\nu}\left(y^{-},\nabla y^{-}\right)\right) & \textup{ on }\epsilon\qquad\forall\epsilon\in\mathcal{E_{I}},\\
\mathcal{F}^{c\dagger}\left(y,n\right)=\mathcal{F}^{c\dagger}\left(y^{+},y^{-},n\right) & \textup{ on }\epsilon\qquad\forall\epsilon\in\mathcal{E_{I}},\\
\delta^{v}\left(y,\nabla y,n\right)=\delta^{v}\left(y^{+},y^{-},\nabla y^{+},\nabla y^{-},n\right) & \textup{ on }\epsilon\qquad\forall\epsilon\in\mathcal{E_{I}},
\end{align*}
at interior interfaces and

\begin{align*}
\left\llbracket v\right\rrbracket =v^{+} & \textup{ on }\epsilon\qquad\forall\epsilon\in\mathcal{E}_{\partial},\\
\average y=y_{\partial}\left(y^{+},n^{+}\right) & \textup{ on }\epsilon\qquad\forall\epsilon\in\mathcal{E}_{\partial},\\
\average{\mathcal{F}^{\nu}\left(y,\nabla y\right)}=\mathcal{F}_{\partial}^{\nu}\left(y_{\partial}\left(y^{+},n^{+}\right),\nabla y^{+}\right) & \textup{ on }\epsilon\qquad\forall\epsilon\in\mathcal{E}_{\partial},\\
\mathcal{F}^{c\dagger}\left(y,n\right)=\mathcal{F}_{\partial}^{c\dagger}\left(y^{+},n^{+}\right) & \textup{ on }\epsilon\qquad\forall\epsilon\in\mathcal{E}_{\partial},\\
\delta^{v}\left(y,\nabla y,n\right)=\delta_{\partial}^{v}\left(y^{+},y_{\partial}\left(y^{+},n^{+}\right),\nabla y^{+},n^{+}\right) & \textup{ on }\epsilon\qquad\forall\epsilon\in\mathcal{E}_{\partial},
\end{align*}
at boundary interfaces, where $y_{\partial}\left(y^{+},n^{+}\right)$
is the boundary state, $\mathcal{F}_{\partial}^{c\dagger}\left(y^{+},n^{+}\right)$
is the inviscid boundary flux function, $\mathcal{F}_{\partial}^{\nu}\left(y_{\partial}\left(y^{+}\right),\nabla y^{+},n^{+}\right)$
is the viscous boundary flux, and $\delta_{\partial}^{v}\left(y^{+},y_{\partial}\left(y^{+},n^{+}\right),\nabla y^{+},n^{+}\right)$
is the boundary penalty term. \ref{sec:boundary-conditions} provides
a discussion of the prescription of various boundary conditions.

Strang splitting~\citep{Str68} is applied to decouple the temporal
integration of the transport operators from that of the stiff chemical
source term over a given interval $(t_{0},t_{0}+\Delta t]$ as
\begin{align}
\frac{\partial y}{\partial t}+\nabla\cdot\mathcal{F}\left(y\right)=0 & \textup{ in }\Omega\times\left(t_{0},t_{0}+\nicefrac{\Delta t}{2}\right],\label{eq:strang-splitting-1}\\
\frac{\partial y}{\partial t}-\mathcal{S}\left(y\right)=0 & \textup{ in }\left(t_{0},t_{0}+\Delta t\right],\label{eq:strang-splitting-2}\\
\frac{\partial y}{\partial t}+\nabla\cdot\mathcal{F}\left(y\right)=0 & \textup{ in }\Omega\times\left(t_{0}+\nicefrac{\Delta t}{2},t_{0}+\Delta t\right].\label{eq:strang-splitting-3}
\end{align}
Equations~(\ref{eq:strang-splitting-1}) and~(\ref{eq:strang-splitting-3})
are advanced in time using a strong-stability-preserving Runge-Kutta
method (SSPRK)~\citep{Got01,Spi02}, whereas Equation~(\ref{eq:strang-splitting-2})
is solved using a fully implicit, temporal DG discretization. Since
the reaction step is identical between the inviscid and viscous cases,
we refer the reader to~\citep{Chi22} and~\citep{Chi22_2} for more
details on the DG discretization in time for Equation~(\ref{eq:strang-splitting-2}).
Here, we focus on the transport step.

We assume a nodal basis, such that the local solution approximation
is given by
\begin{equation}
y_{\kappa}=\sum_{j=1}^{n_{b}}y_{\kappa}(x_{j})\phi_{j},\label{eq:solution-approximation}
\end{equation}
where $\phi_{j}$ is the $j$th basis function, $n_{b}$ is the number
of basis functions, and $x_{j}$ is the physical coordinate of the
$j$th node. The volume and surface integrals in Equation~(\ref{eq:semi-discrete-form})
are computed using a quadrature-free approach~\citep{Atk96,Atk98}.
Furthermore, the flux can be approximated as
\begin{equation}
\mathcal{F_{\kappa}}\left(y\right)\approx\sum_{k=1}^{n_{c}}\mathcal{F}\left(y_{\kappa}\left(x_{k}\right),\nabla y_{\kappa}\left(x_{k}\right)\right)\varphi_{k},\label{eq:flux-projection}
\end{equation}
where $n_{c}\geq n_{b}$ and $\left\{ \varphi_{1},\ldots,\varphi_{n_{c}}\right\} $
is a set of basis functions that may be different from those in Equation~(\ref{eq:solution-approximation}).
As discussed in~\citep{Joh20_2}, pressure equilibrium is (approximately)
maintained in smooth regions of the flow and at material interfaces
if $n_{c}=n_{b}$ and the integration points are in the solution nodal
set. However, if over-integration is desired (i.e., $n_{c}>n_{b}$),
the standard flux interpolation~(\ref{eq:flux-projection}) results
in the rapid generation of large spurious pressure oscillations. Therefore,
in the case of over-integration, Equation~(\ref{eq:flux-projection})
is replaced with
\begin{equation}
\mathcal{F_{\kappa}}\left(y\right)\approx\sum_{k=1}^{n_{c}}\mathcal{F}_{\kappa}\left(\widetilde{y}_{\kappa}\left(x_{k}\right),\nabla y_{\kappa}\left(x_{k}\right)\right)\varphi_{k}=\sum_{k=1}^{n_{c}}\mathcal{F}_{\kappa}^{c}\left(\widetilde{y}_{\kappa}\left(x_{k}\right)\right)\varphi_{k}-\sum_{k=1}^{n_{c}}\mathcal{F}_{\kappa}^{v}\left(\widetilde{y}_{\kappa}\left(x_{k}\right),\nabla y_{\kappa}\left(x_{k}\right)\right)\varphi_{k},\label{eq:modified-flux-projection}
\end{equation}
where
\[
\mathcal{F}_{\kappa}^{v}\left(\widetilde{y}_{\kappa}\left(x_{k}\right),\nabla y_{\kappa}\left(x_{k}\right)\right)=G\left(\widetilde{y}_{\kappa}\left(x_{k}\right)\right):\nabla y_{\kappa}\left(x_{k}\right)
\]
and $\widetilde{y}$ is a modified state given by
\begin{equation}
\widetilde{y}\left(y,\widetilde{P}\right)=\left(\rho v_{1},\ldots,\rho v_{d},\widetilde{\rho u}\left(C_{1},\ldots,C_{n_{s}},\widetilde{P}\right)+\frac{1}{2}\sum_{k=1}^{d}\rho v_{k}v_{k},C_{1},\ldots,C_{n_{s}}\right)^{T}.\label{eq:interpolated-state-modified}
\end{equation}
$\widetilde{P}$ in Equation~(\ref{eq:interpolated-state-modified})
is a polynomial in $\mathcal{P}_{p}(\kappa)$ that approximates the
pressure as

\[
\widetilde{P}_{\kappa}=\sum_{j=1}^{n_{b}}P\left(y_{\kappa}\left(x_{j}\right)\right)\phi_{j},
\]
from which the modified internal energy, $\widetilde{\rho u}$, is
calculated. Furthermore, in Equation~(\ref{eq:semi-discrete-form}),
$\delta^{v}\left(y,\nabla y,n\right)$ and $G\left(y^{+}\right):\left(\average y-y^{+}\right)\otimes n$
are replaced with $\delta^{v}\left(\widetilde{y},\nabla y,n\right)$
and $G\left(\widetilde{y}^{+}\right):\left(\average{\widetilde{y}}-\widetilde{y}^{+}\right)\otimes n$,
respectively.

Since the linear-scaling limiter used to enforce the positivity property
and entropy boundedness does not completely eliminate small-scale
nonphysical artifacts~\citep{Zha10,Jia18,Lv15_2,Wu21_2}, especially
near non-smooth features, we add the artificial dissipation term~\citep{Har13}
\begin{equation}
-\sum_{\kappa\in\mathcal{T}}\left(\nu_{\mathrm{AV}}\nabla y,\nabla\mathfrak{v}\right)_{\kappa}\label{eq:artificial-viscosity-integral}
\end{equation}
to the LHS of Equation~(\ref{eq:semi-discrete-form}), where $\nu_{\mathrm{AV}}$
is the artificial viscosity, calculated as~\citep{Joh20_2}
\[
\nu_{\mathrm{AV}}=\left(C_{\mathrm{AV}}+S_{\mathrm{AV}}\right)\left(\frac{h^{2}}{p+1}\left|\frac{\partial T}{\partial y}\cdot\frac{\mathcal{R}\left(y,\nabla y\right)}{T}\right|\right).
\]
$S_{\mathrm{AV}}$ is a shock sensor based on solution variations
inside a given element~\citep{Chi19}, $C_{\mathrm{AV}}$ is a user-defined
parameter, $h$ is a length scale associated with the element, and
$\mathcal{R}\left(y,\nabla y\right)$ is the strong form of the residual~(\ref{eq:conservation-law-strong-form}).
In our previous work, this artificial viscosity formulation effectively
mitigated small-scale nonlinear instabilities in various multicomponent-flow
problems~\citep{Joh20_2,Chi22}. However, other types of artificial
viscosity or limiters can be employed instead. Additional details
on the basic DG discretization and the issue of pressure equilibrium
preservation can be found in~\citep{Joh20_2}.

\section{Transport step: Positivity-preserving, entropy-bounded discontinuous
Galerkin method\label{sec:positivity-preserving-entropy-bounded-DG}}

Let $\mathcal{G}_{\sigma}$ denote the following set:
\begin{equation}
\mathcal{G}_{\sigma}=\left\{ y\mid\rho>0,\rho u^{*}>0,C_{1}\geq0,\ldots,C_{n_{s}}\geq0,\chi_{\sigma}\geq0\right\} ,
\end{equation}
where $\sigma\in\mathbb{R}$, $\chi_{\sigma}=\rho s-\rho\sigma$,
and $u^{*}$ is the ``shifted'' internal energy~\citep{Hua19},
calculated as
\begin{equation}
u^{*}=u-u_{0},\quad u_{0}=\left.u\right|_{T=0},\label{eq:shifted-internal-energy}
\end{equation}
such that $u^{*}>0$ if and only if $T>0$, provided $c_{v,i}>0,\:i=1,\ldots,n_{s}$~\citep{Gio99}.
Note that $\rho>0$ and $u^{*}>0$ imply $P(y)>0$. The $\chi_{\sigma}>0$
inequality is associated with entropy boundedness, which will be discussed
in more detail later in this section. Let $\mathcal{G}$ denote a
similar set, but without the entropy constraint, i.e.,

\begin{equation}
\mathcal{G}=\left\{ y\mid\rho>0,\rho u^{*}>0,C_{1}\geq0,\ldots,C_{n_{s}}\geq0\right\} .
\end{equation}
Since $\rho u^{*}(y)$ is a concave function of the state~\citep{Chi22},
$\mathcal{G}$ is a convex set. If all species concentrations are
strictly positive, then for a given $\sigma$, $\chi_{\sigma}$ is
concave~\citep{Jia18,Chi22,Gou20} and $\mathcal{G}_{\sigma}$ is
also a convex set. However, if any of the species concentrations is
zero, then $\chi_{\sigma}$ is no longer concave~\citep{Gou20,Gou20_2}.
For the remainder of this paper, in any discussion of entropy, $\mathcal{G}_{\sigma}$
is always assumed to be a convex set. Note that this assumption does
not seem to have any discernible negative effects on the solver~\citep{Chi22,Chi22_2}.
In addition, as will be made clear in the upcoming subsection and
Remark~\ref{rem:zero-species-concentrations}, positive species concentrations
are assumed until Section~\ref{subsec:zero-species-concentrations},
wherein this restriction is relaxed to allow for consideration of
zero concentrations.

\subsection{Positivity-preserving Lax-Friedrichs-type viscous flux function\label{subsec:Positivity-preserving-Lax-Friedrichs}}

In this subsection, we extend the local Lax-Friedrichs-type viscous
flux function by Zhang~\citep{Zha17} to multicomponent flows with
species diffusion. In particular, we consider the viscous flux separately
from the inviscid flux, unlike Du and Yang~\citep{Du22}, who adapted
said flux function to multicomponent flows in a manner that treats
both fluxes simultaneously. Unless otherwise specified, we assume
this flux function is employed for the remainder of the section. The
penalty term takes the form~\citep{Zha17}
\[
\delta^{v}(y^{+},y^{-},\nabla y^{+},\nabla y^{-},n)=\frac{\beta}{2}\left(y^{+}-y^{-}\right),
\]
where $\beta>0$ is the dissipation coefficient. The lemma below introduces
a constraint on the definition of $\beta$ that is essential for satisfaction
of the positivity property by the DG formulation, as will be discussed
in Section~\ref{subsec:1D-PPDG-high-order}. In Section~\ref{subsec:thermal-bubble},
we demonstrate that this viscous flux function achieves optimal convergence
for smooth flows. For compatibility with boundary conditions and the
aforementioned techniques to reduce spurious pressure oscillations,
the definition of $\beta$ is first presented in terms of the following
expansion of the viscous flux:
\[
\mathcal{F}^{v}=\left(\mathcal{F}_{\rho v}^{v},\mathcal{F}_{\rho e_{t}}^{v},\mathcal{F}_{C_{1}}^{v},\ldots,\mathcal{F}_{C_{n_{s}}}^{v}\right)^{T},
\]
where $\mathcal{F}_{\rho v}^{v}$ is the viscous momentum flux, $\mathcal{F}_{\rho e_{t}}^{v}$
is the viscous total-energy flux, and $\mathcal{F}_{C_{i}}^{v}$ is
the viscous molar flux of the $i$th species. Furthermore, until Section~\ref{subsec:zero-species-concentrations},
we assume that all species concentrations are strictly positive, unless
otherwise specified.
\begin{lem}
\label{lem:beta-constraints}Assume that $y=\left(\rho v,\rho e_{t},C\right)^{T}$
is in $\mathcal{G}$ and that $C_{i}>0,\:\forall i$. Then $y\pm\beta^{-1}\mathcal{F}^{v}\cdot n$,
where $n$ is a given unit vector, is also in $\mathcal{G}$ under
the following conditions:

\begin{equation}
\beta>\beta^{*}\left(y,\mathcal{F}^{v},n\right)=\left.\max\left\{ \max_{i=1,\ldots,n_{s}}\frac{\left|\mathcal{F}_{C_{i}}^{v}\cdot n\right|}{C_{i}},\beta_{T}\right\} \right|_{\left(y,\mathcal{F}^{v},n\right)},\label{eq:beta-constraint}
\end{equation}
where
\begin{equation}
\beta_{T}=\frac{\left|b\right|+\sqrt{b^{2}+2\rho^{2}u^{*}\left|\mathcal{F}_{\rho v}^{v}\cdot n\right|^{2}}}{2\rho^{2}u^{*}},\label{eq:beta_T}
\end{equation}
with $b=\rho\mathcal{F}_{\rho e_{t}}^{v}\cdot n-\rho v\cdot\mathcal{F}_{\rho v}^{v}\cdot n$.
\end{lem}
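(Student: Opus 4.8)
The plan is to fix the unit vector $n$, set $z^{\pm}=y\pm\beta^{-1}\mathcal{F}^{v}\cdot n$, and verify the defining inequalities of $\mathcal{G}$ for $z^{\pm}$ one at a time, each inequality producing one of the two lower bounds on $\beta$ that are combined in $\beta^{*}$. Throughout I will use that the hypotheses give $\rho>0$, $\rho u^{*}>0$, and $C_{i}>0$ for $y$, and I will keep the viscous fluxes in the exact continuous form so that structural identities (in particular the mass-conservation correction on the diffusion velocities) hold pointwise.

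I would dispatch the species concentrations first, since this is immediate: the $i$th concentration of $z^{\pm}$ is $C_{i}\pm\beta^{-1}\,\mathcal{F}_{C_{i}}^{v}\cdot n$, which is nonnegative for both signs exactly when $\beta\geq|\mathcal{F}_{C_{i}}^{v}\cdot n|/C_{i}$; since $\beta>\beta^{*}\geq\max_{i}|\mathcal{F}_{C_{i}}^{v}\cdot n|/C_{i}$, all concentrations of $z^{\pm}$ are in fact strictly positive. For the density, I would use that $\mathcal{F}_{C_{i}}^{v}$ has $k$th spatial component $C_{i}V_{ik}$ together with the correction $\sum_{i}W_{i}C_{i}V_{ik}=0$, which gives $\sum_{i}W_{i}\,\mathcal{F}_{C_{i}}^{v}\cdot n=0$; hence the density of $z^{\pm}$ equals $\sum_{i}W_{i}\bigl(C_{i}\pm\beta^{-1}\mathcal{F}_{C_{i}}^{v}\cdot n\bigr)=\rho>0$. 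The key consequence is that the perturbation leaves the density unchanged, which is what makes the last step clean.

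The substantive step is the shifted internal energy, which together with $\rho>0$ yields $P>0$. Writing $\rho u^{*}=\rho e_{t}-\tfrac{1}{2\rho}|\rho v|^{2}-\rho u_{0}$, with $\rho u_{0}=\sum_{i}W_{i}C_{i}\,u_{0,i}$ an affine function of the concentrations, and using that the density of $z^{\pm}$ is still $\rho$, I would substitute the perturbed momentum $\rho v\pm\beta^{-1}\mathcal{F}_{\rho v}^{v}\cdot n$, the perturbed energy $\rho e_{t}\pm\beta^{-1}\mathcal{F}_{\rho e_{t}}^{v}\cdot n$, and the perturbed concentrations, expand the kinetic term, and collect powers of $\beta^{-1}$. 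This produces $\rho u^{*}(z^{\pm})=\rho u^{*}(y)\pm\beta^{-1}\rho^{-1}b-\beta^{-2}\,\tfrac{|\mathcal{F}_{\rho v}^{v}\cdot n|^{2}}{2\rho}$, with $b$ the first-order coefficient identified in the statement. The worst case over the two signs replaces $\pm\rho^{-1}b$ by $-\rho^{-1}|b|$, so it suffices that $\rho u^{*}(y)-\beta^{-1}\rho^{-1}|b|-\beta^{-2}\tfrac{|\mathcal{F}_{\rho v}^{v}\cdot n|^{2}}{2\rho}>0$; multiplying through by $\beta^{2}>0$ turns this into a quadratic in $\beta$ with positive leading coefficient $\rho u^{*}(y)>0$, nonnegative discriminant, and one nonpositive and one nonnegative root, whose larger root simplifies (clear one factor of $\rho$ and use $\rho\cdot\rho u^{*}=\rho^{2}u^{*}$) to exactly $\beta_{T}$ of Equation~(\ref{eq:beta_T}). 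Hence $\beta>\beta_{T}$ forces $\rho u^{*}(z^{\pm})>0$, and taking $\beta>\beta^{*}=\max\{\max_{i}|\mathcal{F}_{C_{i}}^{v}\cdot n|/C_{i},\,\beta_{T}\}$ secures all conditions at once, i.e.\ $z^{\pm}\in\mathcal{G}$.

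The main obstacle is the energy estimate, not the other two items. Two points require care: first, correctly accounting for the three ways the perturbation $\pm\beta^{-1}\mathcal{F}^{v}\cdot n$ moves $\rho u^{*}$ — directly through $\rho e_{t}$, quadratically through the kinetic energy via the momentum, and, unlike in the monocomponent analysis of Zhang, through the composition-dependent zero-point internal energy $\rho u_{0}$ — and bundling the resulting first-order contributions into the single scalar $b$; and second, the reduction of the two sign choices to one scalar quadratic inequality in $\beta$ and the algebraic verification that its threshold is precisely $\beta_{T}$. Everything else follows directly from the component structure of $\mathcal{F}^{v}$ and the mass-conservation correction on the diffusion velocities.
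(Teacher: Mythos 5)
Your proof is correct and follows essentially the same route as the paper's: positivity of each perturbed concentration yields the first bound on $\beta$, the mass-conservation identity $\sum_{i}W_{i}\mathcal{F}_{C_{i}}^{v}\cdot n=0$ handles the density, and positivity of the shifted internal energy reduces to a convex quadratic in $\beta$ whose largest root is exactly $\beta_{T}$. The only cosmetic differences are that you work with $\rho u^{*}$ rather than $Z=\rho^{2}u^{*}$ (legitimate, since the density is invariant under the perturbation) and collapse the two sign choices into a single quadratic via $\left|b\right|$, whereas the paper retains both quadratics and takes the largest of all their roots.
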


\begin{proof}
$y\pm\beta^{-1}\mathcal{F}^{v}\cdot n$ can be expanded as
\[
\begin{split}y\pm\beta^{-1}\mathcal{F}^{v}\cdot n= & \left(\rho v\pm\beta^{-1}\mathcal{F}_{\rho v}^{v}\cdot n,\rho e_{t}\pm\beta^{-1}\mathcal{F}_{\rho e_{t}}^{v}\cdot n,C_{1}\pm\beta^{-1}\mathcal{F}_{C_{1}}^{v}\cdot n,\ldots,C_{n_{s}}\pm\beta^{-1}\mathcal{F}_{C_{n_{s}}}^{v}\cdot n\right)^{T}.\end{split}
\]
First, we focus on positivity of density and species concentrations.
For the $i$th species, $C_{i}\pm\beta^{-1}\mathcal{F}_{C_{i}}^{v}\cdot n>0$
if and only if $\beta>\left|\mathcal{F}_{C_{i}}^{v}\cdot n\right|/C_{i}$.
Accounting for all species yields
\begin{equation}
\beta>\max_{i=1,\ldots,n_{s}}\frac{\left|\mathcal{F}_{C_{i}}^{v}\cdot n\right|}{C_{i}}.\label{eq:beta-constraint-concentration}
\end{equation}
Density is then also positive.

Next, we focus on positivity of temperature. For a given $y=\left(\rho v,\rho e_{t},C\right)^{T}$,
let $Z(y)$ be defined as
\begin{equation}
Z(y)=\rho^{2}u^{*}(y)=\rho(y)\rho e_{t}-\left|\rho v\right|^{2}/2-\rho^{2}u_{0}(y).\label{eq:Z-definition}
\end{equation}
Note that $Z(y)>0$ implies $T(y)>0$. $Z\left(y\pm\beta^{-1}\mathcal{F}^{v}\cdot n\right)$
can be expressed as
\begin{align*}
Z\left(y\pm\beta^{-1}\mathcal{F}^{v}\cdot n\right)= & \sum_{i=1}^{n_{s}}W_{i}\left(C_{i}\pm\beta^{-1}\mathcal{F}_{C_{i}}^{v}\cdot n\right)\left(\rho e_{t}\pm\beta^{-1}\mathcal{F}_{\rho e_{t}}^{v}\cdot n\right)\\
 & -\frac{1}{2}\left|\rho v\pm\beta^{-1}\mathcal{F}_{\rho v}^{v}\cdot n\right|^{2}-\left[\sum_{i=1}^{n_{s}}W_{i}\left(C_{i}\pm\beta^{-1}\mathcal{F}_{C_{i}}^{v}\cdot n\right)\right]^{2}u_{0},
\end{align*}
which, after multiplying both sides by $\beta^{2}$ and some algebraic
manipulation, can be rewritten as
\begin{align}
\beta^{2}Z\left(y\pm\beta^{-1}\mathcal{F}^{v}\cdot n\right)= & \rho^{2}u^{*}\beta^{2}\pm b\beta+g,\label{eq:beta-quadratic-form}
\end{align}
where $b=\rho e_{t}M+\rho\mathcal{F}_{\rho e_{t}}^{v}\cdot n-\rho v\cdot\mathcal{F}_{\rho v}^{v}\cdot n-2\rho u_{0}M$,
$g=M\mathcal{F}_{\rho e_{t}}^{v}\cdot n-\frac{1}{2}\left|\mathcal{F}_{\rho v}^{v}\cdot n\right|^{2}-u_{0}M^{2}$,
and $M=\sum_{i=1}^{n_{s}}W_{i}\mathcal{F}_{C_{i}}^{v}\cdot n$. By
mass conservation, $M=0$, such that $b=\rho\mathcal{F}_{\rho e_{t}}^{v}\cdot n-\rho v\cdot\mathcal{F}_{\rho v}^{v}\cdot n$
and $g=-\frac{1}{2}\left|\mathcal{F}_{\rho v}^{v}\cdot n\right|^{2}$.
Setting the RHS of Equation~(\ref{eq:beta-quadratic-form}) equal
to zero yields two quadratic equations with $\beta$ as the unknowns.
Since $\rho^{2}u^{*}$ is positive, the two quadratic equations are
convex. Furthermore, since $b^{2}+2\rho^{2}u^{*}\left|\mathcal{F}_{\rho v}^{v}\cdot n\right|^{2}\geq0$,
for each of the two quadratic equations, the roots are real and at
least one is nonnegative. A sufficient condition to ensure $Z\left(y\pm\beta^{-1}\mathcal{F}^{v}\cdot n\right)>0$
is $\beta>\beta_{T}\geq0$, where $\beta_{T}$, given by Equation~(\ref{eq:beta_T}),
is the largest of all roots of the quadratic equations. Combining
this with the inequality~(\ref{eq:beta-constraint-concentration})
yields~(\ref{eq:beta-constraint}).
\end{proof}
\begin{rem}
\label{rem:zero-species-concentrations}Lemma~\ref{lem:beta-constraints}
and the inequality~(\ref{eq:beta-constraint-concentration}) assume
that the species concentrations are positive. If $C_{i}=0$ and $\mathcal{F}_{C_{i}}^{v}\cdot n\neq0$,
then there exists no finite value of $\beta$ such that $C_{i}\pm\beta^{-1}\mathcal{F}_{C_{i}}^{v}\cdot n\geq0$
since $\mathcal{F}_{C_{i}}^{v}$ is not directly proportional to $C_{i}$.
Specifically, the $k$th spatial component of $\mathcal{F}_{C_{i}}^{v}(y,\nabla y)$
can be written as
\begin{align}
\mathcal{F}_{C_{i},k}^{v}(y,\nabla y) & =C_{i}V_{ik}\nonumber \\
 & =C_{i}\hat{V}_{ik}-\frac{C_{i}\sum_{l=1}^{n_{s}}W_{l}C_{l}\hat{V}_{lk}}{\rho}\nonumber \\
 & =\bar{D}_{i}\frac{\partial C_{i}}{\partial x_{k}}-\frac{C_{i}\bar{D}_{i}}{\rho}\frac{\partial\rho}{\partial x_{k}}-\frac{C_{i}}{\rho}\sum_{l=1}^{n_{s}}W_{l}\left(\bar{D}_{l}\frac{\partial C_{l}}{\partial x_{k}}-\frac{C_{l}\bar{D}_{l}}{\rho}\frac{\partial\rho}{\partial x_{k}}\right),\label{eq:molar_flux_definition}
\end{align}
such that $\mathcal{F}_{C_{i},k}^{v}$ can be nonzero even if $C_{i}=0$.
As previously mentioned, however, it is crucial to account for zero
concentrations. In Section~\ref{subsec:zero-species-concentrations},
we relax this restriction and discuss how to ensure nonnegative species
concentrations, which is done in a different manner from how positive
density and temperature are guaranteed.
\end{rem}

\begin{rem}
\label{rem:beta-not-abstract-form}The constraint on $\beta$ in~(\ref{eq:beta-constraint})
is left in abstract form, i.e., in terms of $\mathcal{F}^{v}=\left(\mathcal{F}_{\rho v}^{v},\mathcal{F}_{\rho e_{t}}^{v},\mathcal{F}_{C_{1}}^{v},\ldots,\mathcal{F}_{C_{n_{s}}}^{v}\right)^{T}$.
This is to allow for consideration of, for example, $\widetilde{y}\pm\mathcal{F}^{v}\left(\widetilde{y},\nabla y\right)\cdot n$,
where $y\neq\widetilde{y}$, which is necessary for the modified flux
interpolation~(\ref{eq:modified-flux-projection}) and for boundary
conditions. If we take $\mathcal{F}^{v}=\mathcal{F}^{v}\left(y,\nabla y\right)$
and substitute the definitions of each component of $\mathcal{F}^{v}$,
the constraint on $\beta$ reduces to

\begin{equation}
\beta>\max\left\{ \max_{i=1,\ldots,n_{s}}\left|V_{i}\cdot n\right|,\beta_{T}\right\} ,\label{eq:beta-constraint-reduced}
\end{equation}
where Equation~(\ref{eq:beta_T}) is now given by

\[
\beta_{T}=\frac{\left|b\right|+\sqrt{b^{2}+2\rho^{2}u^{*}\left|\tau\cdot n\right|^{2}}}{2\rho^{2}u^{*}},
\]
with
\[
b=\rho q\cdot n+\rho\sum_{i=1}^{n_{s}}W_{i}C_{i}h_{i}V_{i}\cdot n.
\]
Related to Remark~\ref{rem:zero-species-concentrations}, $V_{i}$
in~(\ref{eq:beta-constraint-reduced}) can blow up as $C_{i}\rightarrow0$
(unless $\nabla C_{i}=0$). If species diffusion is neglected, these
expressions recover those in~\citep{Zha17} for the monocomponent
case.
\end{rem}

\begin{rem}
$\sum_{i}W_{i}\left[C_{i}\pm\beta^{-1}\mathcal{F}_{C_{i}}^{v}(C,\nabla C)\cdot n\right]$
recovers $\rho$ since
\begin{align*}
\sum_{i=1}^{n_{s}}W_{i}\left[C_{i}\pm\beta^{-1}\mathcal{F}_{C_{i}}^{v}(C,\nabla C)\cdot n\right] & =\sum_{i=1}^{n_{s}}W_{i}C_{i}\pm\beta^{-1}\sum_{i=1}^{n_{s}}W_{i}C_{i}V_{i}\cdot n\\
 & =\sum_{i=1}^{n_{s}}W_{i}C_{i}\\
 & =\rho,
\end{align*}
where the second line is due to mass conservation, i.e., $\sum_{i=1}^{n_{s}}W_{i}C_{i}V_{ik}=0,\;k=1,\ldots,d$.
\end{rem}

\begin{rem}
Combining the convective and diffusive fluxes into a single flux,
as done by Zhang~\citep{Zha17} and Du and Yang~\citep{Du22}, results
in a different constraint on $\beta$. As discussed in Section~\ref{sec:Introduction},
in this work, we elect to use the HLLC inviscid flux function since
in our experience, it typically produces more accurate solutions than
the Lax-Friedrichs inviscid flux function. As such, the inviscid and
viscous fluxes are treated separately in our formulation.
\end{rem}

\subsection{One-dimensional case~\label{subsec:1D-PPDG}}

In this subsection, we consider the one-dimensional case. We first
focus on $p=0$ before proceeding to $p\geq1$. Without loss of generality,
we assume a uniform grid with element size $h$.

\subsubsection{First-order DG scheme in one dimension\label{subsec:1D-PPDG-first-order}}

Consider the following $p=0$, element-local DG discretization with
forward Euler time stepping:
\begin{equation}
\begin{aligned}y_{\kappa}^{j+1}= & y_{\kappa}^{j}-\frac{\Delta t}{h}\left[\mathcal{F}^{c\dagger}\left(y_{\kappa}^{j},y_{\kappa_{L}}^{j},-1\right)+\mathcal{F}^{c\dagger}\left(y_{\kappa}^{j},y_{\kappa_{R}}^{j},1\right)\right]\\
 & +\frac{\Delta t}{h}\left[-\mathcal{F}^{v}\left(y_{\kappa_{L}}^{j},\nabla y_{\kappa_{L}}^{j}\right)+\mathcal{F}^{v}\left(y_{\kappa_{R}}^{j},\nabla y_{\kappa_{R}}^{j}\right)-\delta^{v}\left(y_{\kappa}^{j},y_{\kappa_{L}}^{j},\nabla y_{\kappa}^{j},\nabla y_{\kappa_{L}}^{j},-1\right)-\delta^{v}\left(y_{\kappa}^{j},y_{\kappa_{R}}^{j},\nabla y_{\kappa}^{j},\nabla y_{\kappa_{R}}^{j},1\right)\right],
\end{aligned}
\label{eq:p0-DG-1D}
\end{equation}
where $\Delta t$ is the time step size, $j$ is the time step index,
and $\kappa_{L}$ and $\kappa_{R}$ are the elements to the left and
right of $\kappa$, respectively. Equation~(\ref{eq:p0-DG-1D}) can
be rearranged to split the convective and diffusive contributions
as~\citep{Zha17}
\begin{align}
y_{\kappa}^{j+1} & =\frac{1}{2}\left(y_{\kappa,c}^{j+1}+y_{\kappa,v}^{j+1}\right),\\
y_{\kappa,c}^{j+1} & =y_{\kappa}^{j}-\frac{\Delta t^{*}}{h}\left[\mathcal{F}^{c\dagger}\left(y_{\kappa}^{j},y_{\kappa_{L}}^{j},-1\right)+\mathcal{F}^{c\dagger}\left(y_{\kappa}^{j},y_{\kappa_{R}}^{j},1\right)\right],\\
y_{\kappa,v}^{j+1} & =y_{\kappa}^{j}+\frac{\Delta t^{*}}{h}\left[-\frac{1}{2}\mathcal{F}^{v}\left(y_{\kappa_{L}}^{j},\nabla y_{\kappa_{L}}^{j}\right)+\frac{1}{2}\mathcal{F}^{v}\left(y_{\kappa_{R}}^{j},\nabla y_{\kappa_{R}}^{j}\right)-\delta^{v}\left(y_{\kappa}^{j},y_{\kappa_{L}}^{j},\nabla y_{\kappa}^{j},\nabla y_{\kappa_{L}}^{j},-1\right)-\delta^{v}\left(y_{\kappa}^{j},y_{\kappa_{R}}^{j},\nabla y_{\kappa}^{j},\nabla y_{\kappa_{R}}^{j},1\right)\right],\label{eq:p0-1D-viscous-contribution}
\end{align}
where $\Delta t^{*}=2\Delta t$.

First taking into account the convective contribution, let $\lambda$
be an upper bound on the maximum wave speed of the system. $y_{\kappa}^{j},y_{\kappa_{L}}^{j},y_{\kappa_{R}}^{j}\in\mathcal{G}_{\sigma}$
implies $y_{\kappa,c}^{j+1}\in\mathcal{G}_{\sigma}$ if an \emph{invariant-region-preserving}
flux function~\citep{Jia18} is employed and the time step size satisfies
\begin{equation}
\frac{\Delta t^{*}\lambda}{h}\leq\frac{1}{2}.\label{eq:p0-time-step-constraint-inviscid}
\end{equation}
The Godunov, Lax-Friedrichs, HLL, and HLLC inviscid flux functions
are invariant-region-preserving~\citep{Jia18}. Since the focus of
this paper is the diffusive contribution, we refer the reader to~\citep{Chi22}
and the references therein for additional information on the convective
contribution.

For $p=0$, $\mathcal{F}^{v}\left(y_{\kappa},\nabla y_{\kappa}\right)=G(y_{\kappa}):\nabla y_{\kappa}=0$
since $\nabla y_{\kappa}=0$. As such, Equation~(\ref{eq:p0-1D-viscous-contribution})
reduces to
\begin{align}
y_{\kappa,v}^{j+1} & =y_{\kappa}^{j}+\frac{\Delta t^{*}}{h}\left[-\delta^{v}\left(y_{\kappa}^{j},y_{\kappa_{L}}^{j},\nabla y_{\kappa}^{j},\nabla y_{\kappa_{L}}^{j},-1\right)-\delta^{v}\left(y_{\kappa}^{j},y_{\kappa_{R}}^{j},\nabla y_{\kappa}^{j},\nabla y_{\kappa_{R}}^{j},1\right)\right]\nonumber \\
 & =\left[1-\frac{\Delta t^{*}}{2h}\left(\beta_{\kappa_{L}}+\beta_{\kappa_{R}}\right)\right]y_{\kappa}^{j}+\frac{\Delta t^{*}}{2h}\beta_{\kappa_{L}}y_{\kappa_{L}}^{j}+\frac{\Delta t^{*}}{2h}\beta_{\kappa_{R}}y_{\kappa_{R}}^{j}.\label{eq:p0-1D-viscous-contribution-convex-combination}
\end{align}
Under the time-step-size constraint
\[
\frac{\Delta t^{*}}{2h}\max_{\kappa}\left(\beta_{\kappa_{L}}+\beta_{\kappa_{R}}\right)\leq1,
\]
the RHS of Equation~(\ref{eq:p0-1D-viscous-contribution-convex-combination})
is a convex combination of $y_{\kappa}^{j}$, $y_{\kappa_{L}}^{j}$,
and $y_{\kappa_{R}}^{j}$ for any $\kappa$. $y_{\kappa}^{j},y_{\kappa_{L}}^{j},y_{\kappa_{R}}^{j}\in\mathcal{G}$
then implies $y_{\kappa,v}^{j+1}\in\mathcal{G}$. This holds even
for zero species concentrations. Finally, since $y_{\kappa}^{j+1}$
is a convex combination of $y_{\kappa,c}^{j+1}$ and $y_{\kappa,v}^{j+1}$,
$y_{\kappa}^{j},y_{\kappa_{L}}^{j},y_{\kappa_{R}}^{j}\in\mathcal{G}$
implies $y_{\kappa}^{j+1}\in\mathcal{G}$. Note that in principle,
this holds for any positive values of $\beta_{\kappa_{L}}$ and $\beta_{\kappa_{R}}$.

\subsubsection{High-order DG scheme in one dimension\label{subsec:1D-PPDG-high-order}}

Consider a quadrature rule with $n_{q}$ points and positive weights
denoted with $x_{q}$ and $w_{q}$, respectively, such that $x_{q}\in\kappa=\left[x_{L},x_{R}\right]$
, $\sum_{q=1}^{n_{q}}w_{q}=1$, and $n_{q}\geq n_{b}$, The endpoints,
$x_{L}$ and $x_{R}$, need not be included in the set of quadrature
points, and none of the volumetric integrals in Equation~(\ref{eq:semi-discrete-form})
need to be evaluated with this quadrature rule. The standard flux
interpolation~(\ref{eq:flux-projection}) is assumed here; the modified
flux interpolation~(\ref{eq:modified-flux-projection}) will be accounted
for in Section~\ref{subsec:modified-flux-interpolation-1d}. As in~\citep{Lv15_2}
the element-local solution average can be expanded as
\begin{align}
\overline{y}_{\kappa} & =\sum_{q=1}^{n_{q}}w_{q}y_{\kappa}(x_{q})\nonumber \\
 & =\sum_{q=1}^{n_{q}}\theta_{q}y_{\kappa}(x_{q})+\theta_{L}y_{\kappa}(x_{L})+\theta_{R}y_{\kappa}(x_{R}),\label{eq:element-average-convex-combination}
\end{align}
where, if the set of quadrature points includes the endpoints,
\[
\theta_{q}=\begin{cases}
w_{q} & x_{q}\neq x_{L},x_{q}\neq x_{R}\\
0 & \mathrm{otherwise}
\end{cases}
\]
and
\[
\theta_{L}=w_{L},\quad\theta_{R}=w_{R},
\]
with $w_{L}$ and $w_{R}$ denoting the quadrature weights at the
left and right endpoints, respectively. Otherwise, we take
\[
\theta_{q}=w_{q}-\theta_{L}\psi_{q}\left(x_{L}\right)-\theta_{R}\psi_{q}\left(x_{R}\right),
\]
where $\psi_{1},\ldots,\psi_{n_{d}}$ form a set of Lagrange basis
functions whose nodes are located at $n_{d}$ points of the set $\left\{ x_{q},q=1,\ldots,n_{q}\right\} $,
with $n_{b}\leq n_{d}\leq n_{q}$, and $\psi_{n_{d}+1},\ldots,\psi_{n_{q}}$
are equal to zero. As a result, $\sum_{q=1}^{n_{q}}\theta_{q}y_{\kappa}(x_{q})$
can be written as
\begin{align*}
\sum_{q=1}^{n_{q}}\theta_{q}y_{\kappa}(x_{q}) & =\sum_{q=1}^{n_{q}}\left[w_{q}-\theta_{L}\psi_{q}\left(x_{L}\right)-\theta_{R}\psi_{q}\left(x_{R}\right)\right]y_{\kappa}(x_{q})\\
 & =\sum_{q=1}^{n_{q}}w_{q}y_{\kappa}(x_{q})-\theta_{L}\sum_{q=1}^{n_{q}}y_{\kappa}(x_{q})\psi_{q}\left(x_{L}\right)-\theta_{R}\sum_{q=1}^{n_{q}}y_{\kappa}(x_{q})\psi_{q}\left(x_{R}\right)\\
 & =\sum_{q=1}^{n_{q}}w_{q}y_{\kappa}(x_{q})-\theta_{L}y_{\kappa}(x_{L})+\theta_{R}y_{\kappa}(x_{R}).
\end{align*}
$\theta_{L}$ and $\theta_{R}$ will be related to a time-step-size
constraint below (see~\citep{Lv15_2} and~\citep{Chi22} for additional
details). Note that $\sum_{q}\theta_{q}+\theta_{L}+\theta_{R}=1$
since $\sum_{q=1}^{n_{q}}\psi_{q}=1$. Due to the positivity of the
quadrature weights, there exist positive $\theta_{L}$ and $\theta_{R}$
that yield $\theta_{q}\geq0,\;q=1,\ldots,n_{q}$~\citep{Lv15_2}.
Define $\partial\mathcal{D}_{\kappa}=\left\{ x_{L,}x_{R}\right\} $,
and let $\mathcal{D}_{\kappa}$ denote the following set of points:
\[
\mathcal{D_{\kappa}}=\partial\mathcal{D}_{\kappa}\bigcup\left\{ x_{q},q=1,\ldots,n_{q}\right\} =\left\{ x_{L,}x_{R},x_{q},q=1,\ldots,n_{q}\right\} .
\]

Employing the forward Euler time-integration scheme and taking $\mathfrak{v}\in V_{h}^{0}$
yields the fully discrete scheme satisfied by the element averages,
\[
\overline{y}_{\kappa}^{j+1}=\frac{1}{2}\left(\overline{y}_{\kappa,c}^{j+1}+\overline{y}_{\kappa,v}^{j+1}\right),
\]
where
\begin{equation}
\begin{aligned}\overline{y}_{\kappa,c}^{j+1}= & \overline{y}_{\kappa}^{j}-\frac{\Delta t^{*}}{h}\left[\mathcal{F}^{c\dagger}\left(y_{\kappa}^{j}(x_{L}),y_{\kappa_{L}}^{j}(x_{L}),-1\right)+\mathcal{F}^{c\dagger}\left(y_{\kappa}^{j}(x_{R}),y_{\kappa_{R}}^{j}(x_{R}),1\right)\right]\\
= & \sum_{q=1}^{n_{q}}\theta_{q}y_{\kappa}^{j}(x_{q})+\theta_{L}y_{\kappa}^{j}(x_{L})-\frac{\Delta t^{*}}{h}\left[\mathcal{F}^{c\dagger}\left(y_{\kappa}^{j}(x_{L}),y_{\kappa_{L}}^{j}(x_{L}),-1\right)+\mathcal{F}^{\dagger}\left(y_{\kappa}^{j}(x_{L}),y_{\kappa}^{j}(x_{R}),1\right)\right]\\
 & +\theta_{R}y_{\kappa}^{j}(x_{R})-\frac{\Delta t^{*}}{h}\left[\mathcal{F}^{c\dagger}\left(y_{\kappa}^{j}(x_{R}),y_{\kappa}^{j}(x_{L}),-1\right)+\mathcal{F}^{\dagger}\left(y_{\kappa}^{j}(x_{R}),y_{\kappa_{R}}^{j}(x_{R}),1\right)\right],
\end{aligned}
\label{eq:fully-discrete-form-average-convective-1d}
\end{equation}
and
\begin{equation}
\begin{aligned}\overline{y}_{\kappa,v}^{j+1}= & \overline{y}_{\kappa}^{j}+\frac{\Delta t^{*}}{h}\left[-\frac{1}{2}\mathcal{F}^{v}\left(y_{\kappa}^{j}(x_{L}),\nabla y_{\kappa}^{j}(x_{L})\right)-\frac{1}{2}\mathcal{F}^{v}\left(y_{\kappa_{L}}^{j}(x_{L}),\nabla y_{\kappa_{L}}^{j}(x_{L})\right)\right.\\
 & +\frac{1}{2}\mathcal{F}^{v}\left(y_{\kappa}^{j}(x_{R}),\nabla y_{\kappa}^{j}(x_{R})\right)+\frac{1}{2}\mathcal{F}^{v}\left(y_{\kappa_{R}}^{j}(x_{R}),\nabla y_{\kappa_{R}}^{j}(x_{R})\right)\\
 & \left.-\frac{\beta_{\kappa_{L}}}{2}y_{\kappa}^{j}(x_{L})+\frac{\beta_{\kappa_{L}}}{2}y_{\kappa_{L}}^{j}(x_{L})-\frac{\beta_{\kappa_{R}}}{2}y_{\kappa}^{j}(x_{R})+\frac{\beta_{\kappa_{R}}}{2}y_{\kappa_{R}}^{j}(x_{R})\right]\\
= & \sum_{q=1}^{n_{q}}\theta_{q}y_{\kappa}^{j}(x_{q})+\frac{\Delta t^{*}}{2h}\beta_{\kappa_{L}}\left[y_{\kappa_{L}}^{j}(x_{L})-\beta_{\kappa_{L}}^{-1}\mathcal{F}^{v}\left(y_{\kappa_{L}}^{j}(x_{L}),\nabla y_{\kappa_{L}}^{j}(x_{L})\right)\right]\\
 & +\frac{\Delta t^{*}}{2h}\beta_{\kappa_{R}}\left[y_{\kappa_{R}}^{j}(x_{R})+\beta_{\kappa_{R}}^{-1}\mathcal{F}^{v}\left(y_{\kappa_{R}}^{j}(x_{R}),\nabla y_{\kappa_{R}}^{j}(x_{R})\right)\right]\\
 & +\left(\theta_{L}-\frac{\Delta t^{*}}{2h}\beta_{\kappa_{L}}\right)\left[y_{\kappa}^{j}(x_{L})-\frac{\Delta t^{*}}{2h\left(\theta_{L}-\frac{\Delta t^{*}}{2h}\beta_{\kappa_{L}}\right)}\mathcal{F}^{v}\left(y_{\kappa}^{j}(x_{L}),\nabla y_{\kappa}^{j}(x_{L})\right)\right]\\
 & +\left(\theta_{R}-\frac{\Delta t^{*}}{2h}\beta_{\kappa_{R}}\right)\left[y_{\kappa}^{j}(x_{R})+\frac{\Delta t^{*}}{2h\left(\theta_{R}-\frac{\Delta t^{*}}{2h}\beta_{\kappa_{R}}\right)}\mathcal{F}^{v}\left(y_{\kappa}^{j}(x_{R}),\nabla y_{\kappa}^{j}(x_{R})\right)\right].
\end{aligned}
\label{eq:fully-discrete-form-average-viscous-1d}
\end{equation}
The second equality in Equation~(\ref{eq:fully-discrete-form-average-convective-1d})
is due to the conservation property of the numerical flux:
\[
\mathcal{F}^{\dagger}\left(y_{\kappa}^{j}(x_{L}),y_{\kappa}^{j}(x_{R}),1\right)=-\mathcal{F}^{\dagger}\left(y_{\kappa}^{j}(x_{R}),y_{\kappa}^{j}(x_{L}),-1\right).
\]
Note that Equations~(\ref{eq:fully-discrete-form-average-convective-1d})
and~(\ref{eq:fully-discrete-form-average-viscous-1d}) hold regardless
of whether the integrals in Equation~(\ref{eq:semi-discrete-form})
are computed with conventional quadrature or a quadrature-free approach~\citep{Atk96,Atk98}.

The limiting strategy, which is described in Section~\ref{subsec:limiting-procedure},
requires that $\overline{y}_{\kappa,c}^{j+1}$ and $\overline{y}_{\kappa,v}^{j+1}(x)$
be in $\mathcal{G}_{s_{b}}$ and $\mathcal{G}$, respectively, where
$s_{b}$ is a lower bound on the specific thermodynamic entropy. As
discussed in~\citep{Chi22}, we employ a local entropy bound,
\begin{equation}
s_{b,\kappa}^{j+1}(y)=\min\left\{ s\left(y^{j}(x)\right)\vert x\in\mathcal{D}_{\kappa}\cup\mathcal{D}_{\kappa_{L}}\cup\mathcal{D}_{\kappa_{R}}\right\} ,\label{eq:local-entropy-bound}
\end{equation}
which is motivated by the minimum entropy principle satisfied by entropy
solutions to the multicomponent Euler equations~\citep{Gou20}. It
can be shown that if $y_{\kappa}^{j}(x)\in\mathcal{G}_{s_{b}},\;\forall x\in\mathcal{D}_{\kappa}$
, and $y_{\kappa}^{-,j}\in\mathcal{G}_{s_{b}},\;\forall x\in\partial\mathcal{D}_{\kappa}$,
where $y_{\kappa}^{-}$ denotes the exterior state along $\partial\kappa$,
then $\overline{y}_{\kappa,c}^{j+1}$ is in $\mathcal{G}_{s_{b}}$
under the time-step-size constraint
\begin{equation}
\frac{\Delta t^{*}\lambda}{h}\leq\frac{1}{2}\min\left\{ \theta_{L},\theta_{R}\right\} \label{eq:CFL-condition-convective-1d}
\end{equation}
and the conditions
\begin{equation}
\theta_{L}>0,\theta_{R}>0,\theta_{q}\geq0,q=1,\ldots,n_{q}.\label{eq:theta-conditions-1d}
\end{equation}
More information can be found in~\citep{Chi22}. The conditions under
which $\overline{y}_{\kappa,v}^{j+1}\in\mathcal{G}$ are analyzed
in the following theorem.

\begin{thm}
\label{thm:CFL-condition-1D-viscous}If $y_{\kappa}^{j}(x)\in\mathcal{G},\;\forall x\in\mathcal{D_{\kappa}}$,
and $y_{\kappa}^{-,j}\in\mathcal{G},\;\forall x\in\partial\mathcal{D}_{\kappa}$,
then $\overline{y}_{\kappa,v}^{j+1}$ is also in $\mathcal{G}$ under
the time-step-size constraint
\begin{equation}
\frac{\Delta t^{*}}{h}\leq\min\left\{ \frac{\theta_{L}}{\beta_{\kappa_{L}}},\frac{\theta_{R}}{\beta_{\kappa_{R}}}\right\} ,\label{eq:CFL-condition-1D-viscous}
\end{equation}
the constraints on $\beta$,
\begin{align}
\beta_{\kappa_{L}} & >\max\left\{ \beta^{*}\left(y_{\kappa}^{j}(x_{L}),\mathcal{F}^{v}\left(y_{\kappa}^{j}(x_{L}),\nabla y_{\kappa}^{j}(x_{L})\right),-1\right),\beta^{*}\left(y_{\kappa_{L}}^{j}(x_{L}),\mathcal{F}^{v}\left(y_{\kappa_{L}}^{j}(x_{L}),\nabla y_{\kappa_{L}}^{j}(x_{L})\right),-1\right)\right\} ,\label{eq:beta-constraint-1d-L}\\
\beta_{\kappa_{R}} & >\max\left\{ \beta^{*}\left(y_{\kappa}^{j}(x_{R}),\mathcal{F}^{v}\left(y_{\kappa}^{j}(x_{R}),\nabla y_{\kappa}^{j}(x_{R})\right),1\right),\beta^{*}\left(y_{\kappa_{R}}^{j}(x_{R}),\mathcal{F}^{v}\left(y_{\kappa_{R}}^{j}(x_{R}),\nabla y_{\kappa_{R}}^{j}(x_{R})\right),1\right)\right\} ,\label{eq:beta-constraint-1d-R}
\end{align}
and the conditions~(\ref{eq:theta-conditions-1d}).
\end{thm}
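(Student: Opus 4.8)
The plan is to read off from Equation~(\ref{eq:fully-discrete-form-average-viscous-1d}) that $\overline{y}_{\kappa,v}^{j+1}$ is a convex combination of $n_{q}+4$ states, each of which lies in $\mathcal{G}$, and then to invoke the convexity of $\mathcal{G}$ (which holds, as noted above, because $\rho u^{*}$ is concave and the species-concentration constraints are linear). The $n_{q}+4$ states are: the interior quadrature evaluations $y_{\kappa}^{j}(x_{q})$, $q=1,\ldots,n_{q}$, carrying weights $\theta_{q}$; the two exterior endpoint states shifted by their viscous fluxes, $y_{\kappa_{L}}^{j}(x_{L})-\beta_{\kappa_{L}}^{-1}\mathcal{F}^{v}(y_{\kappa_{L}}^{j}(x_{L}),\nabla y_{\kappa_{L}}^{j}(x_{L}))$ and $y_{\kappa_{R}}^{j}(x_{R})+\beta_{\kappa_{R}}^{-1}\mathcal{F}^{v}(y_{\kappa_{R}}^{j}(x_{R}),\nabla y_{\kappa_{R}}^{j}(x_{R}))$, carrying weights $\tfrac{\Delta t^{*}}{2h}\beta_{\kappa_{L}}$ and $\tfrac{\Delta t^{*}}{2h}\beta_{\kappa_{R}}$; and the two interior endpoint states shifted by their viscous fluxes, $y_{\kappa}^{j}(x_{L})-c_{L}\mathcal{F}^{v}(y_{\kappa}^{j}(x_{L}),\nabla y_{\kappa}^{j}(x_{L}))$ and $y_{\kappa}^{j}(x_{R})+c_{R}\mathcal{F}^{v}(y_{\kappa}^{j}(x_{R}),\nabla y_{\kappa}^{j}(x_{R}))$, with $c_{L}=\frac{\Delta t^{*}}{2h\left(\theta_{L}-\tfrac{\Delta t^{*}}{2h}\beta_{\kappa_{L}}\right)}$ and $c_{R}$ defined analogously, carrying the remaining weights $\theta_{L}-\tfrac{\Delta t^{*}}{2h}\beta_{\kappa_{L}}$ and $\theta_{R}-\tfrac{\Delta t^{*}}{2h}\beta_{\kappa_{R}}$. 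These weights sum to $\sum_{q}\theta_{q}+\theta_{L}+\theta_{R}=1$, as already recorded in the text.

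First I would check that all the weights are nonnegative: the $\theta_{q}$ are nonnegative by~(\ref{eq:theta-conditions-1d}), and $\tfrac{\Delta t^{*}}{2h}\beta_{\kappa_{L}},\tfrac{\Delta t^{*}}{2h}\beta_{\kappa_{R}}>0$ since $\beta>0$; for the last two weights, the time-step-size constraint~(\ref{eq:CFL-condition-1D-viscous}) gives $\tfrac{\Delta t^{*}}{2h}\beta_{\kappa_{L}}\le\tfrac{1}{2}\theta_{L}$ (and likewise at $x_{R}$), so $\theta_{L}-\tfrac{\Delta t^{*}}{2h}\beta_{\kappa_{L}}\ge\tfrac{1}{2}\theta_{L}>0$, which in particular makes $c_{L}$ and $c_{R}$ finite and well defined.

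Next I would verify that each of the $n_{q}+4$ states belongs to $\mathcal{G}$. The $y_{\kappa}^{j}(x_{q})$ are in $\mathcal{G}$ by hypothesis since $x_{q}\in\mathcal{D}_{\kappa}$, and $y_{\kappa_{L}}^{j}(x_{L}),y_{\kappa_{R}}^{j}(x_{R})$ are in $\mathcal{G}$ as the exterior traces $y_{\kappa}^{-,j}$ on $\partial\mathcal{D}_{\kappa}$; under the running assumption of strictly positive concentrations, all of these states satisfy the hypotheses of Lemma~\ref{lem:beta-constraints}. For the two exterior endpoint terms, $\beta_{\kappa_{L}}$ (resp.\ $\beta_{\kappa_{R}}$) exceeds the corresponding $\beta^{*}$ by~(\ref{eq:beta-constraint-1d-L}) (resp.~(\ref{eq:beta-constraint-1d-R})), so Lemma~\ref{lem:beta-constraints}, applied with unit vector $-1$ (resp.\ $+1$), places $y_{\kappa_{L}}^{j}(x_{L})-\beta_{\kappa_{L}}^{-1}\mathcal{F}^{v}(\cdots)$ in $\mathcal{G}$, and analogously for the $x_{R}$ term. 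For the two interior endpoint terms the effective dissipation coefficient is $c_{L}^{-1}$ (resp.\ $c_{R}^{-1}$), and the crucial point is that $c_{L}\le\beta_{\kappa_{L}}^{-1}$: rearranging, $c_{L}\le\beta_{\kappa_{L}}^{-1}$ is equivalent to $\tfrac{\Delta t^{*}}{2h}\beta_{\kappa_{L}}\le\theta_{L}-\tfrac{\Delta t^{*}}{2h}\beta_{\kappa_{L}}$, i.e.\ to $\tfrac{\Delta t^{*}}{h}\le\tfrac{\theta_{L}}{\beta_{\kappa_{L}}}$, which is exactly~(\ref{eq:CFL-condition-1D-viscous}); hence $c_{L}^{-1}\ge\beta_{\kappa_{L}}>\beta^{*}(y_{\kappa}^{j}(x_{L}),\mathcal{F}^{v}(y_{\kappa}^{j}(x_{L}),\nabla y_{\kappa}^{j}(x_{L})),-1)$ by~(\ref{eq:beta-constraint-1d-L}), and Lemma~\ref{lem:beta-constraints} (applied with dissipation coefficient $c_{L}^{-1}$ and unit vector $-1$) gives $y_{\kappa}^{j}(x_{L})-c_{L}\mathcal{F}^{v}(y_{\kappa}^{j}(x_{L}),\nabla y_{\kappa}^{j}(x_{L}))\in\mathcal{G}$; the $x_{R}$ term is identical, using~(\ref{eq:beta-constraint-1d-R}). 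Since $\mathcal{G}$ is convex, the convex combination lies in $\mathcal{G}$, i.e.\ $\overline{y}_{\kappa,v}^{j+1}\in\mathcal{G}$.

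The one nontrivial step — the one I would be most careful about — is the handling of the two interior endpoint terms: one must confirm simultaneously that their weights $\theta_{L}-\tfrac{\Delta t^{*}}{2h}\beta_{\kappa_{L}}$ and $\theta_{R}-\tfrac{\Delta t^{*}}{2h}\beta_{\kappa_{R}}$ are nonnegative \emph{and} that the implied coefficients $c_{L}^{-1}$ and $c_{R}^{-1}$ still dominate the relevant $\beta^{*}$. Both facts follow from the single constraint~(\ref{eq:CFL-condition-1D-viscous}), and this is precisely where the particular form of that constraint — with $\beta_{\kappa_{L}}$ and $\beta_{\kappa_{R}}$ entering separately rather than through their sum — is needed; everything else is bookkeeping that has essentially been carried out already in deriving Equation~(\ref{eq:fully-discrete-form-average-viscous-1d}).
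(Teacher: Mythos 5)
Your proposal is correct and follows essentially the same route as the paper's proof: identify $\overline{y}_{\kappa,v}^{j+1}$ from Equation~(\ref{eq:fully-discrete-form-average-viscous-1d}) as a convex combination, use the time-step constraint~(\ref{eq:CFL-condition-1D-viscous}) to show $\theta_{L}-\tfrac{\Delta t^{*}}{2h}\beta_{\kappa_{L}}\geq\tfrac{\Delta t^{*}}{2h}\beta_{\kappa_{L}}$ so that the effective coefficient on the interior endpoint terms is at most $\beta_{\kappa_{L}}^{-1}$ (resp.\ $\beta_{\kappa_{R}}^{-1}$), and invoke Lemma~\ref{lem:beta-constraints} together with convexity of $\mathcal{G}$. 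Your explicit accounting of the weights summing to one and of the nonnegativity of each weight is slightly more detailed than the paper's, but the argument is the same.
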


\begin{proof}
The inequality~(\ref{eq:beta-constraint-1d-L}) guarantees that $y_{\kappa_{L}}^{j}(x_{L})-\beta_{\kappa_{L}}^{-1}\mathcal{F}^{v}\left(y_{\kappa_{L}}^{j}(x_{L}),\nabla y_{\kappa_{L}}^{j}(x_{L})\right)\in\mathcal{G}$.
According to the time-step-size constraint~(\ref{eq:CFL-condition-1D-viscous}),
we have
\[
\frac{\Delta t^{*}}{h}\leq\frac{\theta_{L}}{\beta_{\kappa_{L}}},
\]
such that
\[
\theta_{L}-\frac{\Delta t^{*}}{2h}\beta_{\kappa_{L}}\geq\frac{\Delta t^{*}}{2h}\beta_{\kappa_{L}}.
\]
It follows that
\begin{align*}
\frac{\Delta t^{*}}{2h\left(\theta_{L}-\frac{\Delta t^{*}}{2h}\beta_{\kappa_{L}}\right)} & \leq\frac{\Delta t^{*}}{2h\left(\frac{\Delta t^{*}}{2h}\beta_{\kappa_{L}}\right)}\\
 & =\beta_{\kappa_{L}}^{-1},
\end{align*}
which means $y_{\kappa}^{j}(x_{L})-\frac{\Delta t^{*}}{2h\left(\theta_{L}-\frac{\Delta t^{*}}{2h}\beta_{\kappa_{L}}\right)}\mathcal{F}^{v}\left(y_{\kappa}^{j}(x_{L}),\nabla y_{\kappa}^{j}(x_{L})\right)\in\mathcal{G}$.
Moreover, we have $\frac{\Delta t^{*}}{2h}\beta_{\kappa_{L}}\leq\theta_{L}\leq1$.
The same arguments can be applied to show
\begin{align*}
 & y_{\kappa_{R}}^{j}(x_{R})+\beta_{\kappa_{R}}^{-1}\mathcal{F}^{v}\left(y_{\kappa_{R}}^{j}(x_{R}),\nabla y_{\kappa_{R}}^{j}(x_{R})\right)\in\mathcal{G},\\
 & y_{\kappa}^{j}(x_{R})+\frac{\Delta t^{*}}{2h\left(\theta_{R}-\frac{\Delta t^{*}}{2h}\beta_{\kappa_{R}}\right)}\mathcal{F}^{v}\left(y_{\kappa}^{j}(x_{R}),\nabla y_{\kappa}^{j}(x_{R})\right)\in\mathcal{G},\\
 & \frac{\Delta t^{*}}{2h}\beta_{\kappa_{R}}\leq\theta_{R}\leq1.
\end{align*}
Therefore, $\overline{y}_{\kappa,v}^{j+1}$ is a convex combination
of states in $\mathcal{G}$, such that $\overline{y}_{\kappa,v}^{j+1}\in\mathcal{G}$.
\end{proof}
\begin{rem}
Though forward Euler time stepping is employed for demonstration purposes,
any time integration scheme that can be expressed as a convex combination
of forward Euler steps, such as strong-stability-preserving Runge-Kutta
(SSPRK) methods, can be used.
\end{rem}

As previously mentioned, the final ingredient of the positivity-preserving,
entropy-bounded DG scheme is a limiting strategy (described in Section~\ref{subsec:limiting-procedure})
to ensure $y_{\kappa,c}^{j+1}(x)\in\mathcal{G}_{s_{b}}$ and $y_{\kappa,v}^{j+1}(x)\in\mathcal{G}$,
for all $x\in\mathcal{D}_{\kappa}$, where $y_{c}^{j+1}$ satisfies

\begin{gather*}
\sum_{\kappa\in\mathcal{T}}\left(\frac{y_{c}^{j+1}-y^{j}}{\Delta t^{*}},\mathfrak{v}\right)_{\kappa}-\sum_{\kappa\in\mathcal{T}}\left(\mathcal{F}^{c}\left(y^{j},\nabla y^{j}\right),\nabla\mathfrak{v}\right)_{\kappa}+\sum_{\epsilon\in\mathcal{E}}\left(\mathcal{F}^{c\dagger}\left(y^{j},n\right),\left\llbracket \mathfrak{v}\right\rrbracket \right)_{\epsilon}=0\qquad\forall\mathfrak{v}\in V_{h}^{p},
\end{gather*}
and $y_{v}^{j+1}$ satisfies

\begin{gather*}
\sum_{\kappa\in\mathcal{T}}\left(\frac{y_{v}^{j+1}-y^{j}}{\Delta t^{*}},\mathfrak{v}\right)_{\kappa}-\sum_{\epsilon\in\mathcal{E}}\left(\average{\mathcal{F}^{v}\left(y^{j},\nabla y^{j}\right)}\cdot n-\delta^{v}\left(y^{j},\nabla y^{j},n\right),\left\llbracket \mathfrak{v}\right\rrbracket \right)_{\epsilon}\\
+\sum_{\kappa\in\mathcal{T}}\left(G\left(y^{j,+}\right):\left(\average{y^{j}}-y^{j,+}\right)\otimes n,\nabla\mathfrak{v}\right)_{\partial\kappa}=0\qquad\forall\mathfrak{v}\in V_{h}^{p},
\end{gather*}
such that $y^{j+1}=\frac{1}{2}\left(y_{c}^{j+1}+y_{v}^{j+1}\right)$.
$y_{\kappa}^{j+1}(x)$ is then in $\mathcal{G}$, for all $x\in\mathcal{D}_{\kappa}$,
since $y_{\kappa}^{j+1}$ is a convex combination of $y_{\kappa,c}^{j+1}$
and $y_{\kappa,v}^{j+1}$. Entropy boundedness is enforced on only
the convective contribution since the viscous flux function is not
fully compatible with an entropy constraint and the minimum entropy
principle does not hold for the Navier-Stokes equations unless the
thermal diffusivity is zero~\citep{Tad86,Gue14}. The limiting strategy
here relies on a simple linear-scaling limiter that is conservative,
maintains stability, and in general preserves order of accuracy for
smooth solutions~\citep{Zha10,Zha17,Zha12_2,Lv15_2,Jia18}. However,
it is not expected to suppress all small-scale instabilities, which
is why artificial viscosity is employed in tandem.

\subsubsection{Modified flux interpolation\label{subsec:modified-flux-interpolation-1d}}

In this subsection, we discuss how to account for the modified flux
interpolation~(\ref{eq:modified-flux-projection}). In~\citep{Chi22_2},
we already discussed the inviscid case; therefore, we only consider
$y_{\kappa,v}$ here. The scheme satisfied by the element averages
becomes
\begin{equation}
\begin{aligned}\overline{y}_{\kappa,v}^{j+1}= & \overline{y}_{\kappa}^{j}+\frac{\Delta t^{*}}{h}\left[-\frac{1}{2}\mathcal{F}^{v}\left(\widetilde{y}_{\kappa}^{j}(x_{L}),\nabla y_{\kappa}^{j}(x_{L})\right)-\frac{1}{2}\mathcal{F}^{v}\left(\widetilde{y}_{\kappa_{L}}^{j}(x_{L}),\nabla y_{\kappa_{L}}^{j}(x_{L})\right)\right.\\
 & +\frac{1}{2}\mathcal{F}^{v}\left(\widetilde{y}_{\kappa}^{j}(x_{R}),\nabla y_{\kappa}^{j}(x_{R})\right)+\frac{1}{2}\mathcal{F}^{v}\left(\widetilde{y}_{\kappa_{R}}^{j}(x_{R}),\nabla y_{\kappa_{R}}^{j}(x_{R})\right)\\
 & \left.-\frac{\beta_{\kappa_{L}}}{2}\widetilde{y}_{\kappa}^{j}(x_{L})+\frac{\beta_{\kappa_{L}}}{2}\widetilde{y}_{\kappa_{L}}^{j}(x_{L})-\frac{\beta_{\kappa_{R}}}{2}\widetilde{y}_{\kappa}^{j}(x_{R})+\frac{\beta_{\kappa_{R}}}{2}\widetilde{y}_{\kappa_{R}}^{j}(x_{R})\right].
\end{aligned}
\label{eq:fully-discrete-form-average-viscous-1d-modified}
\end{equation}
If the nodal set includes the endpoints (e.g., equidistant or Gauss-Lobatto
points), then $y_{\kappa}^{j}(x_{L})=\widetilde{y}_{\kappa}^{j}(x_{L})$
and $y_{\kappa}^{j}(x_{R})=\widetilde{y}_{\kappa}^{j}(x_{R})$, in
which case both Equation~(\ref{eq:fully-discrete-form-average-viscous-1d})
and Theorem~\ref{thm:CFL-condition-1D-viscous} hold and the modified
flux interpolation does not require any additional modifications to
the formulation.

\subsection{Limiting procedure\label{subsec:limiting-procedure}}

Here, we describe the positivity-preserving and entropy limiters to
ensure $y_{\kappa,c}^{j+1}(x)\in\mathcal{G}_{s_{b}}$ and $y_{\kappa,v}^{j+1}(x)\in\mathcal{G}$,
respectively, for all $x\in\mathcal{D}_{\kappa}$. We assume that
$\overline{y}_{\kappa,c}^{j+1}(x)\in\mathcal{G}_{s_{b}}$ and $\overline{y}_{\kappa,v}^{j+1}(x)\in\mathcal{G}$.
The $j+1$ superscript and $\kappa$ subscript are dropped for brevity.
The limiting procedure is identical across one, two, and three dimensions.

\subsubsection*{Positivity-preserving limiter}

The positivity-preserving limiter enforces $\rho>0$, $C_{i}\geq0,\;\forall i$,
and $\rho u^{*}>0$ via the following steps:
\begin{itemize}
\item If $\rho(y\left(x\right))>\varepsilon,\:\forall x\in\mathcal{D}_{\kappa}$,
where $\varepsilon$ is a small positive number, such as $10^{-10}$,
then set $C_{i}^{(1)}=C_{i}=\sum_{j=1}^{n_{b}}C_{i}(x_{j})\phi_{j},i=1,\ldots,n_{s}$;
if not, set
\[
C_{i}^{(1)}=\overline{C}_{i}+\omega^{(1)}\left(C_{i}-\overline{C}_{i}\right),\quad\omega^{(1)}=\frac{\rho(\overline{y})-\epsilon}{\rho(\overline{y})-\underset{x\in\mathcal{D}}{\min}\rho(y(x))}.
\]
for $i=1,\ldots,n_{s}.$ Let $y^{(1)}=\left(\rho v_{1},\ldots,\rho v_{d},\rho e_{t},C_{1}^{(1)},\ldots,C_{n_{s}}^{(1)}\right)$.
This is referred to as the ``density limiter'' in Section~\ref{subsec:zero-species-concentrations}.
\item For $i=1,\ldots,n_{s}$, if $C_{i}^{(1)}(x)\geq0,\:\forall x\in\mathcal{D}_{\kappa}$,
then set $C_{i}^{(2)}=C_{i}^{(1)}$; if not, set
\[
C_{i}^{(2)}=\overline{C}_{i}+\omega^{(2)}\left(C_{i}^{(1)}-\overline{C}_{i}\right),\quad\omega^{(2)}=\frac{\overline{C}_{i}}{\overline{C}_{i}-\underset{x\in\mathcal{D}}{\min}C_{i}^{(1)}(x)}.
\]
Let $y^{(2)}=\left(\rho v_{1},\ldots,\rho v_{d},\rho e_{t},C_{1}^{(2)},\ldots,C_{n_{s}}^{(2)}\right)$.
\item If $\rho u^{*}\left(y^{(2)}(x)\right)>\epsilon,\:\forall x\in\mathcal{D}_{\kappa}$,
then set $y^{(3)}=y^{(2)}$; if not, set
\[
y^{(3)}=\overline{y}+\omega^{(3)}\left(y^{(2)}-\overline{y}\right),\quad\omega^{(3)}=\frac{\rho u^{*}(\overline{y})-\epsilon}{\rho u^{*}(\overline{y})-\underset{x\in\mathcal{D}}{\min}\rho u^{*}(y^{(2)}(x))}.
\]
Since $\rho u^{*}(y)$ is a concave function of $y$\citep{Chi22},
$\rho u^{*}(y^{(3)}(x))>0,\:\forall x\in\mathcal{D}_{\kappa}$~\citep{Wan12,Zha17}.
\end{itemize}
The positivity-preserving limiter is applied to both $y_{c}$ and
$y_{v}$.

\subsubsection*{Entropy limiter}

The entropy limiter, which is applied only to $y_{c}$, enforces $\chi\geq0$
as follows: if $\chi\left(y^{(3)}(x)\right)\geq0,\:\forall x\in\mathcal{D}_{\kappa}$,
then set $y^{(4)}=y^{(3)}$; if not, set
\[
y^{(4)}=\overline{y}+\omega^{(4)}\left(y^{(3)}-\overline{y}\right),\quad\omega^{(4)}=\frac{\chi(\overline{y})}{\chi(\overline{y})-\underset{x\in\mathcal{D}}{\min}\chi(y^{(3)}(x))}.
\]
Since $\chi(y)$ is a concave function of $y$~\citep{Jia18,Wu21_2},
$s\left(y^{(4)}(x)\right)\geq s_{b},\:\forall x\in\mathcal{D}_{\kappa}$.

The solution is then replaced as
\[
y\leftarrow\frac{1}{2}\left(y_{c}^{(4)}+y_{v}^{(3)}\right).
\]
This limiting procedure is applied at the end of every RK stage. Note
that if $y$ is split in a different manner as~\citep{Dza22}
\[
y\leftarrow y_{\ddagger}^{(3)},
\]
where $y_{\ddagger}$ satisfies
\begin{gather*}
\sum_{\kappa\in\mathcal{T}}\left(\frac{y_{\ddagger}^{j+1}-y_{\mathsf{c}}^{(4),j+1}}{\Delta t},\mathfrak{v}\right)_{\kappa}-\sum_{\epsilon\in\mathcal{E}}\left(\average{\mathcal{F}^{v}\left(y^{j},\nabla y^{j}\right)}\cdot n-\delta^{v}\left(y^{j},\nabla y^{j},n\right),\left\llbracket \mathfrak{v}\right\rrbracket \right)_{\epsilon}\\
+\sum_{\kappa\in\mathcal{T}}\left(G\left(y^{j,+}\right):\left(\average{y^{j}}-y^{j,+}\right)\otimes n,\nabla\mathfrak{v}\right)_{\partial\kappa}=0\qquad\forall\mathfrak{v}\in V_{h}^{p},
\end{gather*}
and $y_{\mathsf{c}}$ satisfies
\begin{gather*}
\sum_{\kappa\in\mathcal{T}}\left(\frac{y_{\mathsf{c}}^{j+1}-y^{j}}{\Delta t},\mathfrak{v}\right)_{\kappa}-\sum_{\kappa\in\mathcal{T}}\left(\mathcal{F}^{c}\left(y^{j},\nabla y^{j}\right),\nabla\mathfrak{v}\right)_{\kappa}+\sum_{\epsilon\in\mathcal{E}}\left(\mathcal{F}^{c\dagger}\left(y^{j},n\right),\left\llbracket \mathfrak{v}\right\rrbracket \right)_{\epsilon}=0\qquad\forall\mathfrak{v}\in V_{h}^{p},
\end{gather*}
then $\overline{y}_{\kappa,\ddagger}^{j+1}$ may not be in $\mathcal{G}$
in the case that $y_{\kappa,\mathsf{c}}^{j+1}(x)\notin\mathcal{G}_{s_{b}},\forall x\in\mathcal{D}_{\kappa}$
(i.e., $y_{\kappa,\mathsf{c}}^{(4),j+1}\neq y_{\kappa,\mathsf{c}}^{j+1}$).

\subsection{Multidimensional case}

In this section, the one-dimensional positivity-preserving, entropy-bounded
DG method presented in the previous subsection is extended to two
and three dimensions. Before doing so, we first review the geometric
mapping, as well as volume and surface quadrature rules. For conciseness,
any key ideas already presented in Section~\ref{subsec:1D-PPDG}
are only briefly mentioned here.

\subsubsection{Preliminaries\label{subsec:preliminaries-EBDG-2D}}

\paragraph{Geometric mapping}

Let $\xi=\left(\xi_{1},\ldots,\xi_{d}\right)$ denote the reference
coordinates and $\widehat{\kappa}$ denote the reference element.
The mapping $x(\xi):\widehat{\kappa}\rightarrow\kappa$ is defined
as
\[
x(\xi)=\sum_{m=1}^{n_{g}}x_{\kappa,m}\Phi_{m}(\xi),
\]
where $\left\{ x_{\kappa,1},\ldots,x_{\kappa,n_{g}}\right\} $ is
the set of geometric nodes of $\kappa,$ $\left\{ \Phi_{1},\ldots,\Phi_{n_{g}}\right\} $
is the set of geometric basis functions, and $n_{g}$ is the number
of basis functions. Let $J_{\kappa}$ denote the geometric Jacobian
and $\left|J_{\kappa}\right|$ denote its determinant, which is allowed
to vary with $\xi$. $y_{\kappa}$ can be expressed as
\[
y_{\kappa}=\sum_{j=1}^{n_{b}}y_{\kappa}(x_{j})\phi(\xi),\quad x=x(\xi)\in\kappa,\;\forall\xi\in\widehat{\kappa}.
\]
Let $\kappa^{(f)}$ be the $f$th neighbor of $\kappa$ and $\partial\kappa^{(f)}$
be the $f$th face of $\kappa$, such that $\partial\kappa=\bigcup_{f=1}^{n_{f}}\partial\kappa^{(f)}$,
where $n_{f}$ is the number of faces. Note that $n_{f}$ can vary
across elements, but we slightly abuse notation for brevity. $\widehat{\epsilon}$
denotes the reference face. We define $x\left(\zeta^{(f)}\right):\widehat{\epsilon}\rightarrow\partial\kappa^{(f)}$,
with $\zeta^{(f)}=\left(\zeta_{1}^{(f)},\ldots,\zeta_{d-1}^{(f)}\right)$
denoting the reference coordinates, as
\[
x\left(\zeta^{(f)}\right)=\sum_{m=1}^{n_{g,f}^{\partial}}x_{\kappa,m}^{(f)}\Phi_{m}^{(f)}\left(\zeta^{(f)}\right),
\]
where $\left\{ x_{\kappa,1}^{(f)},\ldots x_{\kappa,n_{g,f}^{\partial}}^{(f)}\right\} $
is the set of geometric nodes of $\partial\kappa^{(f)}$, $\left\{ \Phi_{1}^{(f)},\ldots,\Phi_{n_{g,f}^{\partial}}^{(f)}\right\} $
is the set of basis functions, and $n_{g,f}^{\partial}$ is the number
of basis functions. $\xi\left(\zeta^{(f)}\right):\widehat{\epsilon}\rightarrow\widehat{\kappa}$
is the mapping from the reference face to the reference element. The
surface Jacobian is denoted $J_{\partial\kappa}^{(f)}$, which can
vary with $\zeta^{(f)}$.

\paragraph{Quadrature rules}

Consider a volume quadrature rule with $n_{q}$ points and positive
weights, denoted $\xi_{q}$ and $w_{q}$, $q=1,\ldots,n_{q}$, respectively,
with $n_{q}\geq n_{b}$. The weights are appropriately scaled such
that $\sum_{q=1}^{n_{q}}w_{q}=\left|\widehat{\kappa}\right|$, where
$\left|\widehat{\kappa}\right|$ is the volume of $\widehat{\kappa}$.
The quadrature rule can be used to evaluate the volume integral over
$\kappa$ of a generic function, $g(x)$, as
\[
\int_{\kappa}g(x)dx=\int_{\widehat{\kappa}}g(x(\xi))\left|J_{\kappa}(\xi)\right|d\xi\approx\sum_{q=1}^{n_{q}}g\left(x(\xi_{q})\right)\left|J_{\kappa}(\xi_{q})\right|w_{q},
\]
If $g(x)$ is a polynomial, then quadrature with sufficiently high
$n_{q}$ gives the exact value.

Similarly, consider a surface quadrature rule with $n_{q}^{\partial}$
points and positive weights, denoted $\zeta_{l}$ and $w_{l}^{\partial}$,$l=1,\ldots,n_{q}^{\partial}$.
The weights are scaled such that $\sum_{l=1}^{n_{q}^{\partial}}w_{l}^{\partial}=\left|\widehat{\epsilon}\right|$,
where $\left|\widehat{\epsilon}\right|$ is the surface area $\widehat{\epsilon}$.
The surface quadrature rule can be used to evaluate the surface integral
over $\partial\kappa^{(f)}$ of a generic function as
\[
\int_{\partial\kappa^{(f)}}g(x)ds=\int_{\widehat{\epsilon}}g\left(x\left(\zeta^{(f)}\right)\right)\left|J_{\partial\kappa}^{(k)}\left(\zeta^{(f)}\right)\right|d\zeta\approx\sum_{l=1}^{n_{q}^{\partial}}g\left(x\left(\zeta_{l}^{(f)}\right)\right)\left|J_{\partial\kappa}^{(f)}\left(\zeta_{l}^{(f)}\right)\right|w_{f,l}^{\partial}=\sum_{l=1}^{n_{q}^{\partial}}g\left(x\left(\zeta_{l}^{(f)}\right)\right)\nu_{f,l}^{\partial},
\]
where $\nu_{f,l}^{\partial}=\left|J_{\partial\kappa}^{(f)}\left(\zeta_{l}^{(f)}\right)\right|w_{f,l}^{\partial}$.
If $g(x)$ is a polynomial, then quadrature with sufficiently high
$n_{q}^{\partial}$ yields the exact value. The closed surface integral
over $\partial\kappa$ can be computed as
\[
\int_{\partial\kappa}g(x)ds=\sum_{f=1}^{n_{f}}\int_{\partial\kappa^{(f)}}g(x)ds=\sum_{f=1}^{n_{f}}\int_{\widehat{\epsilon}}g\left(x\left(\zeta^{(f)}\right)\right)\left|J_{\partial\kappa}^{(f)}\left(\zeta^{(f)}\right)\right|d\zeta\approx\sum_{f=1}^{n_{f}}\sum_{l=1}^{n_{q,f}^{\partial}}g\left(x\left(\zeta_{l}^{(f)}\right)\right)\nu_{f,l}^{\partial},
\]
where we allow a different quadrature rule to be used for each face.

\paragraph{Additional considerations}

In the following, assume that the surface integrals in Equation~(\ref{eq:semi-discrete-form})
are computed using $\left\{ \zeta_{1}^{(f)},\ldots,\zeta_{n_{q,f}^{\partial}}^{(f)}\right\} _{f=1}^{n_{f}}$
as integration points. Define $\partial\mathcal{D}_{\kappa}$ and
$\mathcal{D}_{\kappa}$ as
\[
\partial\mathcal{D}_{\kappa}=\bigcup_{f=1}^{n_{f}}\left\{ x\left(\zeta_{l}^{(f)}\right),l=1,\ldots,n_{q,f}^{\partial}\right\} ,
\]
and
\[
\mathcal{D_{\kappa}}=\partial\mathcal{D}_{\kappa}\bigcup\left\{ x(\xi_{q}),q=1,\ldots,n_{q}\right\} =\bigcup_{f=1}^{n_{f}}\left\{ x\left(\zeta_{l}^{(f)}\right),l=1,\ldots,n_{q,f}^{\partial}\right\} \bigcup\left\{ x(\xi_{q}),q=1,\ldots,n_{q}\right\} ,
\]
 respectively. The points in $\left\{ x(\xi_{q}),q=1,\ldots,n_{q}\right\} $
need not be used in the evaluation of any volume integrals in Equation~(\ref{eq:semi-discrete-form}).
Without loss of generality, we define $\nu_{f,l}^{\partial}$ as
\begin{equation}
\nu_{f,l}^{\partial}=\begin{cases}
\left|J_{\partial\kappa}^{(f)}(\zeta_{l})\right|w_{f,l}^{\partial}, & l=1,\ldots,n_{q,f}^{\partial}\\
0, & l=n_{q,f}^{\partial}+1,\ldots,N
\end{cases},\label{eq:Jacobian_weights_piecewise}
\end{equation}
where the faces are ordered such that $N=\max_{f}\left\{ n_{q,f}^{\partial}\right\} =n_{q,n_{f}}^{\partial}$.
As a result, we have
\[
\sum_{f=1}^{n_{f}}\sum_{l=1}^{N}\nu_{f,l}^{\partial}=\sum_{f=1}^{n_{f}}\sum_{l=1}^{n_{q,f}^{\partial}}\nu_{f,l}^{\partial}=\sum_{f=1}^{n_{f}}\left|\partial\kappa^{(f)}\right|=\left|\partial\kappa\right|,
\]
where $|\partial\kappa|$ is the surface area of $\kappa$ and $\left|\partial\kappa^{(f)}\right|$
is the surface area of the $f$th face.

Note that although a quadrature-free implementation~\citep{Atk96,Atk98}
is used in this work to compute the integrals in Equation~(\ref{eq:semi-discrete-form}),
recall from Section~\ref{subsec:1D-PPDG-high-order} that the analysis
is performed on the scheme satisfied by the element averages, which
is identical between quadrature-based and quadrature-free approaches.
Nevertheless, the scheme satisfied by the element averages is presented
in terms of a quadrature-based approach for consistency with previous
studies.

\subsubsection{First-order DG scheme in multiple dimensions\label{subsec:2D-PPDG-first-order}}

Consider the following $p=0$, element-local DG discretization with
forward Euler time stepping:
\begin{equation}
\begin{split}y_{\kappa}^{j+1}= & y_{\kappa}^{j+1}-\sum_{f=1}^{n_{f}}\sum_{l=1}^{n_{q,f}^{\partial}}\frac{\Delta t\nu_{f,l}^{\partial}}{|\kappa|}\mathcal{F}^{c\dagger}\left(y_{\kappa}^{j},y_{\kappa^{(f)}}^{j},n\left(\zeta_{l}^{(f)}\right)\right)\\
 & +\sum_{f=1}^{n_{f}}\sum_{l=1}^{n_{q,f}^{\partial}}\frac{\Delta t\nu_{f,l}^{\partial}}{|\kappa|}\left[\frac{1}{2}\mathcal{F}^{v}\left(y_{\kappa}^{j},\nabla y_{\kappa}^{j}\right)\cdot n\left(\zeta_{l}^{(f)}\right)+\frac{1}{2}\mathcal{F}^{v}\left(y_{\kappa^{(f)}}^{j},\nabla y_{\kappa^{(f)}}^{j}\right)\cdot n\left(\zeta_{l}^{(f)}\right)\right.\\
 & \left.-\delta^{v}\left(y_{\kappa}^{j},y_{\kappa^{(f)}}^{j},\nabla y_{\kappa}^{j},\nabla y_{\kappa^{(f)}}^{j},n\left(\zeta_{l}^{(f)}\right)\right)\right],
\end{split}
\label{eq:p0-PPDG-2D}
\end{equation}
which can be rearranged to split the convective and diffusive contributions
as
\begin{align}
y_{\kappa}^{j+1}= & \frac{1}{2}\left(y_{\kappa,c}^{j+1}+y_{\kappa,v}^{j+1}\right),\\
y_{\kappa,c}^{j+1}= & y_{\kappa}^{j}-\sum_{f=1}^{n_{f}}\sum_{l=1}^{n_{q,f}^{\partial}}\frac{\Delta t^{*}\nu_{f,l}^{\partial}}{|\kappa|}\mathcal{F}^{c\dagger}\left(y_{\kappa}^{j},y_{\kappa^{(f)}}^{j},n\left(\zeta_{l}^{(f)}\right)\right),\\
y_{\kappa,v}^{j+1}= & y_{\kappa}^{j}+\sum_{f=1}^{n_{f}}\sum_{l=1}^{n_{q,f}^{\partial}}\frac{\Delta t^{*}\nu_{f,l}^{\partial}}{|\kappa|}\left[\frac{1}{2}\mathcal{F}^{v}\left(y_{\kappa}^{j},\nabla y_{\kappa}^{j}\right)\cdot n\left(\zeta_{l}^{(f)}\right)+\frac{1}{2}\mathcal{F}^{v}\left(y_{\kappa^{(f)}}^{j},\nabla y_{\kappa^{(f)}}^{j}\right)\cdot n\left(\zeta_{l}^{(f)}\right)\right.\nonumber \\
 & \left.-\delta^{v}\left(y_{\kappa}^{j},y_{\kappa^{(f)}}^{j},\nabla y_{\kappa}^{j},\nabla y_{\kappa^{(f)}}^{j},n\left(\zeta_{l}^{(f)}\right)\right)\right],\label{eq:p0-2D-viscous-contribution}
\end{align}
where $\left|\kappa\right|$ is the volume of the element. Since $\mathcal{F}^{v}\left(y_{\kappa},\nabla y_{\kappa}\right)=0$,
Equation~(\ref{eq:p0-2D-viscous-contribution}) reduces to
\begin{align}
y_{\kappa,v}^{j+1}= & y_{\kappa}^{j}-\sum_{f=1}^{n_{f}}\sum_{l=1}^{n_{q,f}^{\partial}}\frac{\Delta t^{*}\nu_{f,l}^{\partial}}{|\kappa|}\delta^{v}\left(y_{\kappa}^{j},y_{\kappa^{(f)}}^{j},\nabla y_{\kappa}^{j},\nabla y_{\kappa^{(f)}}^{j},n\left(\zeta_{l}^{(f)}\right)\right)\nonumber \\
 & =y_{\kappa}^{j}-\sum_{f=1}^{n_{f}}\frac{\Delta t^{*}\left|\partial\kappa^{(f)}\right|}{|\kappa|}\frac{\beta_{f}}{2}\left[y_{\kappa}^{j}-y_{\kappa^{(f)}}^{j}\right]\nonumber \\
 & =\left[1-\sum_{f=1}^{n_{f}}\frac{\Delta t^{*}\left|\partial\kappa^{(f)}\right|}{2\left|\kappa\right|}\beta_{f}\right]y_{\kappa}^{j}+\sum_{f=1}^{n_{f}}\frac{\Delta t^{*}\left|\partial\kappa^{(f)}\right|}{2|\kappa|}\beta_{f}y_{\kappa^{(f)}}^{j}\label{eq:p0-2D-viscous-contribution-convex-combination}
\end{align}
Under the time-step-size constraint
\[
\sum_{f=1}^{n_{f}}\frac{\Delta t^{*}\left|\partial\kappa^{(f)}\right|}{2\left|\kappa\right|}\beta_{f}\leq1,
\]
the RHS of Equation~(\ref{eq:p0-2D-viscous-contribution-convex-combination})
is a convex combination of $y_{\kappa}^{j}$ and $y_{\kappa^{(f)}}^{j},f=1,\ldots,n_{f}$.
As such, $y_{\kappa}^{j}\in\mathcal{G}$ and $y_{\kappa^{(f)}}^{j}\in\mathcal{G},f=1,\ldots,n_{f}$
imply $y_{\kappa,v}^{j+1}\in\mathcal{G}$.

\subsubsection{High-order DG scheme in multiple dimensions\label{subsec:2D-PPDG-high-order}}

As in the one-dimensional case, the element-local solution average
can be expanded as~\citep{Lv15_2}
\begin{align}
\overline{y}_{\kappa} & =\sum_{q=1}^{n_{q}}\frac{\left|J_{\kappa}(\xi_{q})\right|w_{q}}{|\kappa|}y_{\kappa}\left(\xi_{q}\right),\nonumber \\
 & =\sum_{q=1}^{n_{q}}\theta_{q}y_{\kappa}\left(\xi_{q}\right)+\sum_{f=1}^{n_{f}}\sum_{l=1}^{n_{q,f}^{\partial}}\theta_{f,l}y_{\kappa}\left(\xi\left(\zeta_{l}^{(f)}\right)\right).\label{eq:element-average-convex-combination-2D}
\end{align}
where, if $\partial\mathcal{D}_{\kappa}\subseteq\left\{ x(\xi_{q}),q=1,\ldots,n_{q}\right\} $,
\[
\theta_{q}=\begin{cases}
\frac{\left|J_{\kappa}(\xi_{q})\right|w_{q}}{|\kappa|} & x\left(\xi_{q}\right)\notin\partial\mathcal{D}_{\kappa}\\
0 & x\left(\xi_{q}\right)\in\partial\mathcal{D}_{\kappa}
\end{cases}
\]
and
\[
\theta_{f,l}=\frac{\left|J_{\kappa}\left(\xi\left(\zeta_{l}^{(f)}\right)\right)\right|w_{f,l}}{|\kappa|N_{f,l}},
\]
with $w_{f,l}$ denoting the volume quadrature weight corresponding
to the quadrature point that satisfies $\xi_{q}=\xi\left(\zeta_{l}^{(f)}\right)$
and $N_{f,l}$ denoting the number of faces belonging to $\kappa$
shared by the given point. Otherwise, we take
\[
\theta_{q}=\frac{\left|J_{\kappa}(\xi_{q})\right|w_{q}}{|\kappa|}-\sum_{f=1}^{n_{f}}\sum_{l=1}^{n_{q,f}^{\partial}}\theta_{f,l}\psi_{q}\left(\xi\left(\zeta_{l}^{(f)}\right)\right),
\]
where $\psi_{1},\ldots,\psi_{n_{d}}$ form a set of Lagrange basis
functions whose nodes are located at $n_{d}$ points of the set $\left\{ x_{q},q=1,\ldots,n_{q}\right\} $,
with $n_{b}\leq n_{d}\leq n_{q}$, and $\psi_{n_{d}+1},\ldots,\psi_{n_{q}}$
are equal to zero. As a result, $\sum_{q=1}^{n_{q}}\theta_{q}y_{\kappa}\left(\xi_{q}\right)$
can be written as
\begin{align*}
\sum_{q=1}^{n_{q}}\theta_{q}y_{\kappa}\left(\xi_{q}\right) & =\sum_{q=1}^{n_{q}}\left[\frac{\left|J_{\kappa}(\xi_{q})\right|w_{q}}{|\kappa|}-\sum_{f=1}^{n_{f}}\sum_{l=1}^{n_{q,f}^{\partial}}\theta_{f,l}\psi_{q}\left(\xi\left(\zeta_{l}^{(f)}\right)\right)\right]y_{\kappa}\left(\xi_{q}\right)\\
 & =\sum_{q=1}^{n_{q}}\frac{\left|J_{\kappa}(\xi_{q})\right|w_{q}}{|\kappa|}y_{\kappa}\left(\xi_{q}\right)-\sum_{f=1}^{n_{f}}\sum_{l=1}^{n_{q,f}^{\partial}}\theta_{f,l}\sum_{q=1}^{n_{q}}y_{\kappa}\left(\xi_{q}\right)\psi_{q}\left(\xi\left(\zeta_{l}^{(f)}\right)\right)\\
 & =\sum_{q=1}^{n_{q}}\frac{\left|J_{\kappa}(\xi_{q})\right|w_{q}}{|\kappa|}y_{\kappa}\left(\xi_{q}\right)-\sum_{f=1}^{n_{f}}\sum_{l=1}^{n_{q,f}^{\partial}}\theta_{f,l}y_{\kappa}\left(\xi\left(\zeta_{l}^{(f)}\right)\right).
\end{align*}
$\theta_{f,l}$ will be related to a constraint on the time step size
(see~\citep{Lv15_2} and~\citep{Chi22_2} for additional details).
Since $w_{q}>0,q=1,\ldots n_{q}$, there exist positive values of
$\theta_{f,l}$ that yield $\theta_{q}\geq0$~\citep{Lv15_2}. Furthermore,
we have $\sum_{q=1}^{n_{q}}\theta_{q}+\sum_{f=1}^{n_{f}}\sum_{l=1}^{n_{q,f}^{\partial}}\theta_{f,l}=1.$

Employing the forward Euler time-integration scheme and taking $\mathfrak{v}\in V_{h}^{0}$
yields the fully discrete scheme satisfied by the element averages,
\[
\overline{y}_{\kappa}^{j+1}=\frac{1}{2}\left(\overline{y}_{\kappa,c}^{j+1}+\overline{y}_{\kappa,v}^{j+1}\right),
\]
where
\begin{align}
\overline{y}_{\kappa,c}^{j+1} & =\overline{y}_{\kappa}^{j}-\sum_{f=1}^{n_{f}}\sum_{l=1}^{n_{q,f}^{\partial}}\frac{\Delta t^{*}\nu_{f,l}^{\partial}}{|\kappa|}\mathcal{F}^{\dagger}\left(y_{\kappa}^{j}\left(\xi\left(\zeta_{l}^{(f)}\right)\right),y_{\kappa^{(f)}}^{j}\left(\xi\left(\zeta_{l}^{(f)}\right)\right),n\left(\zeta_{l}^{(f)}\right)\right)\nonumber \\
 & =\sum_{q=1}^{n_{q}}\theta_{q}y_{\kappa}^{j}\left(\xi_{q}\right)+\sum_{f=1}^{n_{f}}\sum_{l=1}^{n_{q,f}^{\partial}}\left[\theta_{f,l}y_{\kappa}^{j}\left(\xi\left(\zeta_{l}^{(f)}\right)\right)-\frac{\Delta t^{*}\nu_{f,l}^{\partial}}{|\kappa|}\mathcal{F}^{\dagger}\left(y_{\kappa}^{j}\left(\xi\left(\zeta_{l}^{(f)}\right)\right),y_{\kappa^{(f)}}^{j}\left(\xi\left(\zeta_{l}^{(f)}\right)\right),n\left(\zeta_{l}^{(f)}\right)\right)\right]\label{eq:fully-discrete-form-average-convective-2d}
\end{align}
and

\begin{eqnarray}
\overline{y}_{\kappa,v}^{j+1} & = & \overline{y}_{\kappa}^{j}+\sum_{f=1}^{n_{f}}\sum_{l=1}^{n_{q,f}^{\partial}}\frac{\Delta t^{*}\nu_{f,l}^{\partial}}{|\kappa|}\left[\frac{1}{2}\mathcal{F}^{v}\left(y_{\kappa}^{j}\left(\xi\left(\zeta_{l}^{(f)}\right)\right),\nabla y_{\kappa}^{j}\left(\xi\left(\zeta_{l}^{(f)}\right)\right)\right)\cdot n\left(\zeta_{l}^{(f)}\right)\right.\nonumber \\
 &  & +\frac{1}{2}\mathcal{F}^{v}\left(y_{\kappa^{(f)}}^{j}\left(\xi\left(\zeta_{l}^{(f)}\right)\right),\nabla y_{\kappa^{(f)}}^{j}\left(\xi\left(\zeta_{l}^{(f)}\right)\right)\right)\cdot n\left(\zeta_{l}^{(f)}\right)\nonumber \\
 &  & \left.-\delta^{v}\left(y_{\kappa}^{j}\left(\xi\left(\zeta_{l}^{(f)}\right)\right),y_{\kappa^{(f)}}^{j}\left(\xi\left(\zeta_{l}^{(f)}\right)\right),\nabla y_{\kappa}^{j}\left(\xi\left(\zeta_{l}^{(f)}\right)\right),\nabla y_{\kappa^{(f)}}^{j}\left(\xi\left(\zeta_{l}^{(f)}\right)\right),n\left(\zeta_{l}^{(f)}\right)\right)\right]\nonumber \\
 & = & \sum_{q=1}^{n_{q}}\theta_{q}y_{\kappa}^{j}\left(\xi_{q}\right)+\sum_{f=1}^{n_{f}}\sum_{l=1}^{n_{q,f}^{\partial}}\left[\frac{\Delta t^{*}\nu_{f,l}^{\partial}}{2|\kappa|}\beta_{f,l}y_{\kappa^{(f)}}^{j}\left(\xi\left(\zeta_{l}^{\left(f\right)}\right)\right)\right.\nonumber \\
 &  & +\frac{\Delta t^{*}\nu_{f,l}^{\partial}}{2|\kappa|}\mathcal{F}^{v}\left(y_{\kappa^{(f)}}^{j}\left(\xi\left(\zeta_{l}^{\left(f\right)}\right)\right),\nabla y_{\kappa^{(f)}}^{j}\left(\xi\left(\zeta_{l}^{\left(f\right)}\right)\right)\right)\cdot n\left(\zeta_{l}^{\left(f\right)}\right)+\theta_{f,l}y_{\kappa}^{j}\left(\xi\left(\zeta_{l}^{\left(f\right)}\right)\right)\nonumber \\
 &  & \left.-\frac{\Delta t^{*}\nu_{f,l}^{\partial}}{2|\kappa|}\beta_{f,l}y_{\kappa}^{j}\left(\xi\left(\zeta_{l}^{\left(f\right)}\right)\right)+\frac{\Delta t^{*}\nu_{f,l}^{\partial}}{2|\kappa|}\mathcal{F}^{v}\left(y_{\kappa}^{j}\left(\xi\left(\zeta_{l}^{\left(f\right)}\right)\right),\nabla y_{\kappa}^{j}\left(\xi\left(\zeta_{l}^{\left(f\right)}\right)\right)\right)\cdot n\left(\zeta_{l}^{\left(f\right)}\right)\right]\nonumber \\
 & = & \sum_{q=1}^{n_{q}}\theta_{q}y_{\kappa}^{j}\left(\xi_{q}\right)\nonumber \\
 &  & +\sum_{f=1}^{n_{f}}\sum_{l=1}^{n_{q,f}^{\partial}}\frac{\Delta t^{*}\nu_{f,l}^{\partial}}{2|\kappa|}\beta_{f,l}\left[y_{\kappa^{(f)}}^{j}\left(\xi\left(\zeta_{l}^{\left(f\right)}\right)\right)+\beta_{f,l}^{-1}\mathcal{F}^{v}\left(y_{\kappa^{(f)}}^{j}\left(\xi\left(\zeta_{l}^{\left(f\right)}\right)\right),\nabla y_{\kappa^{(f)}}^{j}\left(\xi\left(\zeta_{l}^{\left(f\right)}\right)\right)\right)\cdot n\left(\zeta_{l}^{\left(f\right)}\right)\right]\nonumber \\
 &  & +\sum_{f=1}^{n_{f}}\sum_{l=1}^{n_{q,f}^{\partial}}\Lambda_{f,l}\Biggl[y_{\kappa}^{j}\left(\xi\left(\zeta_{l}^{\left(f\right)}\right)\right)+\frac{\Delta t^{*}\nu_{f,l}^{\partial}}{2|\kappa|}\Lambda_{f,l}^{-1}\mathcal{F}^{v}\left(y_{\kappa}^{j}\left(\xi\left(\zeta_{l}^{\left(f\right)}\right)\right),\nabla y_{\kappa}^{j}\left(\xi\left(\zeta_{l}^{\left(f\right)}\right)\right)\right)\cdot n\left(\zeta_{l}^{\left(f\right)}\right)\Biggr],\label{eq:fully-discrete-form-average-viscous-2d}
\end{eqnarray}
with $\Lambda_{f,l}=\theta_{f,l}-\frac{\Delta t^{*}\nu_{f,l}^{\partial}}{2|\kappa|}\beta_{f,l}$.
Standard flux interpolation, as in Equation~(\ref{eq:flux-projection}),
is assumed here; the modified flux interpolation~(\ref{eq:modified-flux-projection})
will be accounted for in Section~\ref{subsec:modified-flux-interpolation}.
Note that Equations~(\ref{eq:fully-discrete-form-average-convective-2d})
and~(\ref{eq:fully-discrete-form-average-viscous-2d}) still hold
for the quadrature-free implementation~\citep{Atk96,Atk98} used
to evaluate the integrals in Equation~(\ref{eq:semi-discrete-form})
since the integrals of the basis functions over the reference element
(required in the quadrature-free implementation) can be considered
the weights of a generalized Newton-Cotes quadrature rule~\citep{Win08}.

It can be shown that if $y_{\kappa}^{j}(x)\in\mathcal{G}_{s_{b}},\;\forall x\in\mathcal{D}_{\kappa}$
, and $y_{\kappa}^{-,j}\in\mathcal{G}_{s_{b}},\;\forall x\in\partial\mathcal{D}_{\kappa}$,
then $\overline{y}_{\kappa,c}^{j+1}$ is in $\mathcal{G}_{s_{b}}$
under the time-step-size constraint~\citep{Chi22_2}
\begin{align}
\frac{\Delta t^{*}\lambda}{|\kappa|} & \leq\frac{1}{2}\min\left\{ L_{A},L_{B},L_{C}\right\} ,\label{eq:CFL-condition-convective-2D}\\
L_{A} & =\min\left\{ \left.\frac{\theta_{f,l}}{\nu_{f,l}^{\partial}}\right|f=1,\ldots,n_{f}-1,\;l=1,\ldots,n_{q,f}^{\partial}\right\} ,\nonumber \\
L_{B} & =\min\left\{ \left.\frac{\theta_{n_{f},l}}{\nu_{f,l}^{\partial}}\frac{\left|\partial\kappa^{(f)}\right|}{|\partial\kappa|}\right|,f=1,\ldots,n_{f},\;l=1,\ldots,\min\left\{ n_{q,f}^{\partial},N-1\right\} \right\} ,\nonumber \\
L_{C} & =\frac{\theta_{n_{f},N}}{|\partial\kappa|},\nonumber
\end{align}
and the conditions
\begin{equation}
\begin{cases}
\theta_{q}\geq0, & q=1,\ldots,n_{q}\\
\theta_{f,l}>0, & f=1,\ldots,n_{f},\;l=1,\ldots,n_{q,f}^{\partial}.
\end{cases}\label{eq:theta-conditions-2D}
\end{equation}
The entropy bound, $s_{b}$, is computed as
\begin{equation}
s_{b,\kappa}^{j+1}(y)=\min\left\{ s\left(y^{j}(x)\right)\left|x\in\bigcup_{f=1}^{n_{f}}\mathcal{D}_{\kappa^{(f)}}\bigcup\mathcal{D}_{\kappa}\right.\right\} .\label{eq:local-entropy-bound-2D}
\end{equation}
In the following theorem, we analyze the conditions under which $\overline{y}_{\kappa,v}^{j+1}\in\mathcal{G}$.
\begin{thm}
\label{thm:CFL-condition-2D-viscous}If $y_{\kappa}^{j}(x)\in\mathcal{G},\;\forall x\in\mathcal{D_{\kappa}}$,
and $y_{\kappa}^{-,j}\in\mathcal{G},\;\forall x\in\partial\mathcal{D}_{\kappa}$,
then $\overline{y}_{\kappa,v}^{j+1}$ is also in $\mathcal{G}$ under
the time-step-size constraint
\begin{align}
\frac{\Delta t^{*}}{\left|\kappa\right|} & \leq\min\left\{ \left.\frac{\theta_{f,l}}{\beta_{f,l}\nu_{f,l}^{\partial}}\right|f=1,\ldots,n_{f},\;l=1,\ldots,n_{q,f}^{\partial}\right\} ,\label{eq:CFL-condition-viscous-2D}
\end{align}
the constraints on $\beta$,
\begin{align}
\beta_{f,l} & >\max\left\{ \beta_{f,l}^{(1)},\beta_{f,l}^{(2)}\right\} ,\label{eq:beta-constraint-2d}\\
\beta_{f,l}^{(1)} & =\beta^{*}\left(y_{\kappa}^{j}\left(\xi\left(\zeta_{l}^{\left(f\right)}\right)\right),\mathcal{F}^{v}\left(y_{\kappa}^{j}\left(\xi\left(\zeta_{l}^{\left(f\right)}\right)\right),\nabla y_{\kappa}^{j}\left(\xi\left(\zeta_{l}^{\left(f\right)}\right)\right)\right),n\left(\zeta_{l}^{\left(f\right)}\right)\right),\nonumber \\
\beta_{f,l}^{(2)} & =\beta^{*}\left(y_{\kappa^{(f)}}^{j}\left(\xi\left(\zeta_{l}^{\left(f\right)}\right)\right),\mathcal{F}^{v}\left(y_{\kappa^{(f)}}^{j}\left(\xi\left(\zeta_{l}^{\left(f\right)}\right)\right),\nabla y_{\kappa^{(f)}}^{j}\left(\xi\left(\zeta_{l}^{\left(f\right)}\right)\right)\right),n\left(\zeta_{l}^{\left(f\right)}\right)\right),\nonumber
\end{align}
and the conditions~(\ref{eq:theta-conditions-2D}).
\end{thm}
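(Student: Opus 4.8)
The plan is to follow the proof of Theorem~\ref{thm:CFL-condition-1D-viscous} essentially verbatim, with the one-dimensional endpoint bookkeeping replaced by a double sum over faces $f=1,\ldots,n_f$ and face-quadrature points $l=1,\ldots,n_{q,f}^\partial$. From the final expression in Equation~(\ref{eq:fully-discrete-form-average-viscous-2d}), $\overline{y}_{\kappa,v}^{j+1}$ is a linear combination of three families of states: the volume states $y_\kappa^j(\xi_q)$ with weights $\theta_q$; the neighbor states $y_{\kappa^{(f)}}^j+\beta_{f,l}^{-1}\mathcal{F}^v\cdot n$ (all traces evaluated at $\xi(\zeta_l^{(f)})$, all fluxes contracted with $n(\zeta_l^{(f)})$) with weights $\tfrac{\Delta t^*\nu_{f,l}^\partial}{2|\kappa|}\beta_{f,l}$; and the interior states $y_\kappa^j+\tfrac{\Delta t^*\nu_{f,l}^\partial}{2|\kappa|}\Lambda_{f,l}^{-1}\mathcal{F}^v\cdot n$ with weights $\Lambda_{f,l}=\theta_{f,l}-\tfrac{\Delta t^*\nu_{f,l}^\partial}{2|\kappa|}\beta_{f,l}$. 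The aim is to prove that this is a convex combination of points of $\mathcal{G}$ and then invoke the convexity of $\mathcal{G}$. (The standing assumption $C_i>0$ of this section makes Lemma~\ref{lem:beta-constraints} applicable.)

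First I would place every state appearing above in $\mathcal{G}$. The volume states lie in $\mathcal{G}$ because $\xi_q\in\mathcal{D}_\kappa$ and $y_\kappa^j(x)\in\mathcal{G}$ on $\mathcal{D}_\kappa$ by hypothesis. The neighbor states lie in $\mathcal{G}$ because $x(\zeta_l^{(f)})\in\partial\mathcal{D}_\kappa$ gives $y_{\kappa^{(f)}}^j(\xi(\zeta_l^{(f)}))\in\mathcal{G}$ by the exterior-trace hypothesis, and then Lemma~\ref{lem:beta-constraints}, which covers both signs in $y\pm\beta^{-1}\mathcal{F}^v\cdot n$, applies since $\beta_{f,l}>\beta_{f,l}^{(2)}$ by~(\ref{eq:beta-constraint-2d}). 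For the interior states, the crucial inequality---exactly as in the one-dimensional case---is that the time-step constraint~(\ref{eq:CFL-condition-viscous-2D}) rearranges to $\tfrac{\Delta t^*\nu_{f,l}^\partial}{|\kappa|}\beta_{f,l}\le\theta_{f,l}$, whence $\Lambda_{f,l}\ge\tfrac{\Delta t^*\nu_{f,l}^\partial}{2|\kappa|}\beta_{f,l}>0$ and hence $\tfrac{\Delta t^*\nu_{f,l}^\partial}{2|\kappa|}\Lambda_{f,l}^{-1}\le\beta_{f,l}^{-1}$. Setting $t=\tfrac{\Delta t^*\nu_{f,l}^\partial}{2|\kappa|}\Lambda_{f,l}^{-1}\beta_{f,l}\in(0,1]$, the interior state equals $(1-t)\,y_\kappa^j+t\,\bigl(y_\kappa^j+\beta_{f,l}^{-1}\mathcal{F}^v\cdot n\bigr)$; since $y_\kappa^j\in\mathcal{G}$ and, by Lemma~\ref{lem:beta-constraints} with $\beta_{f,l}>\beta_{f,l}^{(1)}$, also $y_\kappa^j+\beta_{f,l}^{-1}\mathcal{F}^v\cdot n\in\mathcal{G}$, convexity of $\mathcal{G}$ puts the interior state in $\mathcal{G}$.

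Next I would check that the weights form a convex combination. They are nonnegative: $\theta_q\ge0$ and $\theta_{f,l}>0$ by the conditions~(\ref{eq:theta-conditions-2D}), $\tfrac{\Delta t^*\nu_{f,l}^\partial}{2|\kappa|}\beta_{f,l}>0$ since all factors are positive, and $\Lambda_{f,l}>0$ by the bound just derived. They sum to one: $\sum_q\theta_q+\sum_{f,l}\tfrac{\Delta t^*\nu_{f,l}^\partial}{2|\kappa|}\beta_{f,l}+\sum_{f,l}\Lambda_{f,l}=\sum_q\theta_q+\sum_{f,l}\theta_{f,l}=1$ by the partition-of-unity identity noted above. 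Therefore $\overline{y}_{\kappa,v}^{j+1}$ is a convex combination of elements of $\mathcal{G}$, and $\overline{y}_{\kappa,v}^{j+1}\in\mathcal{G}$.

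I expect the difficulty to be clerical rather than conceptual, the argument being a near-transcription of Theorem~\ref{thm:CFL-condition-1D-viscous}. The points that need care are: pairing the double index $(f,l)$ correctly so that $\theta_{f,l}$, $\Lambda_{f,l}$, and $\tfrac{\Delta t^*\nu_{f,l}^\partial}{2|\kappa|}\beta_{f,l}$ telescope as claimed even when a quadrature point is shared by several faces of $\kappa$ (the factor $N_{f,l}$ in the definition of $\theta_{f,l}$ already accounts for this); boundary faces, where $y_{\kappa^{(f)}}^j$ is replaced by the boundary state and the viscous boundary flux and boundary dissipation coefficient of~\ref{sec:boundary-conditions} are constructed so that the two ingredients used above---membership in $\mathcal{G}$ and the $\beta^*$-type bound---still hold; and recording cleanly the monotonicity fact that enlarging the effective dissipation coefficient keeps a state inside the convex set $\mathcal{G}$, which is what makes the interior-state argument work.
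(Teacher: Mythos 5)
Your proposal is correct and follows essentially the same route as the paper's proof: both decompose $\overline{y}_{\kappa,v}^{j+1}$ via Equation~(\ref{eq:fully-discrete-form-average-viscous-2d}) into a convex combination of volume states, neighbor states handled by Lemma~\ref{lem:beta-constraints} under~(\ref{eq:beta-constraint-2d}), and interior states handled by showing $\frac{\Delta t^{*}\nu_{f,l}^{\partial}}{2|\kappa|}\Lambda_{f,l}^{-1}\leq\beta_{f,l}^{-1}$ from the time-step constraint, then invoke convexity of $\mathcal{G}$. Your explicit verification that the weights are nonnegative and sum to one, and your rewriting of the interior state as $(1-t)\,y+t\,\bigl(y+\beta_{f,l}^{-1}\mathcal{F}^{v}\cdot n\bigr)$, merely spell out steps the paper leaves implicit.
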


\begin{proof}
The proof follows similar logic to that for Theorem~\ref{thm:CFL-condition-1D-viscous}.
The inequality~(\ref{eq:beta-constraint-2d}) guarantees that
\[
y_{\kappa^{(f)}}^{j}\left(\xi\left(\zeta_{l}^{\left(f\right)}\right)\right)+\beta_{f,l}^{-1}\mathcal{F}^{v}\left(y_{\kappa^{(f)}}^{j}\left(\xi\left(\zeta_{l}^{\left(f\right)}\right)\right),\nabla y_{\kappa^{(f)}}^{j}\left(\xi\left(\zeta_{l}^{\left(f\right)}\right)\right)\right)\cdot n\left(\zeta_{l}^{\left(f\right)}\right)\in\mathcal{G}.
\]
According to the time-step-size constraint~(\ref{eq:CFL-condition-viscous-2D}),
we have
\[
\frac{\Delta t^{*}}{\left|\kappa\right|}\leq\frac{\theta_{f,l}}{\beta_{f,l}\nu_{f,l}^{\partial}}
\]
such that
\[
\theta_{f,l}-\frac{\Delta t^{*}\nu_{f,l}^{\partial}}{2\left|\kappa\right|}\beta_{f,l}\geq\frac{\Delta t^{*}\nu_{f,l}^{\partial}}{2\left|\kappa\right|}\beta_{f,l},
\]
for $f=1,\ldots,n_{f},\;l=1,\ldots,n_{q,f}^{\partial}$. It follows
that
\begin{align*}
\frac{\Delta t^{*}\nu_{f,l}^{\partial}}{2|\kappa|}\Lambda_{f,l}^{-1} & =\frac{\Delta t^{*}\nu_{f,l}^{\partial}}{2|\kappa|\left(\theta_{f,l}-\frac{\Delta t^{*}\nu_{f,l}^{\partial}}{2|\kappa|}\beta_{f,l}\right)}\\
 & \leq\frac{\Delta t^{*}\nu_{f,l}^{\partial}}{2|\kappa|\left(\frac{\Delta t^{*}\nu_{f,l}^{\partial}}{2\left|\kappa\right|}\beta_{f,l}\right)}\\
 & =\beta_{f,l}^{-1},
\end{align*}
which means
\[
y_{\kappa}^{j}\left(\xi\left(\zeta_{l}^{\left(f\right)}\right)\right)+\frac{\Delta t^{*}\nu_{f,l}^{\partial}}{2|\kappa|}\Lambda_{f,l}^{-1}\mathcal{F}^{v}\left(y_{\kappa}^{j}\left(\xi\left(\zeta_{l}^{\left(f\right)}\right)\right),\nabla y_{\kappa}^{j}\left(\xi\left(\zeta_{l}^{\left(f\right)}\right)\right)\right)\cdot n\left(\zeta_{l}^{\left(f\right)}\right)\in\mathcal{G}.
\]
Moreover, we have $\frac{\Delta t^{*}\nu_{f,l}^{\partial}}{2|\kappa|}\beta_{f,l}\leq\theta_{f,l}\leq1$.
Therefore, $\overline{y}_{\kappa,v}^{j+1}$ is a convex combination
of states in $\mathcal{G}$, such that $\overline{y}_{\kappa,v}^{j+1}\in\mathcal{G}$.
\end{proof}
\begin{rem}
The same limiting strategy as in the one-dimensional case is employed
to ensure $y_{\kappa,c}^{j+1}(x)\in\mathcal{G}_{s_{b}}$ and $y_{\kappa,v}^{j+1}(x)\in\mathcal{G}$,
for all $x\in\mathcal{D}_{\kappa}$, such that $y_{\kappa}^{j+1}(x)\in\mathcal{G},\;\forall x\in\mathcal{D_{\kappa}}$.
\end{rem}

\begin{rem}
The multidimensional formulation is compatible with curved elements
of arbitrary shape, provided that appropriate quadrature rules exist.
Note that the consideration of non-constant geometric Jacobians is
significantly more straightforward for the Lax-Friedrichs-type viscous
flux function than for invariant-region-preserving inviscid flux functions
since the former \emph{algebraically }satisfies the positivity property
while the latter relies on the notion of a Riemann problem. It is
worth mentioning, however, that the Lax-Friedrichs inviscid flux function
also satisfies the positivity property algebraically~\citep{Zha10,Zha17}.
\end{rem}

\subsubsection{Modified flux interpolation\label{subsec:modified-flux-interpolation}}

With the modified flux interpolation~(\ref{eq:modified-flux-projection}),
the scheme satisfied by the element averages (for the viscous contribution)
becomes

\begin{eqnarray}
\overline{y}_{\kappa,v}^{j+1} & = & \overline{y}_{\kappa}^{j}+\sum_{f=1}^{n_{f}}\sum_{l=1}^{n_{q,f}^{\partial}}\frac{\Delta t^{*}\nu_{f,l}^{\partial}}{|\kappa|}\left[\frac{1}{2}\mathcal{F}^{v}\left(\widetilde{y}_{\kappa}^{j}\left(\xi\left(\zeta_{l}^{(f)}\right)\right),\nabla y_{\kappa}^{j}\left(\xi\left(\zeta_{l}^{(f)}\right)\right)\right)\cdot n\left(\zeta_{l}^{(f)}\right)\right.\nonumber \\
 &  & +\frac{1}{2}\mathcal{F}^{v}\left(\widetilde{y}_{\kappa^{(f)}}^{j}\left(\xi\left(\zeta_{l}^{(f)}\right)\right),\nabla y_{\kappa^{(f)}}^{j}\left(\xi\left(\zeta_{l}^{(f)}\right)\right)\right)\cdot n\left(\zeta_{l}^{(f)}\right)\nonumber \\
 &  & \left.-\delta^{v}\left(\widetilde{y}_{\kappa}^{j}\left(\xi\left(\zeta_{l}^{(f)}\right)\right),\widetilde{y}_{\kappa^{(f)}}^{j}\left(\xi\left(\zeta_{l}^{(f)}\right)\right),\nabla\widetilde{y}_{\kappa}^{j}\left(\xi\left(\zeta_{l}^{(f)}\right)\right),\nabla\widetilde{y}_{\kappa^{(f)}}^{j}\left(\xi\left(\zeta_{l}^{(f)}\right)\right),n\left(\zeta_{l}^{(f)}\right)\right)\right]\nonumber \\
 & = & \sum_{q=1}^{n_{q}}\theta_{q}y_{\kappa}^{j}\left(\xi_{q}\right)+\sum_{f=1}^{n_{f}}\sum_{l=1}^{n_{q,f}^{\partial}}\left[\frac{\Delta t^{*}\nu_{f,l}^{\partial}}{2|\kappa|}\beta_{f,l}\widetilde{y}_{\kappa^{(f)}}^{j}\left(\xi\left(\zeta_{l}^{\left(f\right)}\right)\right)\right.\nonumber \\
 &  & +\frac{\Delta t^{*}\nu_{f,l}^{\partial}}{2|\kappa|}\mathcal{F}^{v}\left(\widetilde{y}_{\kappa^{(f)}}^{j}\left(\xi\left(\zeta_{l}^{\left(f\right)}\right)\right),\nabla y_{\kappa^{(f)}}^{j}\left(\xi\left(\zeta_{l}^{\left(f\right)}\right)\right)\right)\cdot n\left(\zeta_{l}^{\left(f\right)}\right)+\theta_{f,l}y_{\kappa}^{j}\left(\xi\left(\zeta_{l}^{\left(f\right)}\right)\right)\nonumber \\
 &  & \left.-\frac{\Delta t^{*}\nu_{f,l}^{\partial}}{2|\kappa|}\beta_{f,l}\widetilde{y}_{\kappa}^{j}\left(\xi\left(\zeta_{l}^{\left(f\right)}\right)\right)+\frac{\Delta t^{*}\nu_{f,l}^{\partial}}{2|\kappa|}\mathcal{F}^{v}\left(\widetilde{y}_{\kappa}^{j}\left(\xi\left(\zeta_{l}^{\left(f\right)}\right)\right),\nabla y_{\kappa}^{j}\left(\xi\left(\zeta_{l}^{\left(f\right)}\right)\right)\right)\cdot n\left(\zeta_{l}^{\left(f\right)}\right)\right]\nonumber \\
 & = & \sum_{q=1}^{n_{q}}\theta_{q}y_{\kappa}^{j}\left(\xi_{q}\right)\nonumber \\
 &  & +\sum_{f=1}^{n_{f}}\sum_{l=1}^{n_{q,f}^{\partial}}\frac{\Delta t^{*}\nu_{f,l}^{\partial}}{2|\kappa|}\beta_{f,l}\left[\widetilde{y}_{\kappa^{(f)}}^{j}\left(\xi\left(\zeta_{l}^{\left(f\right)}\right)\right)+\beta_{f,l}^{-1}\mathcal{F}^{v}\left(\widetilde{y}_{\kappa^{(f)}}^{j}\left(\xi\left(\zeta_{l}^{\left(f\right)}\right)\right),\nabla y_{\kappa^{(f)}}^{j}\left(\xi\left(\zeta_{l}^{\left(f\right)}\right)\right)\right)\cdot n\left(\zeta_{l}^{\left(f\right)}\right)\right]\nonumber \\
 &  & +\sum_{f=1}^{n_{f}}\sum_{l=1}^{n_{q,f}^{\partial}}\Lambda_{f,l}\Biggl[\acute{y}_{\kappa}^{j}\left(\xi\left(\zeta_{l}^{\left(f\right)}\right)\right)-\frac{\Delta t^{*}\nu_{f,l}^{\partial}}{2|\kappa|}\beta_{f,l}\Lambda_{f,l}^{-1}\Delta\widetilde{y}_{\kappa}^{j}\left(\xi\left(\zeta_{l}^{\left(f\right)}\right)\right)\Biggr],\label{eq:fully-discrete-form-average-viscous-2d-modified}
\end{eqnarray}
where
\[
\acute{y}_{\kappa}^{j}\left(\xi\left(\zeta_{l}^{\left(f\right)}\right)\right)=y_{\kappa}^{j}\left(\xi\left(\zeta_{l}^{\left(f\right)}\right)\right)+\frac{\Delta t^{*}\nu_{f,l}^{\partial}}{2|\kappa|}\Lambda_{f,l}^{-1}\mathcal{F}^{v}\left(\widetilde{y}_{\kappa}^{j}\left(\xi\left(\zeta_{l}^{\left(f\right)}\right)\right),\nabla y_{\kappa}^{j}\left(\xi\left(\zeta_{l}^{\left(f\right)}\right)\right)\right)\cdot n\left(\zeta_{l}^{\left(f\right)}\right)
\]
and
\[
\Delta\widetilde{y}_{\kappa}^{j}=\widetilde{y}_{\kappa}^{j}-y_{\kappa}^{j}.
\]
Under the time-step-size constraint~(\ref{eq:CFL-condition-viscous-2D})
and the conditions~(\ref{eq:theta-conditions-2D}), we have
\begin{align*}
\widetilde{y}_{\kappa^{(f)}}^{j}\left(\xi\left(\zeta_{l}^{\left(f\right)}\right)\right)+\beta_{f,l}^{-1}\mathcal{F}\left(\widetilde{y}_{\kappa^{(f)}}^{j}\left(\xi\left(\zeta_{l}^{\left(f\right)}\right)\right),\nabla\widetilde{y}_{\kappa^{(f)}}^{j}\left(\xi\left(\zeta_{l}^{\left(f\right)}\right)\right)\right)\cdot n\left(\zeta_{l}^{\left(f\right)}\right) & \in\mathcal{G},\\
\acute{y}_{\kappa}^{j}\left(\xi\left(\zeta_{l}^{\left(f\right)}\right)\right) & \in\mathcal{G},
\end{align*}
provided that the constraints on $\beta$ are modified as
\begin{align}
\beta_{f,l} & >\max\left\{ \beta_{f,l}^{(1)},\beta_{f,l}^{(2)}\right\} ,\label{eq:beta-constraint-2d-modified}\\
\beta_{f,l}^{(1)} & =\beta^{*}\left(y_{\kappa}^{j}\left(\xi\left(\zeta_{l}^{\left(f\right)}\right)\right),\mathcal{F}^{v}\left(\widetilde{y}_{\kappa}^{j}\left(\xi\left(\zeta_{l}^{\left(f\right)}\right)\right),\nabla y_{\kappa}^{j}\left(\xi\left(\zeta_{l}^{\left(f\right)}\right)\right)\right),n\left(\zeta_{l}^{\left(f\right)}\right)\right),\nonumber \\
\beta_{f,l}^{(2)} & =\beta^{*}\left(\widetilde{y}_{\kappa^{(f)}}^{j}\left(\xi\left(\zeta_{l}^{\left(f\right)}\right)\right),\mathcal{F}^{v}\left(\widetilde{y}_{\kappa^{(f)}}^{j}\left(\xi\left(\zeta_{l}^{\left(f\right)}\right)\right),\nabla y_{\kappa^{(f)}}^{j}\left(\xi\left(\zeta_{l}^{\left(f\right)}\right)\right)\right),n\left(\zeta_{l}^{\left(f\right)}\right)\right).\nonumber
\end{align}
By Lemma~\ref{lem:alpha-constraints} in~\ref{sec:supporting-lemma},
$\acute{y}_{\kappa}^{j}\left(\xi\left(\zeta_{l}^{\left(f\right)}\right)\right)-\frac{\Delta t^{*}\nu_{f,l}^{\partial}}{2|\kappa|}\beta_{f,l}\Lambda_{f,l}^{-1}\Delta\widetilde{y}_{\kappa}^{j}\left(\xi\left(\zeta_{l}^{\left(f\right)}\right)\right)\in\mathcal{G}$
if
\[
\frac{\Delta t^{*}}{2\left|\kappa\right|}\beta_{f,l}<\frac{\theta_{f,l}}{\nu_{f,l}^{\partial}\left(1+\alpha^{*}\left(\acute{y}_{\kappa}^{j}\left(\xi\left(\zeta_{l}^{\left(f\right)}\right)\right),\Delta\widetilde{y}_{\kappa}^{j}\left(\xi\left(\zeta_{l}^{\left(f\right)}\right)\right)\right)\right)},
\]
where $\alpha^{*}$ is defined as in~(\ref{eq:alpha-constraint}).
Therefore, $\overline{y}_{\kappa,v}^{j+1}$ remains a convex combination
of states in $\mathcal{G}$ if, in addition to the time-step-size
constraint~(\ref{eq:CFL-condition-viscous-2D}), the following additional
condition is satisfied:
\[
\frac{\Delta t^{*}}{2\left|\kappa\right|}<\min_{f,l}\left\{ \frac{\theta_{f,l}}{\beta_{f,l}\nu_{f,l}^{\partial}\left(1+\alpha^{*}\left(\acute{y}_{\kappa}^{j}\left(\xi\left(\zeta_{l}^{\left(f\right)}\right)\right),\Delta\widetilde{y}_{\kappa}^{j}\left(\xi\left(\zeta_{l}^{\left(f\right)}\right)\right)\right)\right)}\right\}
\]
 We then have $\overline{y}_{\kappa,v}^{j+1}\in\mathcal{G}$.

\subsection{Boundary conditions}

Thus far, $\partial\kappa$ has been assumed to be in $\mathcal{E_{I}}$,
the set of interior interfaces. Here, we discuss how to enforce boundary
conditions, focusing on the viscous contribution in the multidimensional
case. For simplicity, but without loss of generality, we assume $\partial\kappa\in\mathcal{\mathcal{E}}_{\partial}$
(i.e., all faces of $\kappa$ are boundary faces). We also assume
$y_{\partial}^{j}\in\mathcal{G},\;\forall x\in\partial\mathcal{D}_{\kappa}$.
The boundary penalty term takes the form
\[
\delta_{\partial}^{v}\left(y^{+},y_{\partial},\nabla y^{+},n^{+}\right)=\frac{\beta}{2}\left(y^{+}-y_{\partial}\right).
\]
 The scheme satisfied by the element averages (for the viscous contribution)
becomes
\begin{eqnarray}
\overline{y}_{\kappa,v}^{j+1} & = & \overline{y}_{\kappa}^{j}+\sum_{f=1}^{n_{f}}\sum_{l=1}^{n_{q,f}^{\partial}}\frac{\Delta t^{*}\nu_{f,l}^{\partial}}{|\kappa|}\left[\frac{1}{2}\mathcal{F}_{\partial}^{v}\left(y_{\partial}^{j}\left(\xi\left(\zeta_{l}^{(f)}\right)\right),\nabla y_{\kappa}^{j}\left(\xi\left(\zeta_{l}^{(f)}\right)\right)\right)\cdot n\left(\zeta_{l}^{(f)}\right)\right.\nonumber \\
 &  & +\frac{1}{2}\mathcal{F}_{\partial}^{v}\left(y_{\partial}^{j}\left(\xi\left(\zeta_{l}^{(f)}\right)\right),\nabla y_{\kappa^{(f)}}^{j}\left(\xi\left(\zeta_{l}^{(f)}\right)\right)\right)\cdot n\left(\zeta_{l}^{(f)}\right)\nonumber \\
 &  & \left.-\delta_{\partial}^{v}\left(y_{\kappa}^{j}\left(\xi\left(\zeta_{l}^{(f)}\right)\right),y_{\partial}^{j}\left(\xi\left(\zeta_{l}^{(f)}\right)\right),\nabla y_{\kappa}^{j}\left(\xi\left(\zeta_{l}^{(f)}\right)\right),\nabla y_{\partial}^{j}\left(\xi\left(\zeta_{l}^{(f)}\right)\right),n\left(\zeta_{l}^{(f)}\right)\right)\right]\nonumber \\
 & = & \sum_{q=1}^{n_{q}}\theta_{q}y_{\kappa}^{j}\left(\xi_{q}\right)+\sum_{f=1}^{n_{f}}\sum_{l=1}^{n_{q,f}^{\partial}}\left[\frac{\Delta t^{*}\nu_{f,l}^{\partial}}{2|\kappa|}\beta_{f,l}y_{\partial}^{j}\left(\xi\left(\zeta_{l}^{(f)}\right)\right)\right.\nonumber \\
 &  & +\frac{\Delta t^{*}\nu_{f,l}^{\partial}}{2|\kappa|}\mathcal{F}_{\partial}^{v}\left(y_{\partial}^{j}\left(\xi\left(\zeta_{l}^{(f)}\right)\right),\nabla y_{\kappa}^{j}\left(\xi\left(\zeta_{l}^{(f)}\right)\right)\right)\cdot n\left(\zeta_{l}^{(f)}\right)+\theta_{f,l}y_{\kappa}^{j}\left(\xi\left(\zeta_{l}^{\left(f\right)}\right)\right)\nonumber \\
 &  & \left.-\frac{\Delta t^{*}\nu_{f,l}^{\partial}}{2|\kappa|}\beta_{f,l}y_{\kappa}^{j}\left(\xi\left(\zeta_{l}^{(f)}\right)\right)+\frac{\Delta t^{*}\nu_{f,l}^{\partial}}{2|\kappa|}\mathcal{F}_{\partial}^{v}\left(y_{\partial}^{j}\left(\xi\left(\zeta_{l}^{(f)}\right)\right),\nabla y_{\kappa^{(f)}}^{j}\left(\xi\left(\zeta_{l}^{(f)}\right)\right)\right)\cdot n\left(\zeta_{l}^{\left(f\right)}\right)\right]\nonumber \\
 & = & \sum_{q=1}^{n_{q}}\theta_{q}y_{\kappa}^{j}\left(\xi_{q}\right)\nonumber \\
 &  & +\sum_{f=1}^{n_{f}}\sum_{l=1}^{n_{q,f}^{\partial}}\frac{\Delta t^{*}\nu_{f,l}^{\partial}}{2|\kappa|}\beta_{f,l}\left[y_{\partial}^{j}\left(\xi\left(\zeta_{l}^{(f)}\right)\right)+\beta_{f,l}^{-1}\mathcal{F}_{\partial}^{v}\left(y_{\partial}^{j}\left(\xi\left(\zeta_{l}^{(f)}\right)\right),\nabla y_{\kappa}^{j}\left(\xi\left(\zeta_{l}^{(f)}\right)\right)\right)\cdot n\left(\zeta_{l}^{\left(f\right)}\right)\right]\nonumber \\
 &  & +\sum_{f=1}^{n_{f}}\sum_{l=1}^{n_{q,f}^{\partial}}\Lambda_{f,l}\Biggl[y_{\kappa}^{j}\left(\xi\left(\zeta_{l}^{(f)}\right)\right)+\frac{\Delta t^{*}\nu_{f,l}^{\partial}}{2|\kappa|}\Lambda_{f,l}^{-1}\mathcal{F}_{\partial}^{v}\left(y_{\partial}^{j}\left(\xi\left(\zeta_{l}^{(f)}\right)\right),\nabla y_{\kappa}^{j}\left(\xi\left(\zeta_{l}^{(f)}\right)\right)\right)\cdot n\left(\zeta_{l}^{\left(f\right)}\right)\Biggr].\label{eq:fully-discrete-form-average-viscous-2d-boundary}
\end{eqnarray}
Under the time-step-size constraint~(\ref{eq:CFL-condition-viscous-2D})
and the conditions~(\ref{eq:theta-conditions-2D}), we have
\begin{align*}
y_{\partial}^{j}\left(\xi\left(\zeta_{l}^{(f)}\right)\right)+\beta_{f,l}^{-1}\mathcal{F}_{\partial}^{v}\left(y_{\partial}^{j}\left(\xi\left(\zeta_{l}^{(f)}\right)\right),\nabla y_{\kappa}^{j}\left(\xi\left(\zeta_{l}^{(f)}\right)\right)\right)\cdot n\left(\zeta_{l}^{\left(f\right)}\right) & \in\mathcal{G},\\
y_{\kappa}^{j}\left(\xi\left(\zeta_{l}^{(f)}\right)\right)+\frac{\Delta t^{*}\nu_{f,l}^{\partial}}{2|\kappa|}\Lambda_{f,l}^{-1}\mathcal{F}_{\partial}^{v}\left(y_{\partial}^{j}\left(\xi\left(\zeta_{l}^{(f)}\right)\right),\nabla y_{\kappa}^{j}\left(\xi\left(\zeta_{l}^{(f)}\right)\right)\right)\cdot n\left(\zeta_{l}^{\left(f\right)}\right) & \in\mathcal{G},
\end{align*}
provided that the constraints on $\beta$ are modified as
\begin{align}
\beta_{f,l} & >\max\left\{ \beta_{f,l}^{(1)},\beta_{f,l}^{(2)}\right\} ,\label{eq:beta-constraint-2d-modified-1}\\
\beta_{f,l}^{(1)} & =\beta^{*}\left(y_{\kappa}^{j}\left(\xi\left(\zeta_{l}^{\left(f\right)}\right)\right),\mathcal{F}_{\partial}^{v}\left(y_{\partial}^{j}\left(\xi\left(\zeta_{l}^{(f)}\right)\right),\nabla y_{\kappa}^{j}\left(\xi\left(\zeta_{l}^{(f)}\right)\right)\right),n\left(\zeta_{l}^{\left(f\right)}\right)\right),\nonumber \\
\beta_{f,l}^{(2)} & =\beta^{*}\left(y_{\partial}^{j}\left(\xi\left(\zeta_{l}^{(f)}\right)\right),\mathcal{F}_{\partial}^{v}\left(y_{\partial}^{j}\left(\xi\left(\zeta_{l}^{(f)}\right)\right),\nabla y_{\kappa}^{j}\left(\xi\left(\zeta_{l}^{(f)}\right)\right)\right),n\left(\zeta_{l}^{\left(f\right)}\right)\right).\nonumber
\end{align}
$\overline{y}_{\kappa,v}^{j+1}$ is then in $\mathcal{G}$, and the
same limiting strategy can be applied to ensure $y_{\kappa,v}^{j+1}\in\mathcal{G},\;\forall x\in\mathcal{D_{\kappa}}$.

\subsection{Adaptive time stepping\label{subsec:Adaptive-time-stepping}}

As discussed by Zhang~\citep{Zha17}, the time-step-size constraints~(\ref{eq:CFL-condition-convective-2D})
and~(\ref{eq:CFL-condition-viscous-2D}) are sufficient but not necessary
for $\overline{y}_{\kappa,c}^{j+1}$ and $\overline{y}_{\kappa,v}^{j+1}$
to be in $\mathcal{G}_{s_{b}}$ and $\mathcal{G}$, respectively.
Furthermore, the latter constraint can sometimes be very restrictive.
In addition, as will be demonstrated in Section~\ref{subsec:detonation-results},
provided that the positivity property remains satisfied, the BR2 viscous
flux function is often preferred to the Lax-Friedrichs-type viscous
flux function. As such, unless otherwise specified, we employ the
following adaptive time stepping procedure similar to that in~\citep{Zha17},
except with additional steps to switch between the two viscous flux
functions:
\begin{enumerate}
\item Select $\Delta t$ according to a user-prescribed CFL based on the
acoustic time scale.
\item Compute $y_{c}^{j+1}$ and $y_{v}^{j+1}$ with the BR2 scheme.
\item If $\overline{y}_{\kappa,c}^{j+1}\in\mathcal{G}_{s_{b}}$ and $\overline{y}_{\kappa,v}^{j+1}\in\mathcal{G},\:\forall\kappa$,
then employ the limiting procedure, proceed to the next time step,
and go back to Step 1. If, for some $\kappa$, $\overline{y}_{\kappa,c}^{j+1}\notin\mathcal{G}_{s_{b}}$
or $\overline{y}_{\kappa,v}^{j+1}\notin\mathcal{G}$, then proceed
to Step 4.
\item Halve the time step, and recompute $y_{c}^{j+1}$ and $y_{v}^{j+1}$
with the BR2 scheme.
\item If $\overline{y}_{\kappa,c}^{j+1}\in\mathcal{G}_{s_{b}}$ and $\overline{y}_{\kappa,v}^{j+1}\in\mathcal{G},\:\forall\kappa$,
then employ the limiting procedure, proceed to the next time step,
and go back to Step 1. If, for some $\kappa$, $\overline{y}_{\kappa,c}^{j+1}\notin\mathcal{G}_{s_{b}}$
or $\overline{y}_{\kappa,v}^{j+1}\notin\mathcal{G}$, then proceed
to Step 6.
\item Recompute $y_{c}^{j+1}$ and $y_{v}^{j+1}$ with the Lax-Friedrichs-type
viscous flux function. Go back to Step 3.
\end{enumerate}
The above assumes forward Euler time integration. With SSPRK time
integration, the solution is restarted from time step $j$ (with the
time step halved or the viscous flux function switched) if an inadmissible
state is encountered at any stage. In our experience, the initial
time step size is generally sufficiently small for $\overline{y}_{\kappa,c}^{j+1}$
and $\overline{y}_{\kappa,v}^{j+1}$ to be in $\mathcal{G}_{s_{b}}$
and $\mathcal{G}$, respectively. Here, when the viscous flux function
is switched to the Lax-Friedrichs-type function, it is employed at
all interfaces. An alternative approach is to instead use it only
at the interfaces belonging to cells with inadmissible states.

In the present study, in typically no more than ten percent of time
steps is it necessary to decrease $\Delta t$ and/or switch the viscous
flux function. Note that the BR2 scheme can sometimes result in satisfaction
of the positivity property with a larger time step size than the Lax-Friedrichs-type
viscous flux function. However, the advantage of the latter is that
Theorem~\ref{thm:CFL-condition-2D-viscous} guarantees a finite time
step size. In Sections~\ref{subsec:shock-tube} and~\ref{subsec:detonation-results},
in order to compare the BR2 and Lax-Friedrichs-type viscous flux functions,
we employ the adaptive time stepping procedure but fix the viscous
flux function to be the latter in certain simulations.

\subsection{Zero species concentrations~\label{subsec:zero-species-concentrations}}

All species concentrations have hitherto been assumed to be strictly
positive. Following Remark~\ref{rem:zero-species-concentrations},
we relax this assumption and discuss issues that may occur in the
case of zero species concentrations. Note that the presence of zero
or near-zero species concentrations is very common (and expected)
in simulations of chemically reacting flows. We then propose a strategy
to address such issues. To this end, we first rewrite Equation~(\ref{eq:fully-discrete-form-average-viscous-2d})
in terms of the $i$th species-concentration component as
\begin{equation}
\begin{aligned}\overline{C}_{i,\kappa,v}^{j+1}= & \sum_{q=1}^{n_{q}}\theta_{q}C_{i,\kappa}^{j}\left(\xi_{q}\right)\\
 & +\sum_{f=1}^{n_{f}}\sum_{l=1}^{n_{q,f}^{\partial}}\frac{\Delta t^{*}\nu_{f,l}^{\partial}}{2|\kappa|}\beta_{f,l}\grave{C}_{i,\kappa^{\left(f\right)}}^{j}\left(\xi\left(\zeta_{l}^{\left(f\right)}\right)\right)\\
 & +\sum_{f=1}^{n_{f}}\sum_{l=1}^{n_{q,f}^{\partial}}\Lambda_{f,l}\grave{C}_{i,\kappa}^{j}\left(\xi\left(\zeta_{l}^{\left(f\right)}\right)\right),
\end{aligned}
\label{eq:fully-discrete-form-average-viscous-2d-species}
\end{equation}
where
\[
\grave{C}_{i,\kappa^{\left(f\right)}}^{j}\left(\xi\left(\zeta_{l}^{\left(f\right)}\right)\right)=C_{i,\kappa^{(f)}}^{j}\left(\xi\left(\zeta_{l}^{\left(f\right)}\right)\right)+\beta_{f,l}^{-1}\mathcal{F}_{C_{i}}^{v}\left(y_{\kappa^{(f)}}^{j}\left(\xi\left(\zeta_{l}^{\left(f\right)}\right)\right),\nabla C_{\kappa^{(f)}}^{j}\left(\xi\left(\zeta_{l}^{\left(f\right)}\right)\right)\right)\cdot n\left(\zeta_{l}^{\left(f\right)}\right)
\]
and
\[
\grave{C}_{i,\kappa}^{j}\left(\xi\left(\zeta_{l}^{\left(f\right)}\right)\right)=C_{i,\kappa}^{j}\left(\xi\left(\zeta_{l}^{\left(f\right)}\right)\right)+\frac{\Delta t^{*}\nu_{f,l}^{\partial}}{2|\kappa|}\Lambda_{f,l}^{-1}\mathcal{F}_{C_{i}}^{v}\left(y_{\kappa}^{j}\left(\xi\left(\zeta_{l}^{\left(f\right)}\right)\right),\nabla C_{\kappa}^{j}\left(\xi\left(\zeta_{l}^{\left(f\right)}\right)\right)\right)\cdot n\left(\zeta_{l}^{\left(f\right)}\right),
\]
with $\mathcal{F}_{C_{i}}^{v}$, the molar flux of the $i$th species,
given by Equation~(\ref{eq:molar_flux_definition}). Note that $\mathcal{F}_{C_{i}}^{v}$
depends on the concentration gradients, but not on the momentum gradient
or the total-energy gradient. As before, $y_{\kappa}^{j}$ is assumed
to be in $\mathcal{G},\;\forall x\in\mathcal{D_{\kappa}},$ for all
$\kappa$. We make the following observations.

\begin{rem}
\label{rem:zero-concentration-issues}Following Remarks~\ref{rem:zero-species-concentrations}
and~\ref{rem:beta-not-abstract-form}, the concentration-based constraint
on $\beta$~(\ref{eq:beta-constraint-concentration}) can be undefined
for $C_{i}=0$. Even if a small positive number is added to the denominator
of the term on the RHS, $\beta$ can still be exceedingly large, which
may then necessitate extremely small $\Delta t^{*}$ and/or introduce
noticeable error in the solution. Furthermore, as an example scenario
in which $\overline{C}_{i,\kappa,v}^{j+1}<0$ regardless of $\beta$,
take $\overline{C}_{i,\kappa}^{j}=0$, $C_{i,\kappa^{(f)}}^{j}=0,\:\forall x\in\partial\mathcal{D}_{\kappa}$,
and $\mathfrak{F}<0$, where
\[
\mathfrak{F}=\sum_{f=1}^{n_{f}}\sum_{l=1}^{n_{q,f}^{\partial}}\left[\mathcal{F}_{C_{i}}^{v}\left(y_{\kappa}^{j}\left(\xi\left(\zeta_{l}^{\left(f\right)}\right)\right),\nabla C_{\kappa}^{j}\left(\xi\left(\zeta_{l}^{\left(f\right)}\right)\right)\right)+\mathcal{F}_{C_{i}}^{v}\left(y_{\kappa^{(f)}}^{j}\left(\xi\left(\zeta_{l}^{\left(f\right)}\right)\right),\nabla C_{\kappa^{(f)}}^{j}\left(\xi\left(\zeta_{l}^{\left(f\right)}\right)\right)\right)\right]\cdot n\left(\zeta_{l}^{\left(f\right)}\right).
\]
A representative schematic is given in Figure~\ref{fig:zero-species-concentration-issue}.
In this figure, defining $\kappa=\left[x_{L},x_{R}\right]$, we have
\[
\mathfrak{F}=\left.\bar{D}_{i}\frac{\partial C_{i,\kappa^{(2)}}}{\partial x_{k}}\right|_{y_{\kappa^{(2)}}\left(x_{R}\right)},
\]
which is negative. Note that there exists a small region in $\kappa^{(2)}$
with negative species concentration, which is possible since the positivity-preserving
limiter only guarantees $C_{i}\geq0$ at a finite set of points.
\begin{figure}[H]
\begin{centering}
\includegraphics[width=0.6\columnwidth]{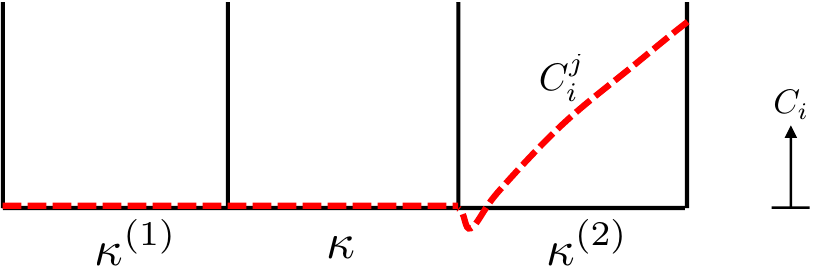}
\par\end{centering}
\caption{\label{fig:zero-species-concentration-issue}Schematic of an example
scenario in which $\overline{C}_{i,\kappa,v}^{j+1}<0$ regardless
of $\beta$.}
\end{figure}
\end{rem}

\begin{rem}
\label{rem:Cbar-p0}Suppose that $\overline{C}_{i,\kappa,v}^{j+1}<0$.
Applying the density limiter to $y_{\kappa}^{j}$ with $\omega_{\kappa}^{(1)}=0$
(which corresponds to a $p=0$ projection) and $y_{\kappa^{(f)}}^{j}$
with $\omega_{\kappa^{(f)}}^{(1)}=0,f=1,\ldots,n_{f}$ yields
\[
\begin{aligned}\overline{C}_{i,\kappa,v}^{j+1,(1)}= & \sum_{q=1}^{n_{q}}\theta_{q}\overline{C}_{i,\kappa}^{j}\left(\xi_{q}\right)\\
 & +\sum_{f=1}^{n_{f}}\sum_{l=1}^{n_{q,f}^{\partial}}\frac{\Delta t^{*}\nu_{f,l}^{\partial}}{2|\kappa|}\beta_{f,l}\overline{C}_{i,\kappa^{\left(f\right)}}^{j}\left(\xi\left(\zeta_{l}^{\left(f\right)}\right)\right)\\
 & +\sum_{f=1}^{n_{f}}\sum_{l=1}^{n_{q,f}^{\partial}}\Lambda_{f,l}\overline{C}_{i,\kappa}^{j}\left(\xi\left(\zeta_{l}^{\left(f\right)}\right)\right),
\end{aligned}
\]
which is positive.
\end{rem}

\begin{rem}
\label{rem:omega-g}$\mathcal{F}_{C_{i}}^{v}$ is directly proportional
to $\nabla C$, i.e.,
\begin{align*}
\mathcal{F}_{C_{i},k}^{v}\left(y,\omega\nabla C\right) & =\bar{D}_{i}\omega\frac{\partial C_{i}}{\partial x_{k}}-\frac{C_{i}\bar{D}_{i}}{\rho}\omega\frac{\partial\rho}{\partial x_{k}}-\frac{C_{i}}{\rho}\sum_{l=1}^{n_{s}}W_{l}\left(\bar{D}_{l}\omega\frac{\partial C_{l}}{\partial x_{k}}-\frac{C_{l}\bar{D}_{l}}{\rho}\omega\frac{\partial\rho}{\partial x_{k}}\right)\\
 & =\omega\left[\bar{D}_{i}\frac{\partial C_{i}}{\partial x_{k}}-\frac{C_{i}\bar{D}_{i}}{\rho}\frac{\partial\rho}{\partial x_{k}}-\frac{C_{i}}{\rho}\sum_{l=1}^{n_{s}}W_{l}\left(\bar{D}_{l}\frac{\partial C_{l}}{\partial x_{k}}-\frac{C_{l}\bar{D}_{l}}{\rho}\frac{\partial\rho}{\partial x_{k}}\right)\right]\\
 & =\omega\mathcal{F}_{C_{i},k}^{v}\left(y,\nabla C\right),
\end{align*}
where $\omega$ is a scaling factor for $\nabla C$.
\end{rem}

By Remark~\ref{rem:Cbar-p0}, a foolproof but low-fidelity approach
to address the issues described in Remark~\ref{rem:zero-concentration-issues}
is as follows: if $\overline{C}_{i,\kappa,v}^{j+1}<0$, then apply
the density limiter with $\omega^{(1)}=0$ and recalculate $y_{\kappa,v}^{j+1}$,
as well as the neighboring states. However, a higher-fidelity approach
is desired. Following Remark~\ref{rem:omega-g}, one such approach
is to modify the gradient in the fourth term in Equation~(\ref{eq:semi-discrete-form})
as
\begin{equation}
\left(\average{\mathcal{F}^{v}\left(y,\nabla y\right)}\cdot n-\delta^{v}\left(y,\nabla y,n\right),\left\llbracket \mathfrak{v}\right\rrbracket \right)_{\epsilon}\leftarrow\left(\average{\mathcal{F}^{v}\left(y,\omega\nabla y\right)}\cdot n-\delta^{v}\left(y,\nabla y,n\right),\left\llbracket \mathfrak{v}\right\rrbracket \right)_{\epsilon},\label{eq:flux-limit-gradient}
\end{equation}
where $\omega\in\left[0,1\right]$ is a pointwise parameter that scales
the gradient. Specifically, for a given $\xi\left(\zeta_{l}^{\left(f\right)}\right)$,
we have $\omega_{\kappa,l}^{(f)}$ and $\omega_{\kappa^{(f)},l}^{(f)}$
for the interior and exterior gradients, respectively, which yields
\begin{equation}
\grave{C}_{i,\kappa}^{j}\left(\xi\left(\zeta_{l}^{\left(f\right)}\right)\right)=C_{i,\kappa}^{j}\left(\xi\left(\zeta_{l}^{\left(f\right)}\right)\right)+\frac{\Delta t^{*}\nu_{f,l}^{\partial}}{2|\kappa|}\Lambda_{f,l}^{-1}\omega_{\kappa,l}^{(f)}\mathcal{F}_{C_{i}}^{v}\left(y_{\kappa}^{j}\left(\xi\left(\zeta_{l}^{\left(f\right)}\right)\right),\nabla C_{\kappa}^{j}\left(\xi\left(\zeta_{l}^{\left(f\right)}\right)\right)\right)\cdot n\left(\zeta_{l}^{\left(f\right)}\right)\label{eq:Cgrave-limited-gradient-kappa}
\end{equation}
and
\begin{equation}
\grave{C}_{i,\kappa^{\left(f\right)}}^{j}\left(\xi\left(\zeta_{l}^{\left(f\right)}\right)\right)=C_{i,\kappa^{(f)}}^{j}\left(\xi\left(\zeta_{l}^{\left(f\right)}\right)\right)+\beta_{f,l}^{-1}\omega_{\kappa^{(f)},l}^{(f)}\mathcal{F}_{C_{i}}^{v}\left(y_{\kappa^{(f)}}^{j}\left(\xi\left(\zeta_{l}^{\left(f\right)}\right)\right),\nabla C_{\kappa^{(f)}}^{j}\left(\xi\left(\zeta_{l}^{\left(f\right)}\right)\right)\right)\cdot n\left(\zeta_{l}^{\left(f\right)}\right).\label{eq:Cgrave-limited-gradient-kappa-f}
\end{equation}
This is akin to applying the linear-scaling limiter in Section~\ref{subsec:limiting-procedure}
to only the gradient.  In order to guarantee $\overline{C}_{i,\kappa,v}^{j+1}\geq0$,
$\omega_{\kappa,l}^{(f)}$ and $\omega_{\kappa^{(f)},l}^{(f)}$ in
Equations~(\ref{eq:Cgrave-limited-gradient-kappa}) and~(\ref{eq:Cgrave-limited-gradient-kappa-f})
can be prescribed as
\begin{equation}
\omega_{\kappa,l}^{(f)}=\min_{i}\omega_{i,\kappa,l}^{(f)},\quad\omega_{i,\kappa,l}^{(f)}=\begin{cases}
1, & Q_{i,\kappa,l}^{(f)}\geq0,\\
-\frac{\frac{1}{2\sum_{f}n_{q,f}^{\partial}}\sum_{q=1}^{n_{q}}\theta_{q}C_{i,\kappa}^{j}\left(\xi_{q}\right)+\Lambda_{f,l}C_{i,\kappa}^{j}\left(\xi\left(\zeta_{l}^{\left(f\right)}\right)\right)}{\frac{\Delta t^{*}\nu_{f,l}^{\partial}}{2|\kappa|}\mathcal{F}_{C_{i}}^{v}\left(y_{\kappa}^{j}\left(\xi\left(\zeta_{l}^{\left(f\right)}\right)\right),\nabla C_{\kappa}^{j}\left(\xi\left(\zeta_{l}^{\left(f\right)}\right)\right)\right)\cdot n\left(\zeta_{l}^{\left(f\right)}\right)}, & \mathrm{otherwise},
\end{cases}\label{eq:omega-f-kappa}
\end{equation}
and
\begin{equation}
\omega_{\kappa^{(f)},l}^{(f)}=\min_{i}\omega_{i,\kappa^{(f)},l}^{(f)},\quad\omega_{i,\kappa^{(f)},l}^{(f)}=\begin{cases}
1, & Q_{i,\kappa^{(f)},l}^{(f)}\geq0,\\
-\frac{\frac{1}{2\sum_{f}n_{q,f}^{\partial}}\sum_{q=1}^{n_{q}}\theta_{q}C_{i,\kappa}^{j}\left(\xi_{q}\right)+\frac{\Delta t^{*}\nu_{f,l}^{\partial}}{2|\kappa|}\beta_{f,l}C_{i,\kappa^{\left(f\right)}}^{j}\left(\xi\left(\zeta_{l}^{\left(f\right)}\right)\right)}{\frac{\Delta t^{*}\nu_{f,l}^{\partial}}{2|\kappa|}\mathcal{F}_{C_{i}}^{v}\left(y_{\kappa^{(f)}}^{j}\left(\xi\left(\zeta_{l}^{\left(f\right)}\right)\right),\nabla C_{\kappa^{(f)}}^{j}\left(\xi\left(\zeta_{l}^{\left(f\right)}\right)\right)\right)\cdot n\left(\zeta_{l}^{\left(f\right)}\right)}, & \mathrm{otherwise},
\end{cases}\label{eq:omega-f-kappa-f}
\end{equation}
respectively, where $Q_{i,\kappa,l}^{(f)}$ and $Q_{i,\kappa^{(f)},l}^{(f)}$
are defined as
\begin{align*}
Q_{i,\kappa,l}^{(f)}= & \frac{1}{2\sum_{f}n_{q,f}^{\partial}}\sum_{q=1}^{n_{q}}\theta_{q}C_{i,\kappa}^{j}\left(\xi_{q}\right)+\Lambda_{f,l}C_{i,\kappa}^{j}\left(\xi\left(\zeta_{l}^{\left(f\right)}\right)\right)\\
 & +\frac{\Delta t^{*}\nu_{f,l}^{\partial}}{2|\kappa|}\mathcal{F}_{C_{i}}^{v}\left(y_{\kappa}^{j}\left(\xi\left(\zeta_{l}^{\left(f\right)}\right)\right),\nabla C_{\kappa}^{j}\left(\xi\left(\zeta_{l}^{\left(f\right)}\right)\right)\right)\cdot n\left(\zeta_{l}^{\left(f\right)}\right),\\
Q_{i,\kappa^{(f)},l}^{(f)}= & \frac{1}{2\sum_{f}n_{q,f}^{\partial}}\sum_{q=1}^{n_{q}}\theta_{q}C_{i,\kappa}^{j}\left(\xi_{q}\right)+\frac{\Delta t^{*}\nu_{f,l}^{\partial}}{2|\kappa|}\beta_{f,l}C_{i,\kappa^{\left(f\right)}}^{j}\left(\xi\left(\zeta_{l}^{\left(f\right)}\right)\right)\\
 & +\frac{\Delta t^{*}\nu_{f,l}^{\partial}}{2|\kappa|}\mathcal{F}_{C_{i}}^{v}\left(y_{\kappa^{(f)}}^{j}\left(\xi\left(\zeta_{l}^{\left(f\right)}\right)\right),\nabla C_{\kappa^{(f)}}^{j}\left(\xi\left(\zeta_{l}^{\left(f\right)}\right)\right)\right)\cdot n\left(\zeta_{l}^{\left(f\right)}\right),
\end{align*}
such that $\overline{C}_{i,\kappa,v}^{j+1}$ in Equation~(\ref{eq:fully-discrete-form-average-viscous-2d-species})
can be rewritten as
\[
\overline{C}_{i,\kappa,v}^{j+1}=\sum_{f=1}^{n_{f}}\sum_{l=1}^{n_{q,f}^{\partial}}Q_{i,\kappa,l}^{(f)}+\sum_{f=1}^{n_{f}}\sum_{l=1}^{n_{q,f}^{\partial}}Q_{i,\kappa^{(f)},l}^{(f)}.
\]
$\beta_{f,l}$ can then be prescribed using only the $\beta_{T}$
constraint~(\ref{eq:beta_T}) (i.e., the constraint~(\ref{eq:beta-constraint-concentration})
can be ignored); furthermore, $\beta_{f,l}$ does not need to be recomputed
since $\mathcal{F}^{v}\left(y,\omega\nabla y\right)=G\left(y\right):\omega\nabla y=\omega\mathcal{F}^{v}\left(y,\nabla y\right)$
and $\beta^{*}\left(y,\mathcal{F}^{v}\left(y,\nabla y\right),n\right)\geq\beta^{*}\left(y,\omega\mathcal{F}^{v}\left(y,\nabla y\right),n\right)=\omega\beta^{*}\left(y,\mathcal{F}^{v}\left(y,\nabla y\right),n\right)$,
for $\omega\in\left[0,1\right]$. It should also be noted that the
constraint~(\ref{eq:beta-constraint-concentration}) can be overly
restrictive, such that nonnegativity of species concentrations can
often be maintained even if the constraint~(\ref{eq:beta-constraint-concentration})
is neglected and $\omega_{\kappa,l}^{(f)}=\omega_{\kappa^{(f)},l}^{(f)}=1$
for all $l,f$. I

In this study, we limit the gradient at interfaces belonging to cells
for which $\Delta t$ needs to be halved three or more times for $\overline{C}_{i,\kappa,v}^{j+1}\geq0$.
The time-step size required to ensure nonnegative species concentrations
can be computed algebraically by rewriting Equation~(\ref{eq:fully-discrete-form-average-viscous-2d})
in terms of species concentrations and setting the LHS to zero. Note
that the need to limit the gradient is extremely rare. In this work,
gradient limiting is applied only in early time steps in certain two-dimensional
detonation simulations, which will be further discussed in Section~\ref{subsec:detonation-results}.
An alternative to (\ref{eq:flux-limit-gradient}) is to instead apply
the density limiter in Section~\ref{subsec:limiting-procedure} to
the state and modify the fourth term in Equation~(\ref{eq:semi-discrete-form})
as
\begin{equation}
\left(\average{\mathcal{F}^{v}\left(y,\nabla y\right)}\cdot n-\delta^{v}\left(y,\nabla y,n\right),\left\llbracket \mathfrak{v}\right\rrbracket \right)_{\epsilon}\leftarrow\left(\average{\mathcal{F}^{v}\left(\ddot{y},\nabla\ddot{y}\right)}\cdot n-\delta^{v}\left(\ddot{y},\nabla\ddot{y},n\right),\left\llbracket \mathfrak{v}\right\rrbracket \right)_{\epsilon},\label{eq:flux-limit-state}
\end{equation}
where
\[
\ddot{y}=\left(\rho v_{1},\ldots,\rho v_{d},\rho e_{t},\ddot{C_{1}},\ldots,\ddot{C}_{n_{s}}\right),\quad\ddot{C_{i}}=\overline{C}_{i}+\omega\left(C_{i}-\overline{C}_{i}\right).
\]
However, iteration would then be required to determine $\omega_{\kappa,l}^{(f)}$
and $\omega_{\kappa^{(f)},l}^{(f)}$ such that $\overline{C}_{i,\kappa,v}^{j+1}\geq0$.

\section{Results\label{sec:results}}

We consider three one-dimensional test cases: advection-diffusion
of a thermal bubble, a premixed flame, and viscous shock-tube flow.
Next, we compute two multidimensional reacting flows: a moving detonation
wave enclosed by adiabatic walls and shock/mixing-layer interaction.
Unless otherwise specified, the adaptive time stepping strategy described
in Section~\ref{subsec:Adaptive-time-stepping} with second-order
strong-stability-preserving Runge-Kutta method (SSPRK2)~\citep{Got01,Spi02}
is employed. All simulations are performed using a modified version
of the JENRE\textregistered~Multiphysics Framework~\citep{Cor18_SCITECH,Joh20_2}
that incorporates the developments and extensions described in this
work.

\subsection{One-dimensional thermal bubble advection-diffusion\label{subsec:thermal-bubble}}

In this problem, we assess the order of accuracy of the positivity-preserving
and entropy-bounded DG formulation (without artificial viscosity).
The computational domain is $\Omega=[-25,25]\:\mathrm{m}$. Periodicity
is imposed at the left and right boundaries. The initial conditions
are given by

\begin{eqnarray}
v_{1} & = & 1\textrm{ m/s},\nonumber \\
Y_{H_{2}} & = & \frac{1}{2}\left[1-\tanh\left(|x|-10\right)\right],\nonumber \\
Y_{O_{2}} & = & 1-Y_{H_{2}},\label{eq:thermal-bubble}\\
T & = & 1200-900\tanh\left(|x|-10\right)\textrm{ K},\nonumber \\
P & = & 1\textrm{ bar}.\nonumber
\end{eqnarray}
In~\citep{Joh20_2}, optimal convergence without any additional stabilization,
including limiting, was demonstrated. In~\citep{Chi22}, we showed
optimal convergence from $p=1$ to $p=3$ using the positivity-preserving,
entropy-bounded DG method for \emph{inviscid}, reacting flows. Four
element sizes were considered: $h$, $h/2$, $h/4$, and $h/8$, where
$h=2\:\mathrm{m}$. The limiters were not activated when finer meshes
were employed. Here, we repeat this investigation in the viscous setting.
The thermodynamic relations can be found in~\citep[Section 8.1]{Chi22}.
Instead of the adaptive time stepping strategy described in Section~\ref{subsec:Adaptive-time-stepping},
we separately consider both viscous flux functions with fixed $\mathrm{CFL}=0.1$
to minimize temporal errors. The ``exact'' solution is obtained
with $p=3$ and $h/256$. The $L^{2}$ error at $t=5\:\mathrm{s}$
is computed in terms of the normalized state variables,
\[
\widehat{\rho v}_{k}=\frac{1}{\sqrt{\rho_{r}P_{r}}}\rho v_{k},\quad\widehat{\rho e}_{t}=\frac{1}{P_{r}}\rho e_{t},\quad\widehat{C}_{i}=\frac{R^{0}T_{r}}{P_{r}}C_{i},
\]
where $\rho_{r}=1\,\mathrm{kg\cdot}\mathrm{m}^{-3}$, $T_{r}=1000\,\mathrm{K}$,
and $P_{r}=101325\,\mathrm{Pa}$. Figure~\ref{fig:thermal-bubble-convergence}
shows the convergence results for both viscous flux functions. The
theoretical convergence rates are denoted with dashed lines. The ``$\times$''
symbol indicates that the positivity-preserving limiter is activated,
the ``$\Circle$'' symbol indicates that the entropy limiter is
activated, and the ``$\triangle$'' symbol indicates that neither
limiter is activated. If both limiters are activated, then the corresponding
symbols are superimposed as ``$\otimes$''. The results are extremely
similar between the two viscous flux functions. Apart from the coarser
grids with $p=1$, which are likely outside the asymptotic regime,
optimal convergence is demonstrated. For $h$ and $h/2$, both limiters
are activated across all $p$; for $h/4$ and $p=1$, only the positivity-preserving
limiter is activated. At higher resolutions, the limiters are not
engaged since the solutions are fairly well-resolved.
\begin{figure}[H]
\subfloat[\label{fig:thermal_bubble_convergence_diffusion_BR2}BR2 viscous flux
function.]{\includegraphics[width=0.45\columnwidth]{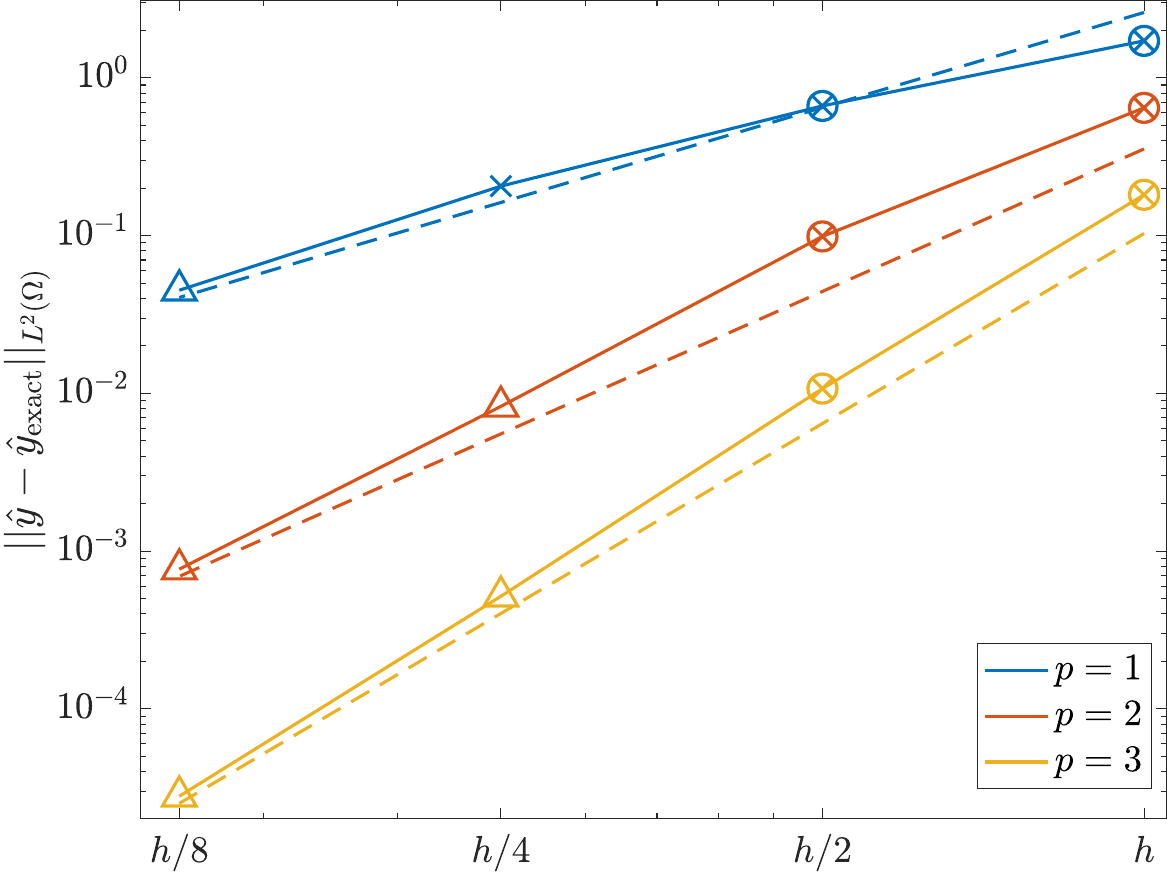}}\hfill{}\subfloat[\label{fig:thermal_bubble_convergence_diffusion_LLF}Lax-Friedrichs-type
viscous flux function.]{\includegraphics[width=0.45\columnwidth]{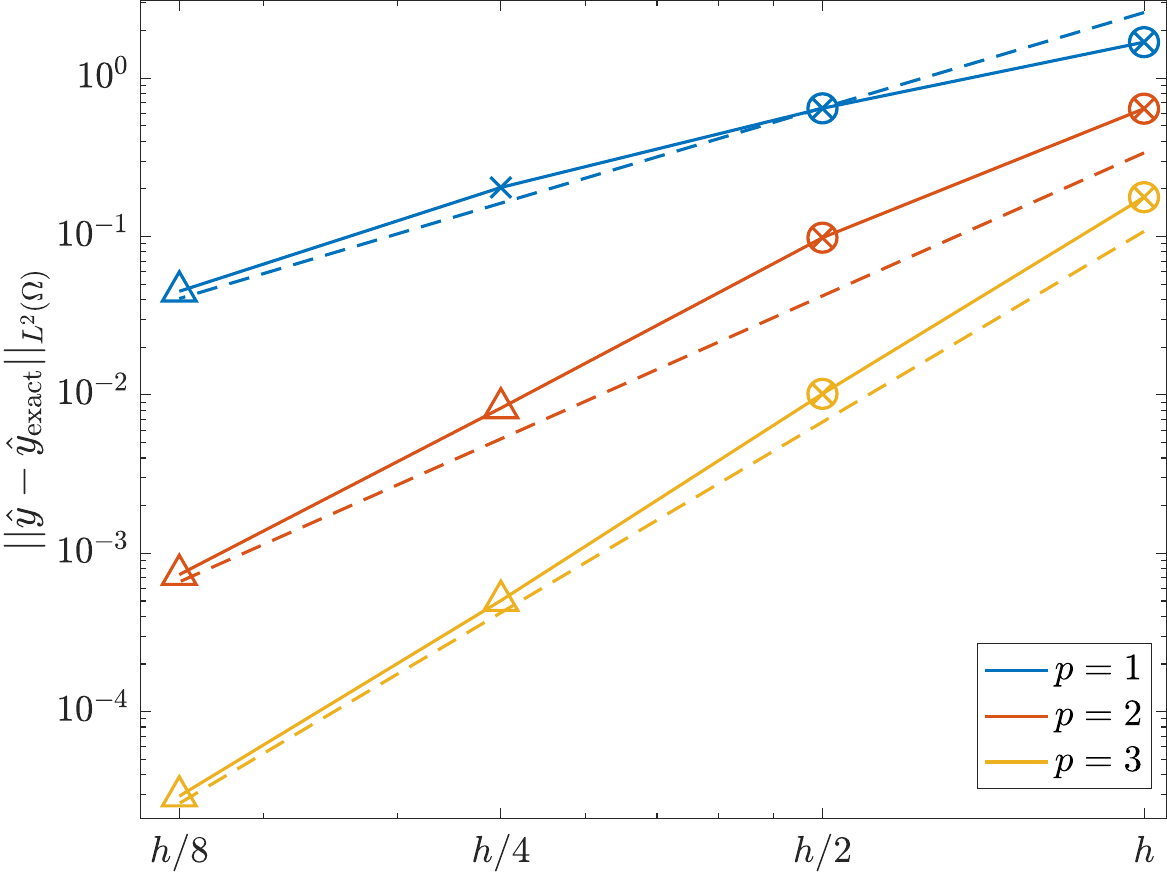}}

\caption{\label{fig:thermal-bubble-convergence} Convergence under grid refinement,
with $h=2\:\mathrm{m}$, for the one-dimensional thermal bubble test
case. The $L^{2}$ error of the normalized state with respect to the
exact solution at $t=5\:\mathrm{s}$ is computed. The dashed lines
represent the theroretical convergence rates. The ``$\times$''
symbol indicates that the positivity-preserving limiter is activated,
the ``$\Circle$'' symbol indicates that the entropy limiter is
activated, and the ``$\triangle$'' symbol indicates that neither
limiter is activated. If both limiters are activated, then the corresponding
symbols are superimposed as ``$\otimes$''.}
\end{figure}

\subsection{One-dimensional premixed flame~\label{subsec:1d-flame}}

In this problem, we consider a smooth, viscous flow with chemical
reactions. A freely propagating flame is calculated in Cantera~\citep{cantera}
on a 1 cm long grid using the left state in Equations~(\ref{eq:freely_propagating_flame_1})
and~(\ref{eq:freely_propagating_flame_2}) below. The computational
domain is $\Omega=[0,0.01]\text{ m}$. For the DG calculations, we
generate a mesh that contains a refinement zone between 1.8 mm and
2.5 mm with grid spacing $h=150$~$\mu$m, a target size that is
150 times larger than the smallest grid spacing from the resulting
refinement procedure in Cantera. The mesh transitions to a spacing
of $500$~$\mu$m at the boundaries. Note that the Cantera solution,
unlike the DG solution, assumes constant pressure. The objective here
is to ignite the flame and establish a solution in which the flame
anchors itself in the fine region of the one-dimensional mesh. The
initial conditions are given by

\begin{eqnarray}
\left(v_{1},T,P\right) & = & \begin{cases}
\left(9.53\text{ m/s},2122\text{ K},1\text{ atm}\right), & x\geq2.5\text{ mm}\\
\left(1.53\text{ m/s},300\text{ K},1\text{ atm}\right), & x<2.5\text{ mm}
\end{cases},\label{eq:freely_propagating_flame_1}
\end{eqnarray}
with mass fractions
\begin{equation}
\begin{split}\left(Y_{H_{2}},Y_{O_{2}},Y_{N_{2}},Y_{H},Y_{O}\right)= & \begin{cases}
\left(7\times10^{-5},0.0572,0.745,4.2\times10^{-6},2.2\times10^{-4}\right), & x\geq2.5\text{ mm}\\
\left(0.023,0.24,0.737,0,0\right), & x<2.5\text{ mm}
\end{cases}\\
\left(Y_{OH},Y_{H_{2}O},Y_{HO_{2}},Y_{H_{2}O_{2}}\right)= & \begin{cases}
\left(2.7\times10^{-4},0.194,3\times10^{-6},2.1\times10^{-7}\right), & x\geq2.5\text{ mm}\\
\left(0,0,0,0\right), & x<2.5\text{ mm}
\end{cases}
\end{split}
.\label{eq:freely_propagating_flame_2}
\end{equation}
The right state corresponding to $x\geq2.5\text{ mm}$ is the final
fully reacted state from the Cantera solution. The left boundary condition
is a characteristic inflow condition that allows pressure waves to
leave the domain. The right boundary is a reflective outflow condition
with the pressure set to 1 atm. The chemical mechanism used here is
based on the Westbrook mechanism and can be found in~\citep[Appendix D]{Chi22_2}.

We perform $p=1$ and $p=3$ calculations without artificial viscosity.
A $p=1$ solution with conventional species clipping (instead of the
positivity-preserving and entropy limiters), in which negative species
concentrations are simply set to zero, is also computed. The default
CFL is set to 0.4. In the beginning of the simulation, the states
on both sides of the discontinuity immediately diffuse to form a smooth
profile. As the reactions progress, the flame accelerates against
the right-moving reactants and then slows down to the flame speed.
 These initial transient processes can cause slight movement of the
flame at early times, especially in coarser solutions, and are not
directly accounted for in the Cantera simulation; therefore, in the
comparisons below, the Cantera solution is shifted such that temperature
is equal to 1000 K at the same point as the corresponding DG solution.

Figure~\ref{fig:diffusion_flame_clipping} shows instantaneous solutions
at $t=0.005$~s for $p=1$ with species clipping. Clear discrepancies
between the solution and the Cantera solution are observed. The velocity
and the mass fractions of $\mathrm{OH}$ and $\mathrm{H_{2}O_{2}}$
are overpredicted.

The $p=1$ and $p=3$ results obtained with the proposed positivity-preserving,
entropy-bounded methodology are given in Figures~\ref{fig:diffusion_limited_p1}
and~\ref{fig:diffusion_limited_p3}, respectively. Although the $p=1$
solution does not fully capture the species profiles and overpredicts
the velocity, it agrees much more closely with the Cantera solution
than the $p=1$ solution obtained with species clipping. The velocity
and mass-fraction profiles of the $p=3$ solution are in noticeably
better agreement with those of the Cantera solution than the $p=1$
solution. These results illustrate the benefits of employing the
proposed positivity-preserving, entropy-bounded DG formulation.

\begin{figure}[H]
\subfloat[Temperature.]{\includegraphics[width=0.31\columnwidth]{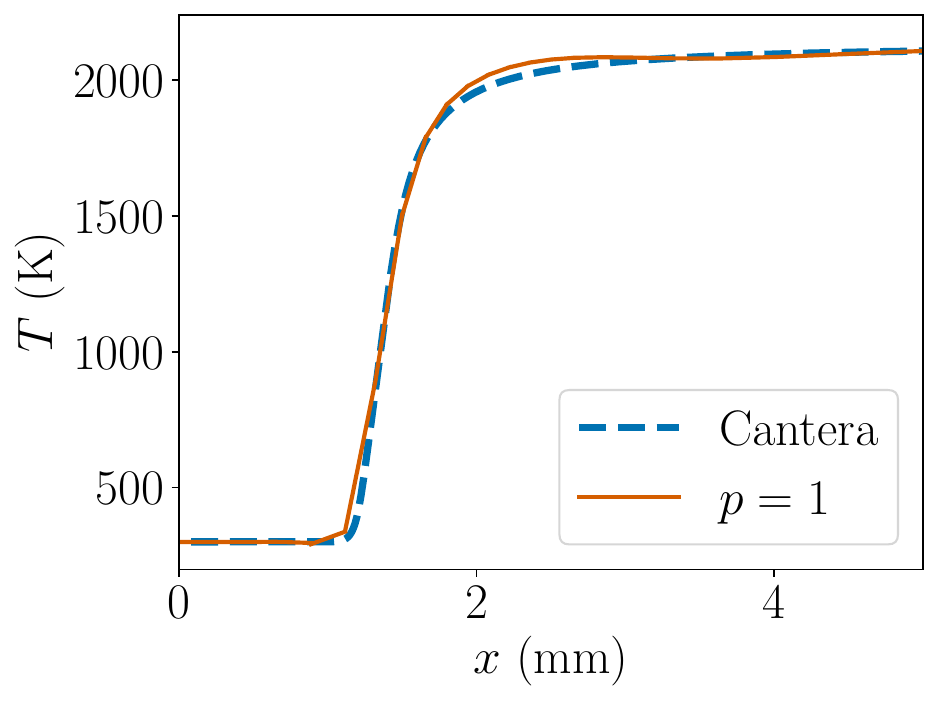}}\hfill{}\subfloat[Velocity.]{\includegraphics[width=0.29\columnwidth]{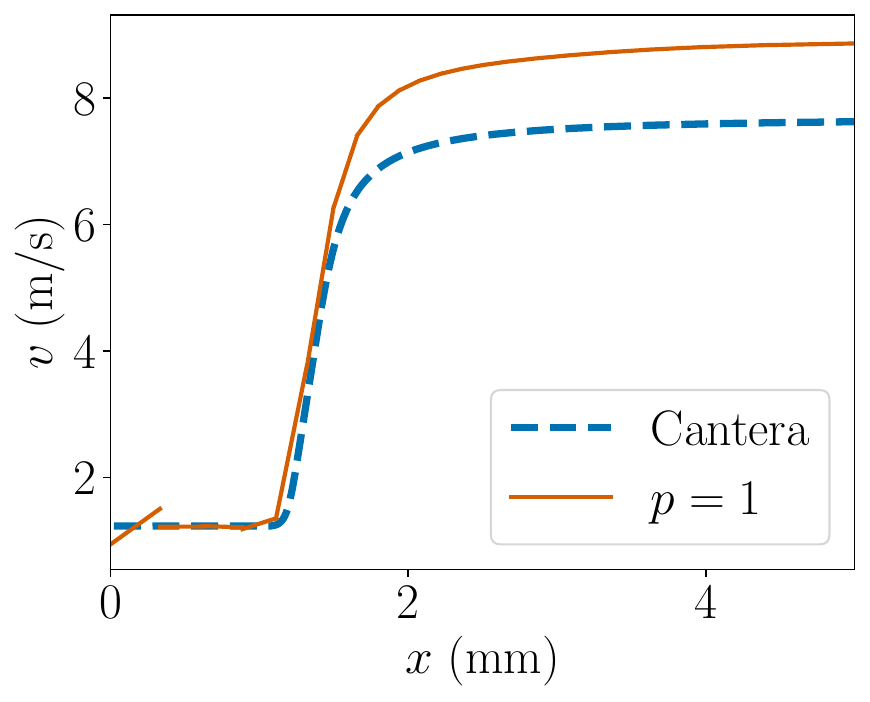}}\hfill{}\subfloat[Mass fractions.]{\includegraphics[width=0.36\columnwidth]{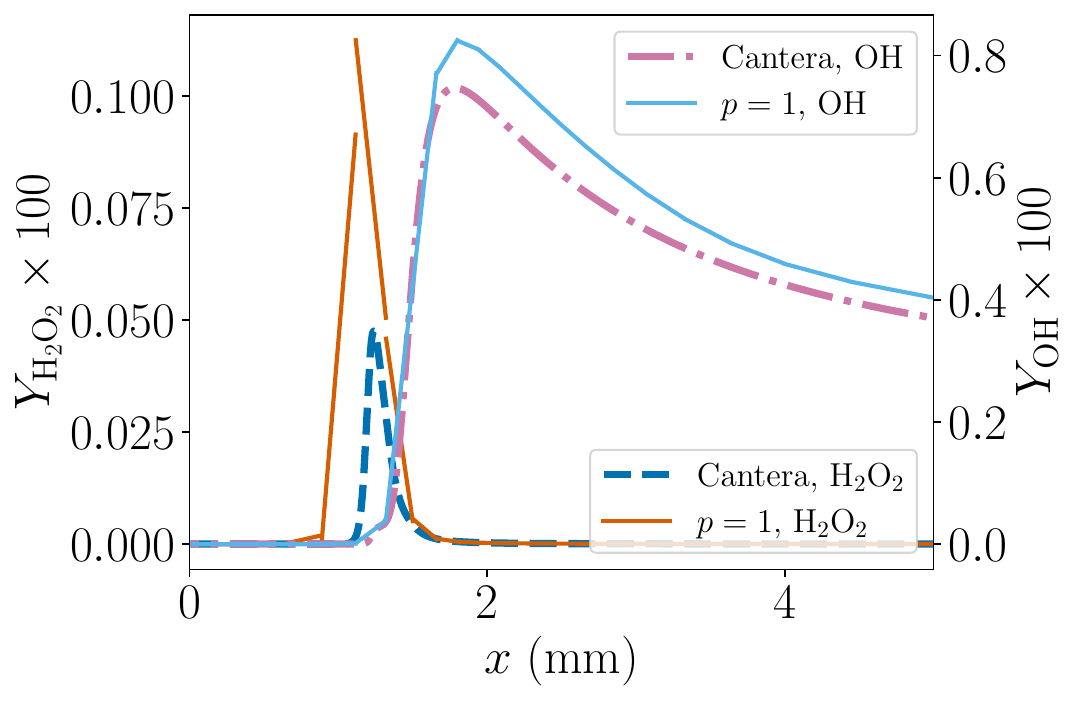}}

\caption{\label{fig:diffusion_flame_clipping}$p=1$ solution to a one-dimensional
premixed flame at $t=0.005$~s obtained with species clipping.}
\end{figure}

\begin{figure}[H]
\subfloat[Temperature.]{\includegraphics[width=0.31\columnwidth]{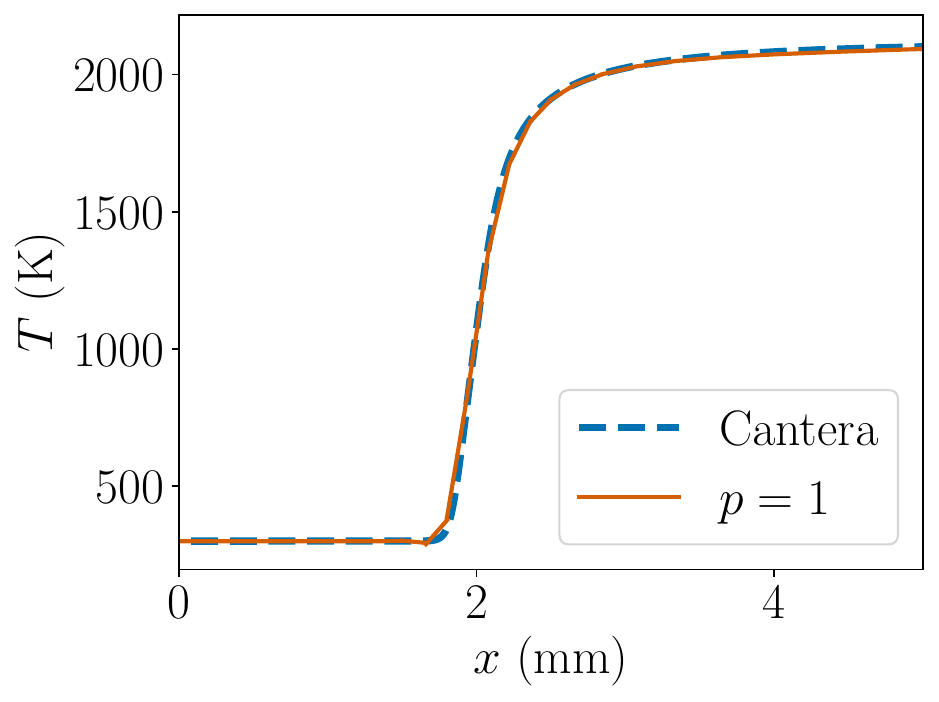}}\hfill{}\subfloat[Velocity.]{\includegraphics[width=0.29\columnwidth]{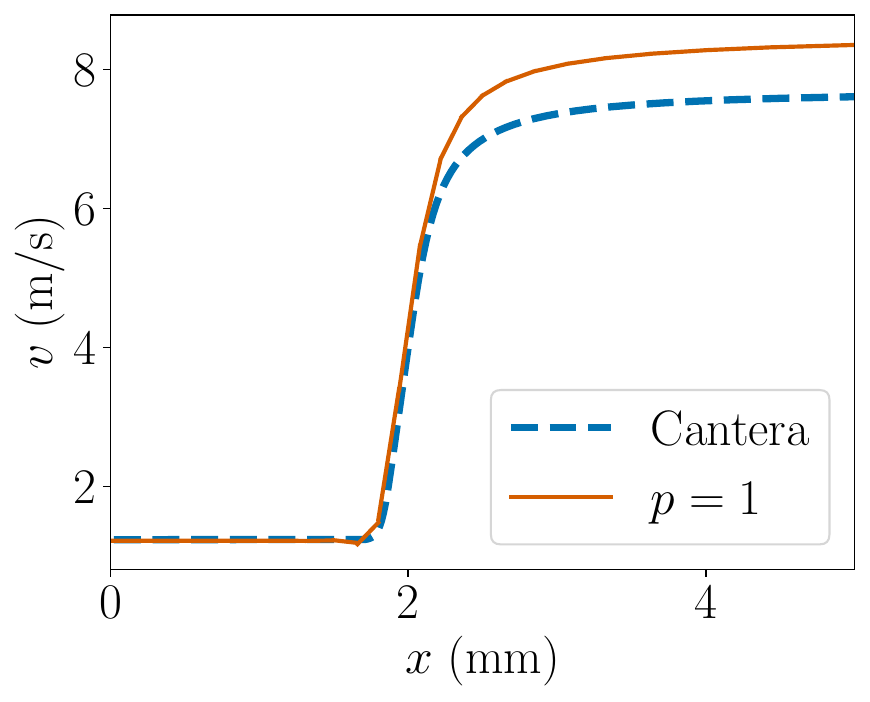}}\hfill{}\subfloat[Mass fractions.]{\includegraphics[width=0.36\columnwidth]{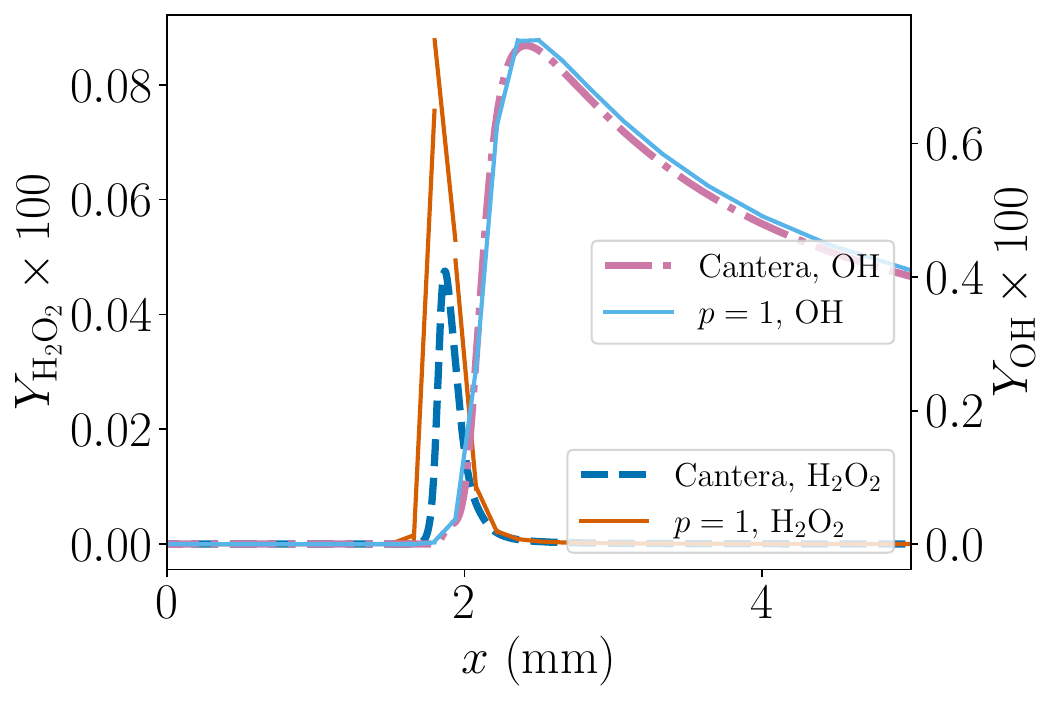}}

\caption{\label{fig:diffusion_limited_p1}$p=1$ solution to a one-dimensional
premixed flame at $t=0.005$~s obtained with the proposed positivity-preserving
and entropy-bounded formulation.}
\end{figure}
\begin{figure}[H]
\subfloat[Temperature.]{\includegraphics[width=0.31\columnwidth]{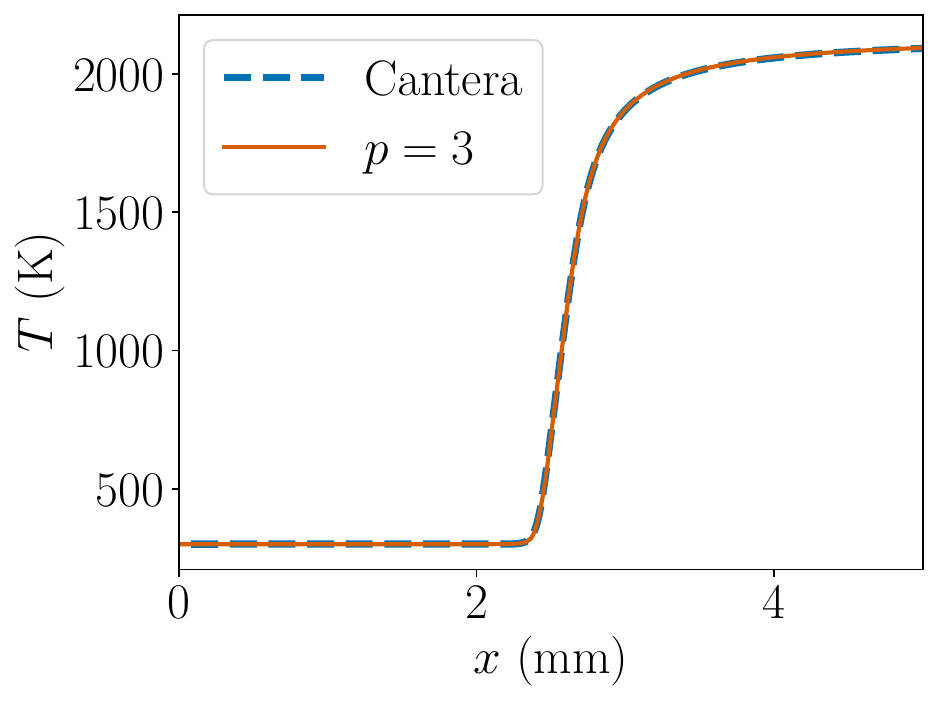}}\hfill{}\subfloat[Velocity.]{\includegraphics[width=0.29\columnwidth]{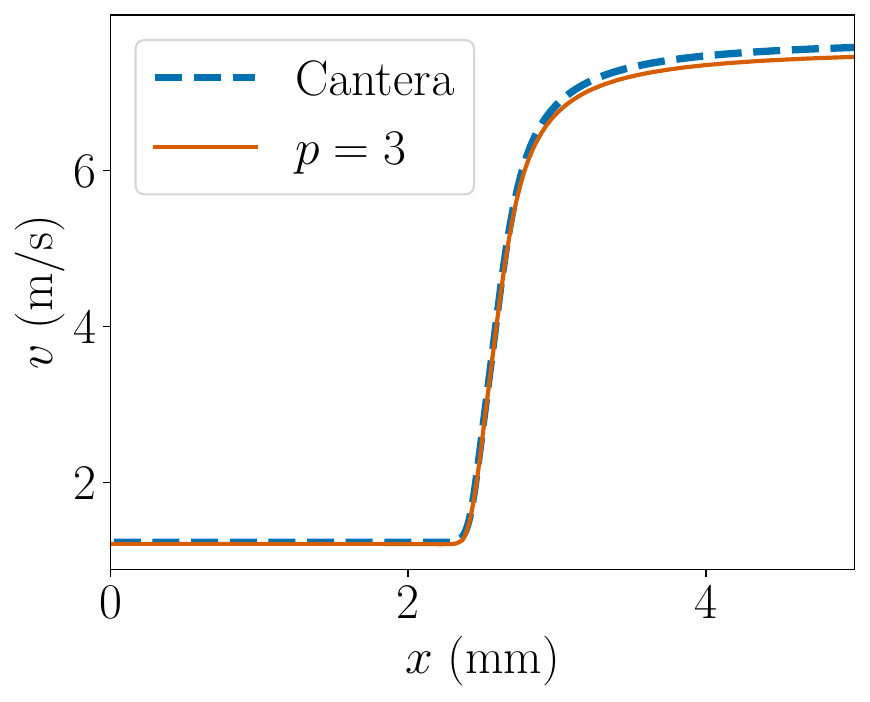}}\hfill{}\subfloat[Mass fractions.]{\includegraphics[width=0.36\columnwidth]{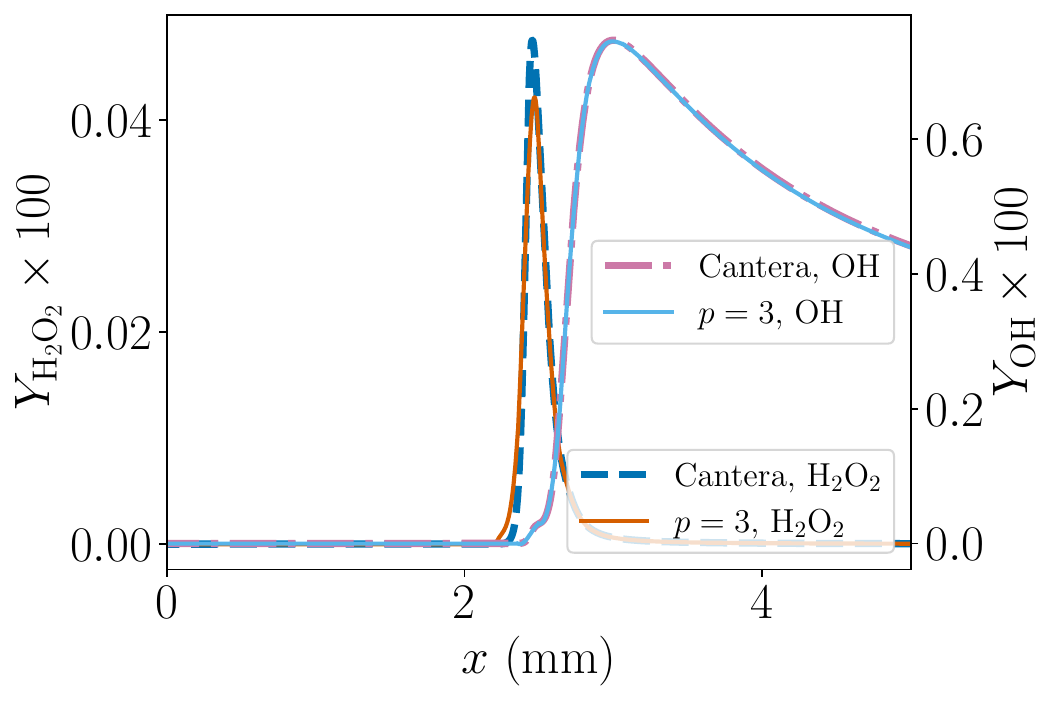}}

\caption{\label{fig:diffusion_limited_p3}$p=3$ solution to a one-dimensional
premixed flame at $t=0.005$~s obtained with the proposed positivity-preserving
and entropy-bounded formulation.}
\end{figure}

\subsection{One-dimensional shock tube\label{subsec:shock-tube}}

This test case was computed without viscous effects by Houim and Kuo~\citep{Hou11},
by Johnson and Kercher~\citep{Joh20_2}, and in our previous work~\citep{Chi22},
where we showed that (a) instabilities in the multicomponent, thermally
perfect case are much greater than in the monocomponent, calorically
perfect case and (b) enforcement of an entropy bound suppresses large-scale
nonphysical oscillations much more effectively than enforcement of
the positivity property. Our goals here are to investigate whether
these observations hold in the viscous setting and to further compare
the BR2 and Lax-Friedrichs-type viscous flux functions. The computational
domain is $\Omega=[0,1]\text{ m}$, and the final time is $t=300\;\mu\mathrm{s}$.
Walls are imposed at the left and right boundaries. The initial conditions
are written as
\begin{equation}
\left(v_{1},T,P,Y_{N_{2}},Y_{He}\right)=\begin{cases}
\left(0\text{ m/s},300\text{ K},1\text{ atm},1,0\right), & x\geq0.4\\
\left(0\text{ m/s},300\text{ K},10\text{ atm},0,1\right), & x<0.4
\end{cases}.\label{eq:shock-tube-IC-Houim}
\end{equation}
For consistency with~\citep{Chi22}, the default CFL is set to 0.1.
For the remainder of this subsection, ``BR2'' refers to the adaptive
time stepping strategy exactly as described in Section~\ref{subsec:Adaptive-time-stepping},
whereas ``LLF'' refers to a similar time stepping strategy, but
with the viscous flux function fixed to be the local Lax-Friedrichs-type
flux function. In addition, ``PPL'' corresponds to only the positivity-preserving
limiter, while ``EL'' corresponds to both the positivity-preserving
and entropy limiters. Based on~\citep{Joh20_2} and~\citep{Chi22},
a reference solution is computed using $p=2$, 2000 elements, artificial
viscosity, BR2, and EL. All other solutions are computed using $p=3$
and 200 elements. The thermodyamic relations can be found in~\citep[Section 8.3]{Chi22}.

Figure~\ref{fig:shock_tube_BR2} shows the mass fraction, pressure,
temperature, and entropy profiles obtained with BR2. Except for the
reference solution, artificial viscosity is not employed in order
to isolate the effects of the limiters. Note that the linear-scaling
limiters alone are not expected to eliminate small-scale spurious
oscillations~\citep{Zha10,Jia18,Lv15_2,Wu21_2}. The results are
very similar to those in the inviscid case~\citep{Chi22}. The species
profiles are well-captured using both types of limiting. The entropy
limiter dampens large-scale instabilities in the pressure, temperature,
and entropy distributions significantly better than the positivity-preserving
limiter. Furthermore, just as observed in~\citep{Chi22}, the instabilities
still present with the positivity-preserving limiter are substantially
larger than those usually present in monocomponent, calorically perfect
shock-tube solutions computed with the positivity-preserving limiter~\citep{Zha10,Zha12_2,Zha17},
and the relative advantage of applying the entropy limiter is much
greater. The addition of artificial viscosity would greatly suppress
the small-scale instabilities; for brevity, such results are not included
here, but they are very similar to those in~\citep{Chi22}. At the
same time, artificial viscosity alone (without the limiters) results
in negative concentrations and other instabilities, thus motivating
a combination of the two stabilization mechanisms. The corresponding
LLF results are given in Figure~\ref{fig:shock_tube_LLF}, which
are very similar to the BR2 results. However, the temperature overshoot
at the shock is noticeably smaller in the LLF case, indicating that
the Lax-Friedrichs-type viscous flux function can sometimes have better
stabilization properties than the BR2 scheme. Regardless, the results
in the following subsection suggest that the latter is still the preferred
viscous flux function, provided that the positivity property is satisfied.

\begin{figure}[H]
\subfloat[\label{fig:shock_tube_Y_BR2}Mass fractions.]{\includegraphics[width=0.45\columnwidth]{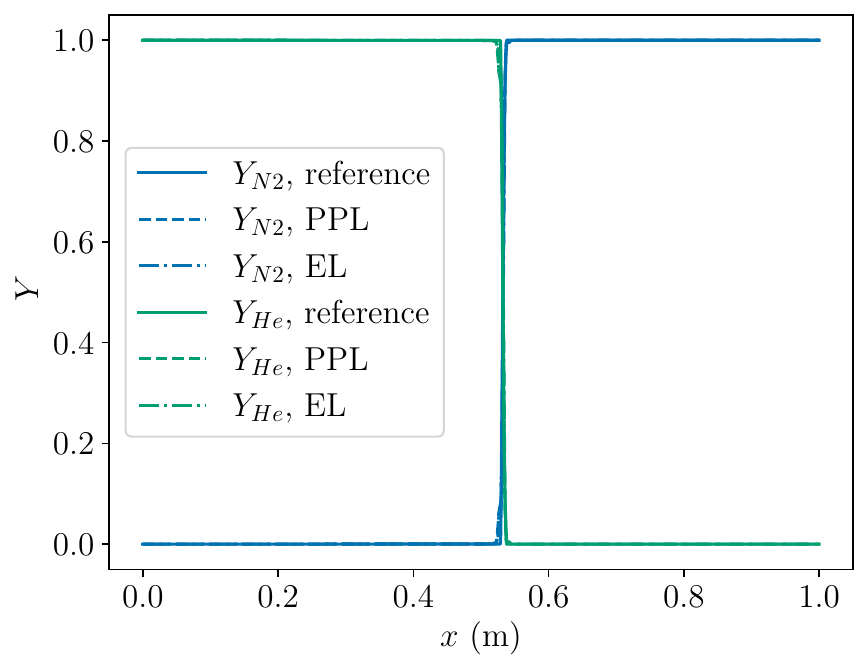}}\hfill{}\subfloat[\label{fig:shock_tube_P_BR2}Pressure.]{\includegraphics[width=0.45\columnwidth]{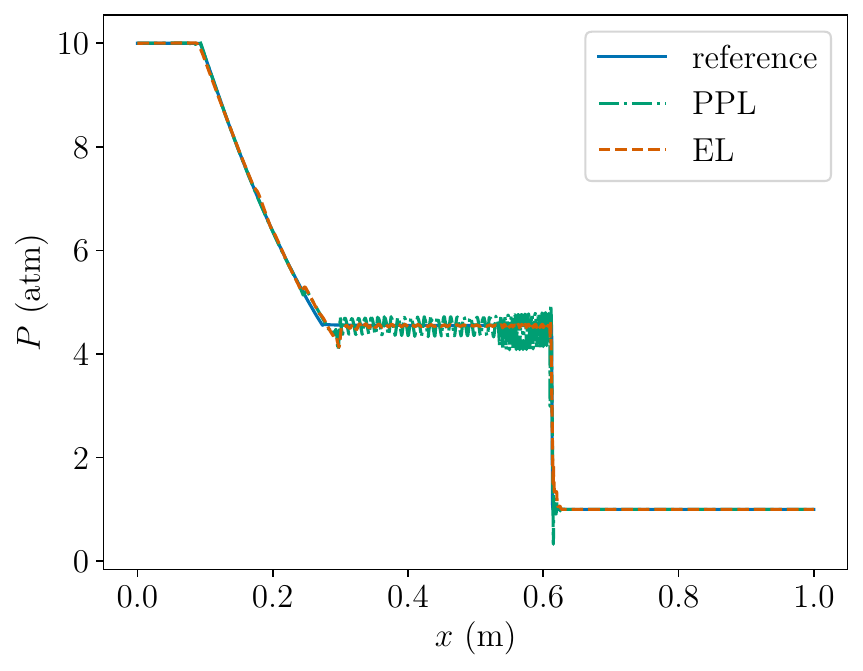}}\hfill{}\subfloat[\label{fig:shock_tube_T_BR2}Temperature.]{\includegraphics[width=0.48\columnwidth]{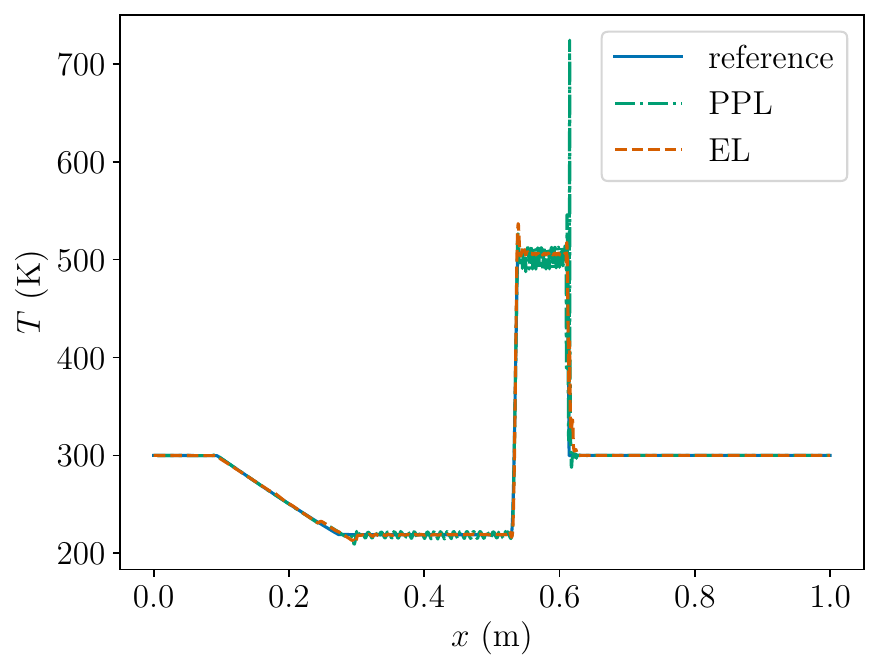}}\hfill{}\subfloat[\label{fig:shock_tube_s_BR2}Specific thermodynamic entropy.]{\includegraphics[width=0.45\columnwidth]{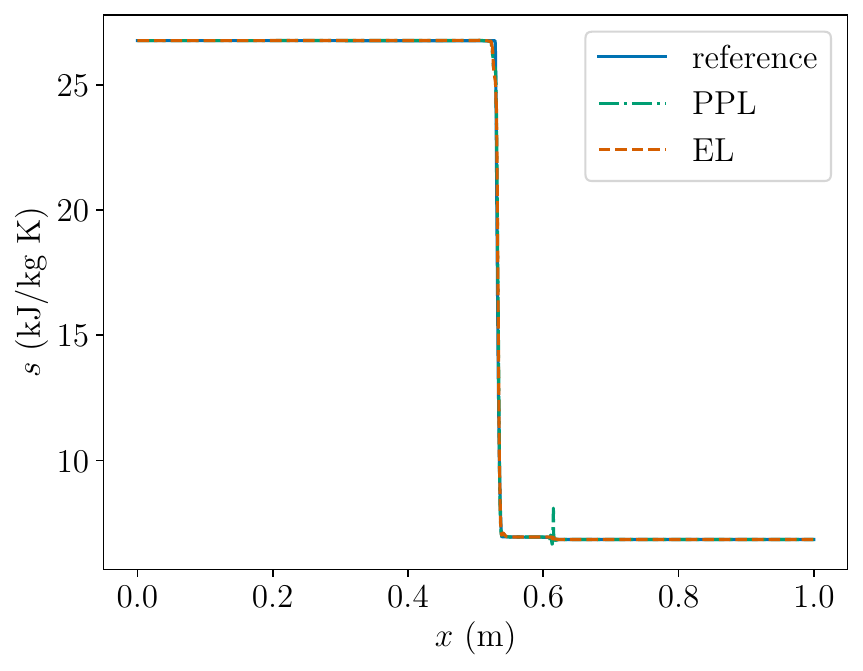}}

\caption{\label{fig:shock_tube_BR2}Results for $p=3$ solutions computed using
BR2 on 200 elements without artificial viscosity for the one-dimensional,
multicomponent shock-tube problem with initialization in Equation~(\ref{eq:shock-tube-IC-Houim}).
``PPL'' corresponds to the positivity-preserving limiter by itself,
and ``EL'' refers to both the positivity-preserving and entropy
limiters with the local entropy bound in Equation~(\ref{eq:local-entropy-bound}).}
\end{figure}

\begin{figure}[H]
\subfloat[\label{fig:shock_tube_Y_LLF}Mass fractions.]{\includegraphics[width=0.45\columnwidth]{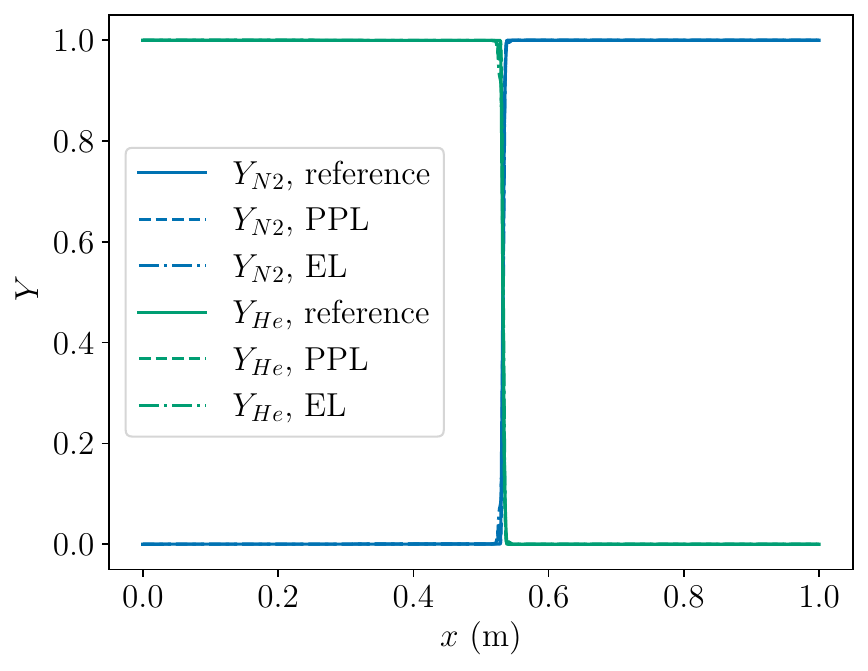}}\hfill{}\subfloat[\label{fig:shock_tube_P_LLF}Pressure.]{\includegraphics[width=0.45\columnwidth]{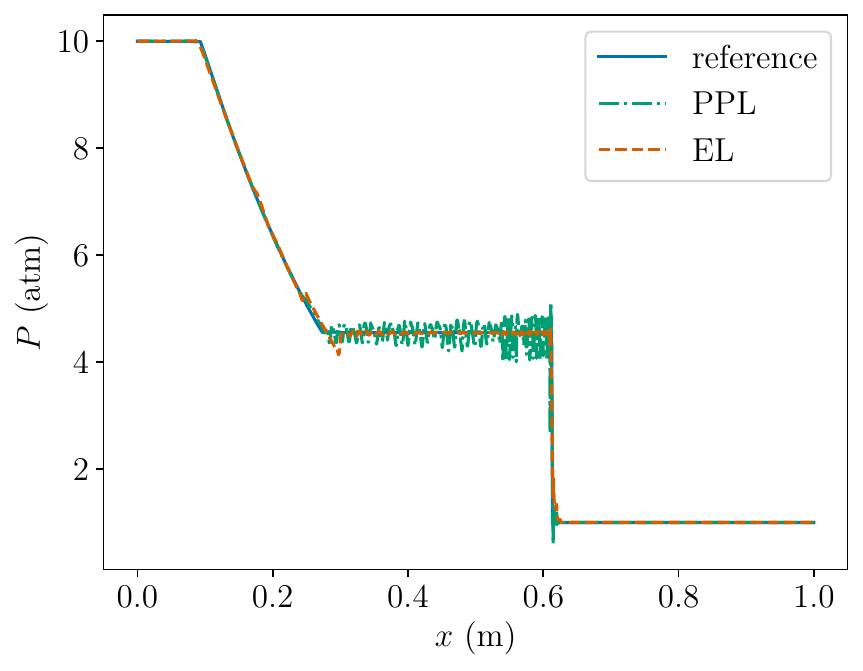}}\hfill{}\subfloat[\label{fig:shock_tube_T_LLF}Temperature.]{\includegraphics[width=0.48\columnwidth]{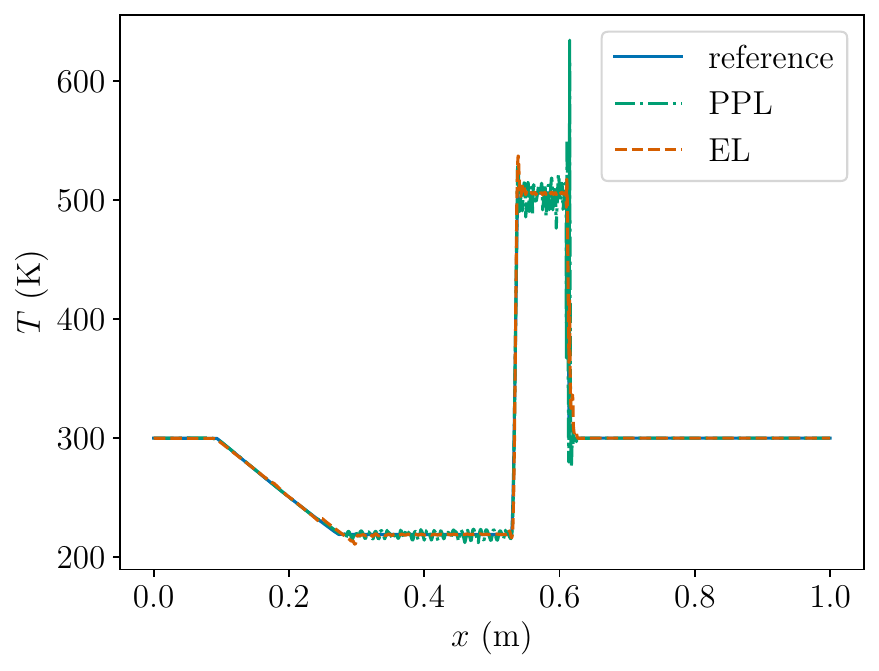}}\hfill{}\subfloat[\label{fig:shock_tube_s_LLF}Specific thermodynamic entropy.]{\includegraphics[width=0.45\columnwidth]{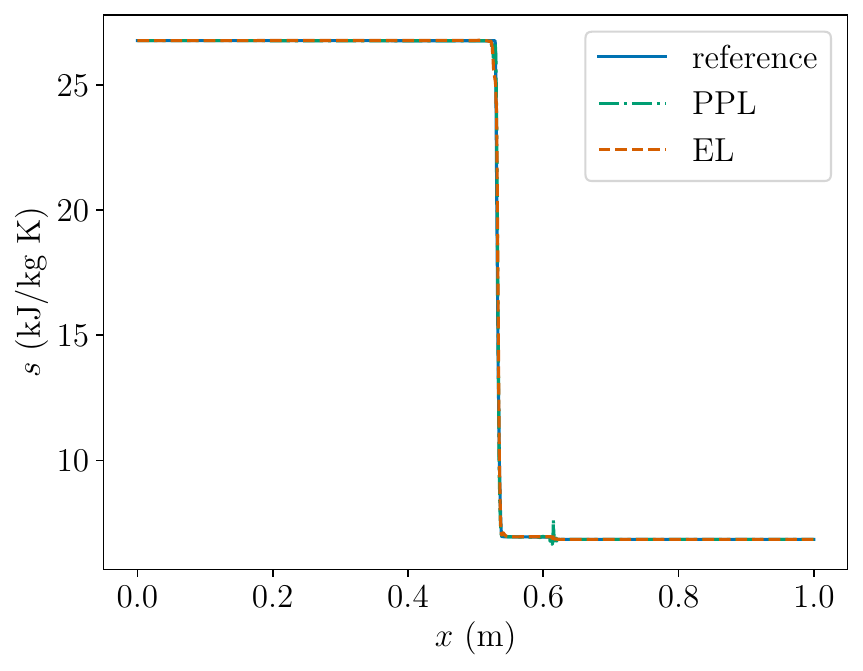}}

\caption{\label{fig:shock_tube_LLF}Results for $p=3$ solutions computed using
LLF on 200 elements without artificial viscosity for the one-dimensional,
multicomponent shock-tube problem with initialization in Equation~(\ref{eq:shock-tube-IC-Houim}).
``PPL'' corresponds to the positivity-preserving limiter by itself,
and ``EL'' refers to both the positivity-preserving and entropy
limiters with the local entropy bound in Equation~(\ref{eq:local-entropy-bound}).}
\end{figure}

\subsection{Two-dimensional detonation wave~\label{subsec:detonation-results}}

This test case involves a moving hydrogen-oxygen detonation wave diluted
in Argon with initial conditions
\begin{equation}
\begin{array}{cccc}
\qquad\qquad\qquad\left(v_{1},v_{2}\right) & = & \left(0,0\right)\text{ m/s},\\
X_{Ar}:X_{H_{2}O}:X_{OH}:X_{O_{2}}:X_{H_{2}} & = & \begin{cases}
8:2:0.1:0:0\\
7:0:0:1:2
\end{cases} & \begin{array}{c}
x_{1}<0.012\text{ m},x\in\mathcal{C}_{1},x\in\mathcal{C}_{2}\\
\mathrm{otherwise}
\end{array},\\
\qquad\qquad\qquad\qquad P & = & \begin{cases}
\expnumber{5.50}5 & \text{ Pa}\\
\expnumber{6.67}3 & \text{ Pa}
\end{cases} & \begin{array}{c}
x_{1}<0.012\text{ m},x\in\mathcal{C}_{1},x\in\mathcal{C}_{2}\\
\mathrm{otherwise}
\end{array},\\
\qquad\qquad\qquad\qquad T & = & \begin{cases}
3500 & \text{ K}\\
300\text{\hspace{1em}\hspace{1em}} & \text{ K}
\end{cases} & \begin{array}{c}
x_{1}<0.012\text{ m},x\in\mathcal{C}_{1},x\in\mathcal{C}_{2}\\
\mathrm{otherwise}
\end{array},
\end{array}\label{eq:2D-detonation-initialization}
\end{equation}
where
\begin{align*}
\mathcal{C}_{1} & =\left\{ x\left|\sqrt{\left(x_{1}-0.019\right)^{2}+\left(x_{2}-0.015\right)^{2}}<0.0035\text{ m}\right.\right\} ,\\
\mathcal{C}_{2} & =\left\{ x\left|\sqrt{\left(x_{1}-0.020\right)^{2}+\left(x_{2}-0.044\right)^{2}}<0.0035\text{ m}\right.\right\} ,
\end{align*}
which represent two high-pressure/high-temperature regions to perturb
the flow. The computational domain is $\text{\ensuremath{\Omega}}=\left(0,0.45\right)\mathrm{m}\times\left(0,0.06\right)\mathrm{m}$,
with adiabatic, no-slip walls at the left, right, bottom, and top
boundaries. The chemical mechanism is based on the Westbrook mechanism
and can be found in~\citep[Appendix D]{Chi22_2}.

Johnson and Kercher computed this flow without viscous effects with
$p=1$ and a very fine mesh with spacing $h=9\times10^{-5}$ m~\citep{Joh20_2}.
In~\citep{Chi22_2}, we simulated this flow (also without viscous
effects) using a series of triangular grids ranging from very coarse
to fine. Stability was maintained across all resolutions. The finer
cases predicted the correct diamond-like cellular structure, with
a cell length of $0.055\text{ m}$ and a cell height of $0.03\text{ m}$~\citep{Ora98,Lef98}.
In particular, there were two cells in the vertical direction. Here,
we recompute this flow with viscous effects and quadrilateral elements.
Specifically, we use Gmsh~\citep{Geu09} to first generate structured-type,
uniform grids with element sizes of $2h$, $8h$, and $64h$; the
cells are then clustered near the top and bottom walls, resulting
in smaller mesh spacing in the vertical direction at said walls. Since
the grids do not directly account for the circular perturbations in
Equation~(\ref{eq:2D-detonation-initialization}), the discontinuities
in the initial conditions are slightly smoothed using hyperbolic tangent
functions. For the remainder of this subsection, ``BR2'' refers
to the adaptive time stepping strategy exactly as described in Section~\ref{subsec:Adaptive-time-stepping},
whereas ``LLF'' refers to a similar time stepping strategy, but
with the viscous flux function fixed to be the local Lax-Friedrichs-type
flux function. SSPRK3 time integration is employed with a default
CFL of 0.4.

Figure~\ref{fig:2D-detonation-LLF} presents the distributions of
OH mole fraction and temperature obtained from $p=2$ solutions at
$t=200$~$\mu\mathrm{s}$ computed with LLF. Unsurprisingly, the
$64h$ solution is extremely smeared behind the shock. A large, nonphysical
temperature undershoot is observed near the top and bottom walls.
To more clearly illustrate this undershoot, Figure~\ref{fig:2D-detonation-T-zoom}
(top) zooms in on the temperature field at the bottom wall. The flow
is much better resolved in the $8h$ case according to Figure~\ref{fig:2D-detonation-LLF}.
The near-wall instabilities largely disappear, but spurious oscillations
can be observed in the mole-fraction field. Figure~\ref{fig:2D-detonation-BR2}
displays the corresponding distributions of OH mole fraction and temperature
obtained with BR2, along with a $2h$ solution. Figure~\ref{fig:2D-detonation-T-zoom}
(bottom) gives the near-wall temperature distribution for $64h$.
These $64h$ and $8h$ solutions are similar to the LLF solutions,
but with much smaller nonphysical instabilities. The detonation-front
locations are fairly close across all cases. In the $2h$ solution,
the flow topology, including transverse waves, vortices, and triple
points, is well-captured.

The gradient limiting procedure described in Section~\ref{subsec:zero-species-concentrations}
is applied at the first time step of both LLF simulations, as well
as the second time step of only the $8h$ LLF simulation. Without
it, at the first step of both simulations, $\Delta t$ would need
to be halved approximately thirty times, resulting in a time-step
size close to machine precision, for the solver to proceed. In contrast,
gradient limiting is not applied in the BR2 simulations. This observation,
combined with the smaller oscillations in the BR2 solutions, is the
reason why the BR2 flux function is chosen to be the ``default''
flux function in the adaptive time stepping strategy proposed in Section~\ref{subsec:Adaptive-time-stepping}.

\begin{figure}[H]
\subfloat[\label{fig:2D-detonation-X-OH-LLF}OH mole fraction]{\includegraphics[width=0.96\columnwidth]{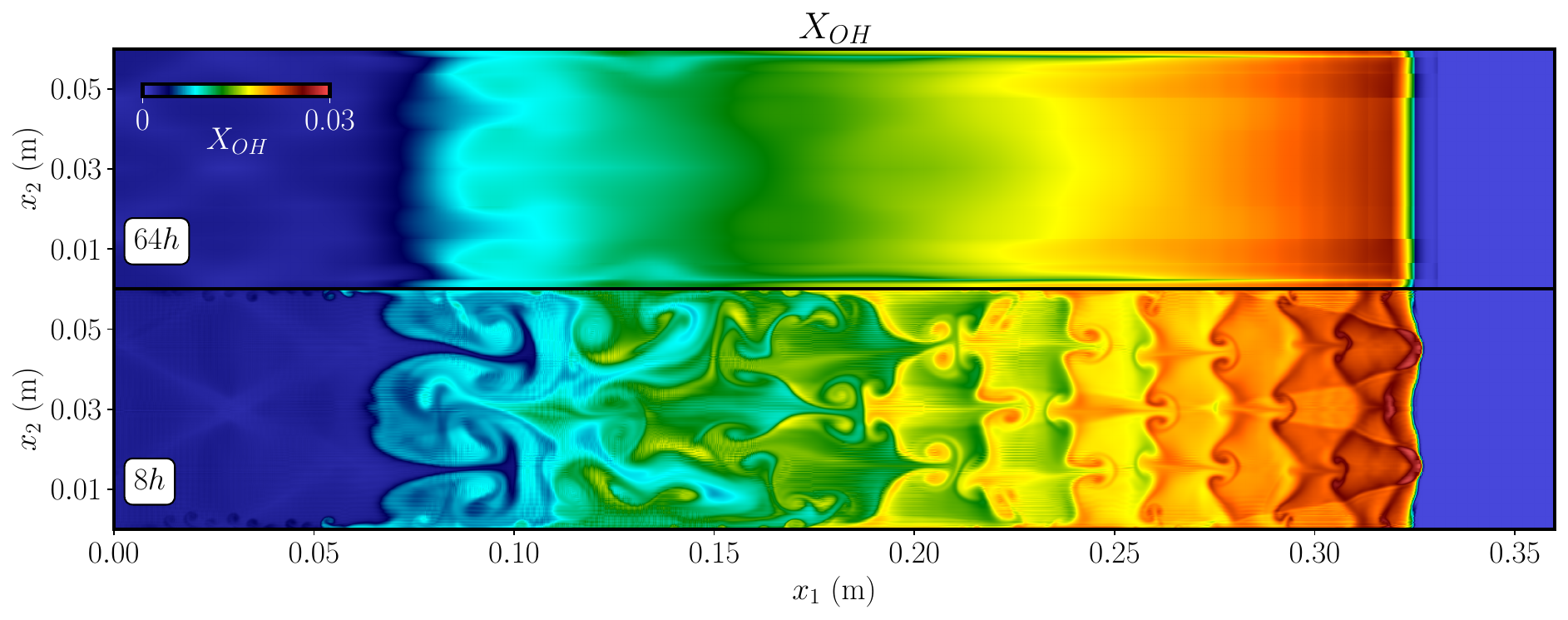}}\hfill{}\subfloat[\label{fig:2D-detonation-T-LLF}Temperature]{\includegraphics[width=0.96\columnwidth]{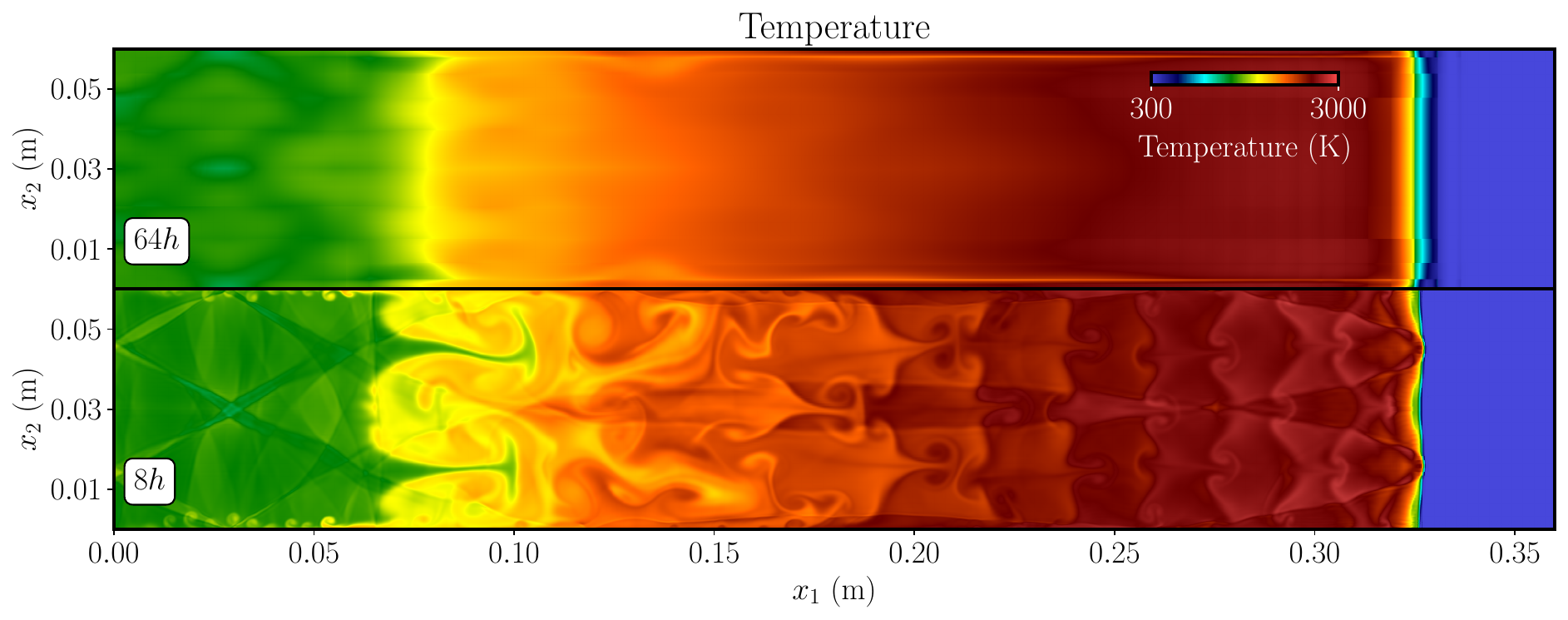}}

\caption{\label{fig:2D-detonation-LLF}$p=2$ solution to a two-dimensional
moving detonation wave at $t=200$~$\mu\mathrm{s}$ computed with
LLF. The initial conditions are given in Equation~(\ref{eq:2D-detonation-initialization}).}
\end{figure}

\begin{figure}[H]
\begin{centering}
\includegraphics[width=0.96\columnwidth]{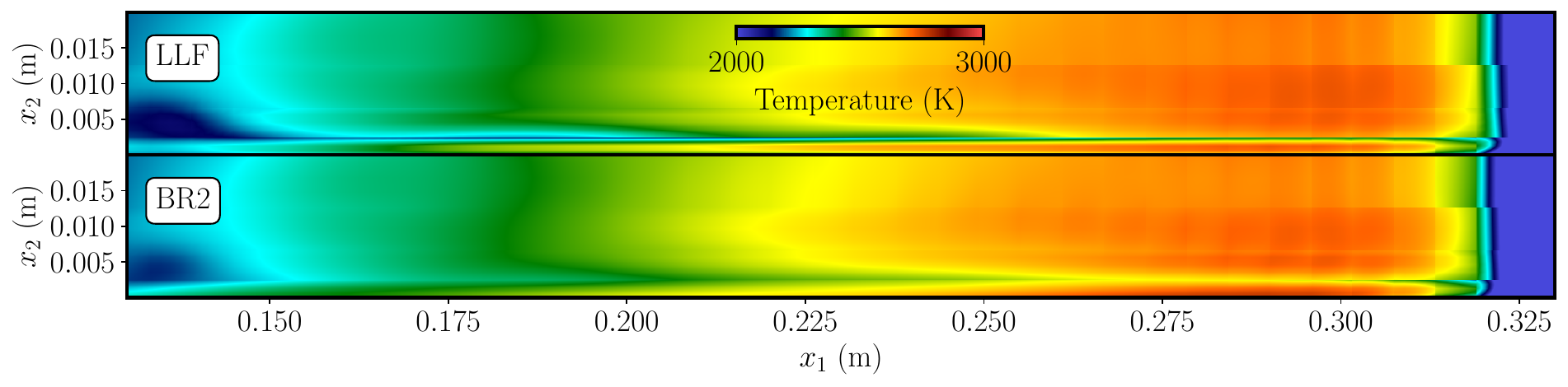}
\par\end{centering}
\caption{\label{fig:2D-detonation-T-zoom}Temperature distributions obtained
with the $64h$ mesh zoomed in on the bottom wall.}
\end{figure}
\begin{figure}[H]
\subfloat[\label{fig:2D-detonation-X-OH-BR2}OH mole fraction]{\includegraphics[width=0.96\columnwidth]{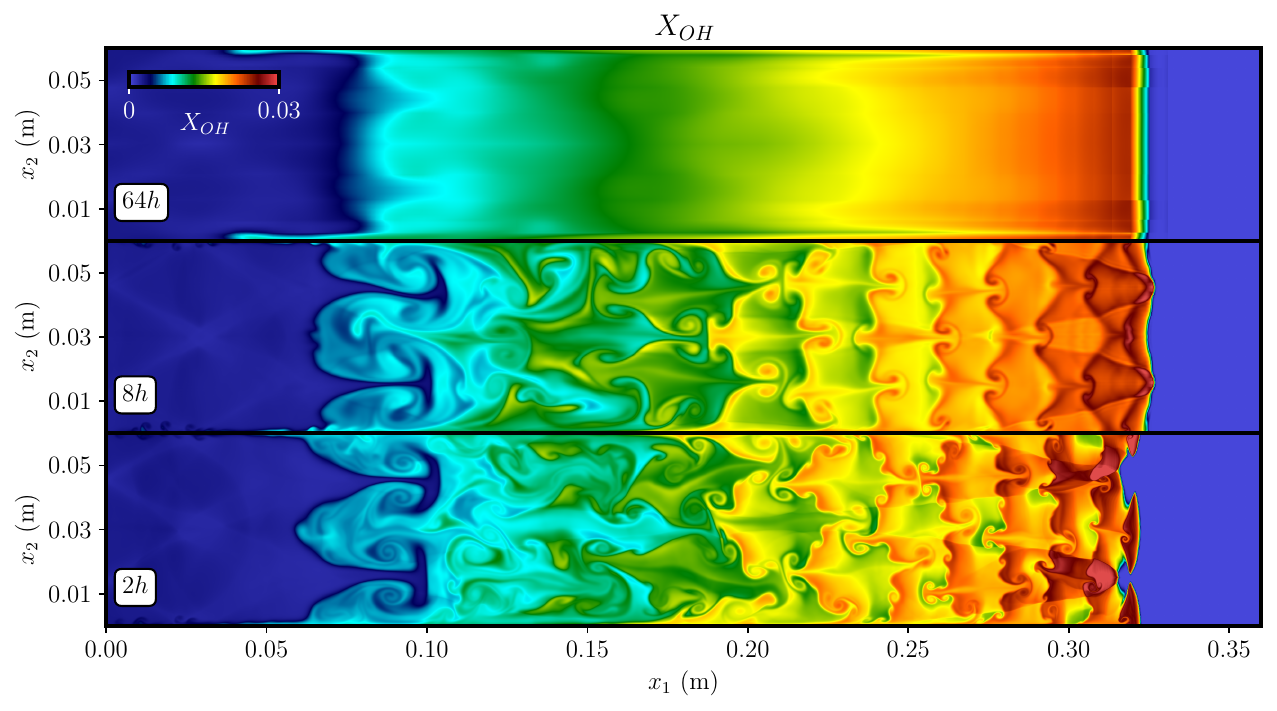}}\hfill{}\subfloat[\label{fig:2D-detonation-T-BR2}Temperature]{\includegraphics[width=0.96\columnwidth]{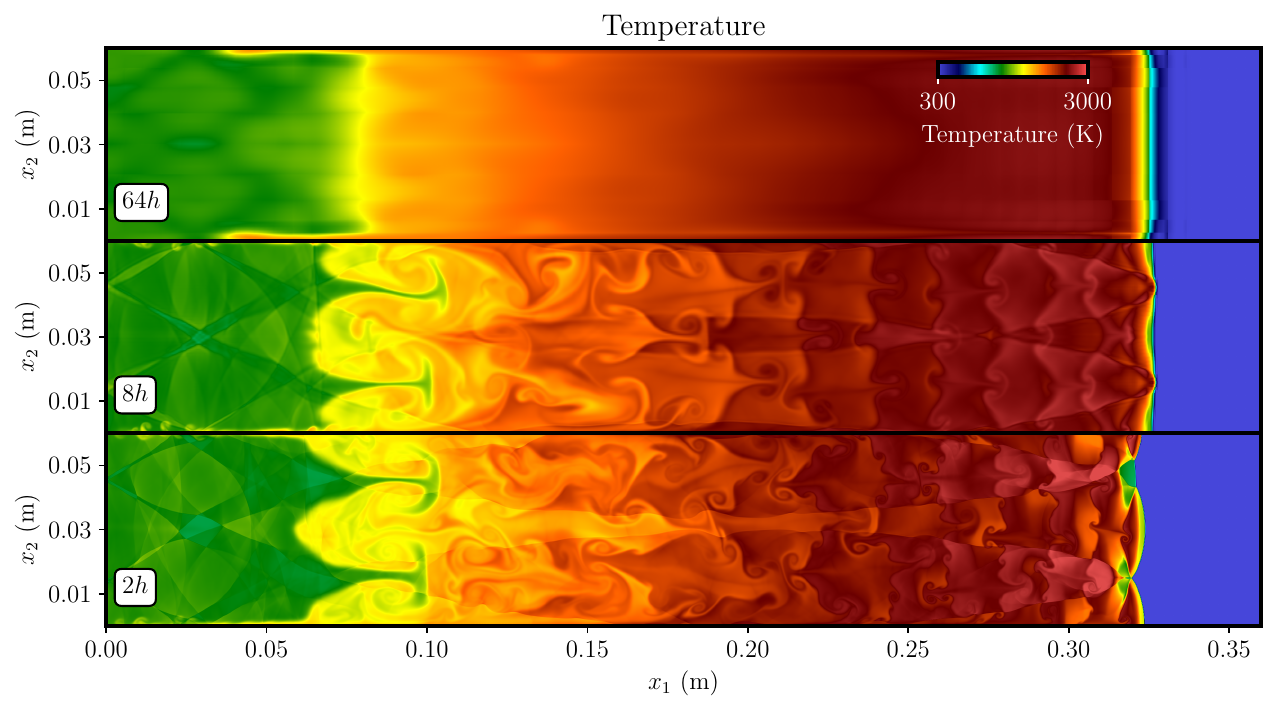}}

\caption{\label{fig:2D-detonation-BR2}$p=2$ solution to a two-dimensional
moving detonation wave at $t=200$~$\mu\mathrm{s}$ computed with
BR2. The initial conditions are given in Equation~(\ref{eq:2D-detonation-initialization}).}
\end{figure}

Figure~\ref{fig:2D-detonation-pmax} presents the maximum-pressure
history, $P^{*}$, where $P^{*,j+1}(x)=\max\left\{ P^{j+1}(x),P^{*,j}(x)\right\} $,
for the $p=2$, BR2 solutions. No cellular structure can be discerned
in the $64h$ case due to the extremely coarse mesh. The cells in
the $8h$ solution can be clearly identified, but begin to slightly
dissipate towards the right of the domain. Finally, those in the $2h$
solution remain sharp throughout.

\begin{figure}[H]
\begin{centering}
\includegraphics[width=0.96\columnwidth]{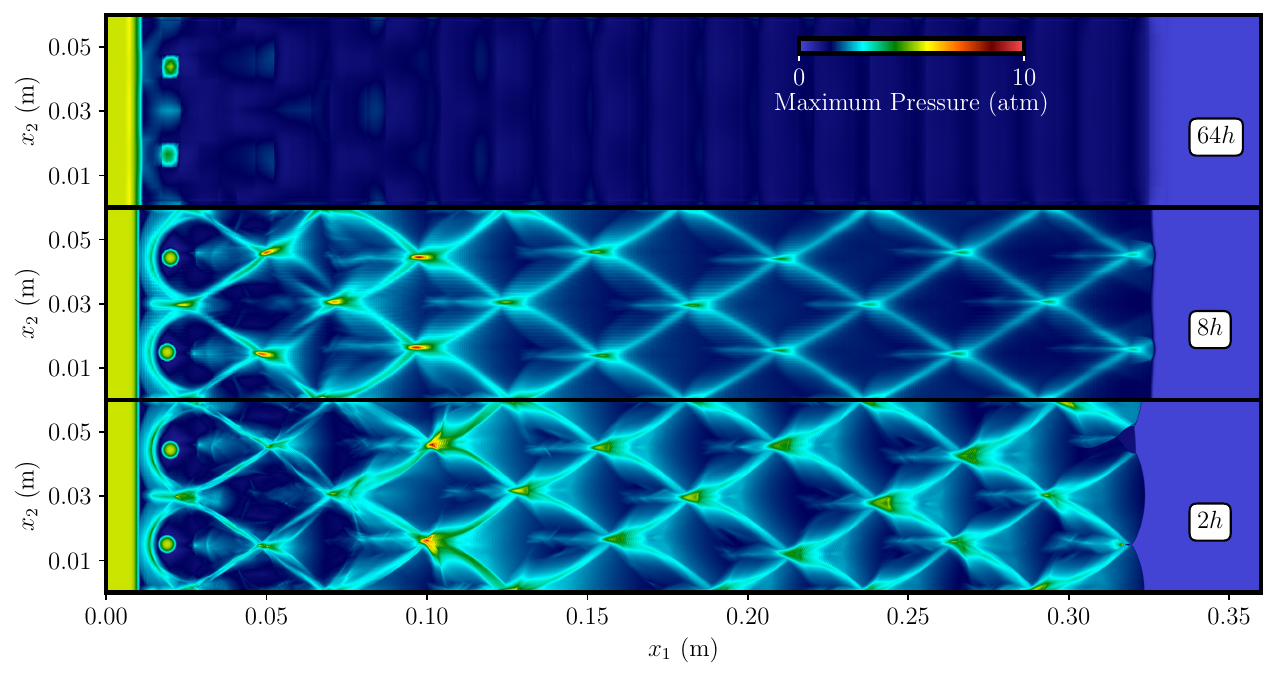}
\par\end{centering}
\caption{\label{fig:2D-detonation-pmax}Maximum-pressure history,  $P^{*}$,
where $P^{*,j+1}(x)=\max\left\{ P^{j+1}(x),P^{*,j}(x)\right\} $,
for a two-dimensional moving detonation wave at $t=200$~$\mu\mathrm{s}$
computed with $p=2$, BR2, and a sequence of meshes, where $h=9\times10^{-5}$
m. The initial conditions are given in Equation~(\ref{eq:2D-detonation-initialization}).}
\end{figure}

Finally, we recompute the BR2, $64h$ case with curved elements of
quadratic geometric order. Specifically, high-order geometric nodes
are first inserted into the straight-sided mesh, after which the midpoint
nodes at interior interfaces are perturbed. These perturbations are
performed only for $x>0.05\text{ m}$ to ensure the initial conditions
are the same. This low-resolution case is computed in order to guarantee
that the limiter is frequently activated. Figure~\ref{fig:2D-detonation-X-OH-32h-linear-curved}
displays the distributions of OH mole fraction for the linear and
curved meshes, which are superimposed. The solution obtained with
curved mesh is stable and extremely similar to that computed with
the linear mesh, demonstrating that the proposed formulation is indeed
compatible with curved elements.

\begin{figure}[H]
\begin{centering}
\includegraphics[width=0.96\columnwidth]{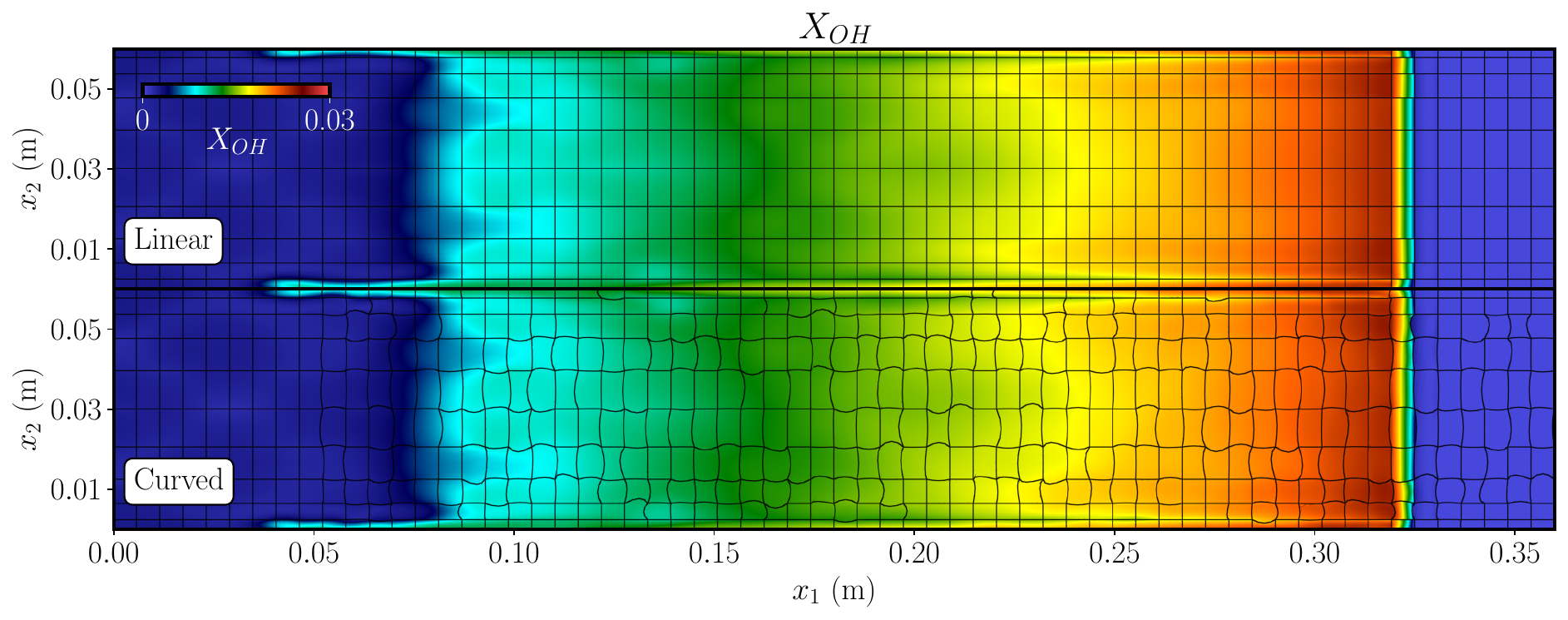}
\par\end{centering}
\caption{\label{fig:2D-detonation-X-OH-32h-linear-curved}OH mole-fraction
field for a two-dimensional moving detonation wave at $t=200$~$\mu\mathrm{s}$
computed with $p=2$, BR2, and $64h$, where $h=9\times10^{-5}$ m,
on linear and curved meshes. The curved mesh, which is of quadratic
order, is obtained by inserting high-order geometric nodes into the
linear mesh and perturbing said nodes. The initial conditions are
given in Equation~(\ref{eq:2D-detonation-initialization}).}
\end{figure}

\subsection{Three-dimensional shock/mixing-layer interaction}

In this section, we compute a three-dimensional chemically reacting
mixing layer that intersects with an oblique shock. This test case
was first presented in~\citep{Bur22}, which built on the configuration
introduced in~\citep{Fer14}. The mesh and flow parameters are slightly
different from those in~\citep{Bur22}.

Figure~\ref{fig:shear_layer_diagram} displays a two-dimensional
schematic of the flow configuration. Supersonic inflow is applied
at the left boundary, and extrapolation is applied at the right boundary.
Flow parameters for the incoming air and fuel are listed in Table~\ref{tab:Air_and_Fuel}.
Slip-wall conditions are applied the top and bottom walls since it
is not necessary to capture the boundary layers. We use the detailed
reaction mechanism described in~\citep[Appendix D]{Chi22_2}. A default
CFL of 0.5 with SSPRK3 time integration is employed.

\begin{figure}[H]
\begin{centering}
\includegraphics[width=0.8\columnwidth]{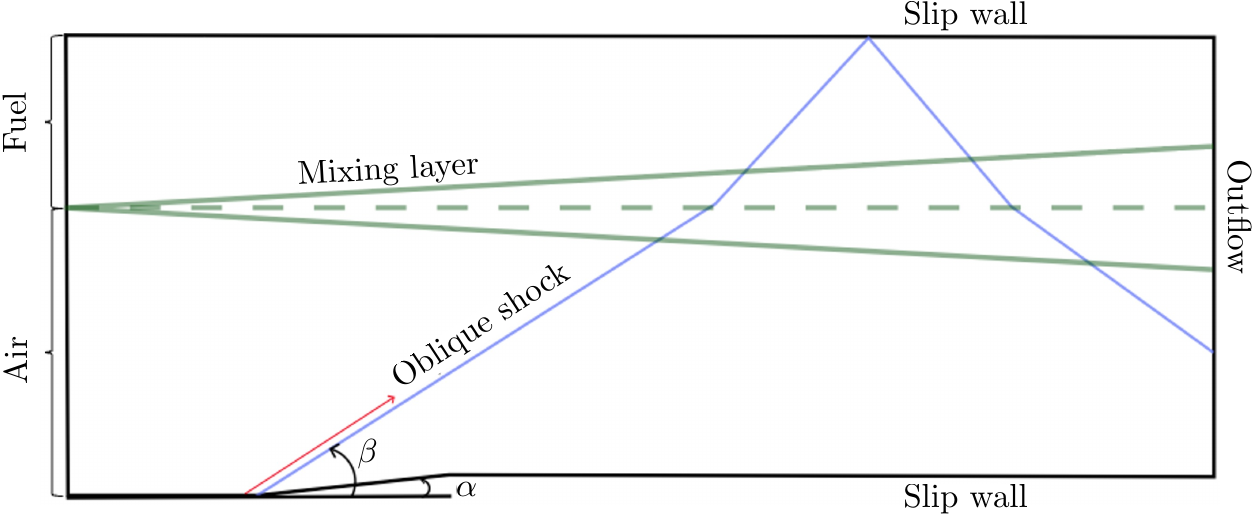}
\par\end{centering}
\caption{\label{fig:shear_layer_diagram}Schematic of the three-dimensional
shock/mixing-layer interaction test case. }
\end{figure}

\begin{table}[H]
\centering{}%
\begin{tabular}{|c|c|c|}
\hline
\textbf{Prescribed Quantity} & \textbf{Air Boundary} & \textbf{Fuel Boundary}\tabularnewline
\hline
\hline
Velocity, $v_{1}$ (m/s) & 1634 m/s & 973 m/s\tabularnewline
\hline
Temperature, $T$ (K) & 1475 & 545\tabularnewline
\hline
$Y_{O_{2}}$ & 0.278 & 0\tabularnewline
\hline
$Y_{N_{2}}$ & 0.552 & 0.95\tabularnewline
\hline
$Y_{H_{2}}$ & 0 & 0.05\tabularnewline
\hline
$Y_{H_{2}O}$ & 0.17 & 0\tabularnewline
\hline
$Y_{H}$ & $5.6\times10^{-7}$ & 0\tabularnewline
\hline
$Y_{O}$ & $1.55\times10^{-4}$ & 0\tabularnewline
\hline
$Y_{OH}$ & $1.83\times10^{-3}$ & 0\tabularnewline
\hline
$Y_{HO_{2}}$ & $5.1\times10^{-6}$ & 0\tabularnewline
\hline
$Y_{H_{2}O_{2}}$ & $2.5\times10^{-7}$ & 0\tabularnewline
\hline
\end{tabular}\caption{\label{tab:Air_and_Fuel}Inflow parameters for three-dimensional shock/mixing-layer
interaction. These values are taken from~\citep{Fer14}.}
\end{table}

To connect the fuel and air streams, we utilize a hyperbolic tangent
function for prescribing the species, temperature, and velocity with
a constant pressure specification,

\begin{eqnarray}
Y_{i}(x_{2},x_{3},t) & = & \frac{1}{2}\left(\left(Y_{i,F}+Y_{i,O}\right)+\left(Y_{i,F}-Y_{i,O}\right)\tanh\left(\frac{\left(2\left(x_{2}-h\left(x_{3},t\right)\right)\right)}{L\left(x_{3},t\right)}\right)\right)\nonumber \\
T(x_{2},x_{3},t) & = & \frac{1}{2}\left(\left(T_{F}+Y_{O}\right)+\left(T_{F}-T_{O}\right)\tanh\left(\frac{\left(2\left(x_{2}-h\left(x_{3},t\right)\right)\right)}{L\left(x_{3},t\right)}\right)\right)\nonumber \\
v_{1}(x_{2},x_{3},t) & = & \frac{1}{2}\left(\left(v_{1,F}+v_{1,O}\right)+\left(v_{1,F}-v_{1,O}\right)\tanh\left(\frac{\left(2\left(x_{2}-h\left(x_{3},t\right)\right)\right)}{L\left(x_{3},t\right)}\right)\right)\label{eq:shear_layer_boundary_condition}\\
P & = & 94232.25\text{ Pa},\nonumber
\end{eqnarray}
where $(\cdot)_{O}$ denotes air, $(\cdot)_{F}$ denotes fuel, $L$
is a length scale, and $h$ is the center of the hyperbolic tangent.
Equation~\ref{eq:shear_layer_boundary_condition} is also used to
initialize the solution. $L$ is given by

\begin{eqnarray}
L(x_{3},t) & = & L_{s}+\sum_{i=1}^{n_{t}}A_{i}\sin\left(\frac{m_{i}2\pi t}{t_{r}}\right)+l(x_{3},t)\nonumber \\
l\left(x_{3},t\right) & = & B\sin\left(\frac{24\pi t}{t_{r}}\right)\left[\sin\left(\frac{8\pi x_{3}}{z_{h}}\right)+\sin\left(\frac{32\pi x_{3}}{z_{h}}\right)\right]\text{,}\label{eq:length_scale}
\end{eqnarray}
where $L_{s}=0.05$ mm is a reference length scale, $z_{h}=1.44$
mm is the domain thickness in the $x_{3}$-direction, and $t_{r}=3.635\times10^{-5}$
s is a reference time scale. Furthermore, $n_{t}=4$, $\left(A_{1},A_{2},A_{3},A_{4}\right)=\left(0.0025,0.0125,0.00125,0.00625\right)$
mm, $\left(m_{1},m_{2},m_{3},m_{4}\right)=\left(1,3,11,13\right)$,
and $B=0.0125$ mm.  To induce additional variation in this three-dimensional
case, we prescribe $h$ as

\begin{eqnarray}
h(x_{3},t) & = & h_{s}+10B\sin\left(\frac{16\pi x_{3}}{z_{h}}\right),\label{eq:third_dimension_perturbation}
\end{eqnarray}
where $h_{s}=8.64$ mm is the ambient center of the hyperbolic tangent.

\begin{figure}[H]
\begin{centering}
\includegraphics[width=0.96\columnwidth]{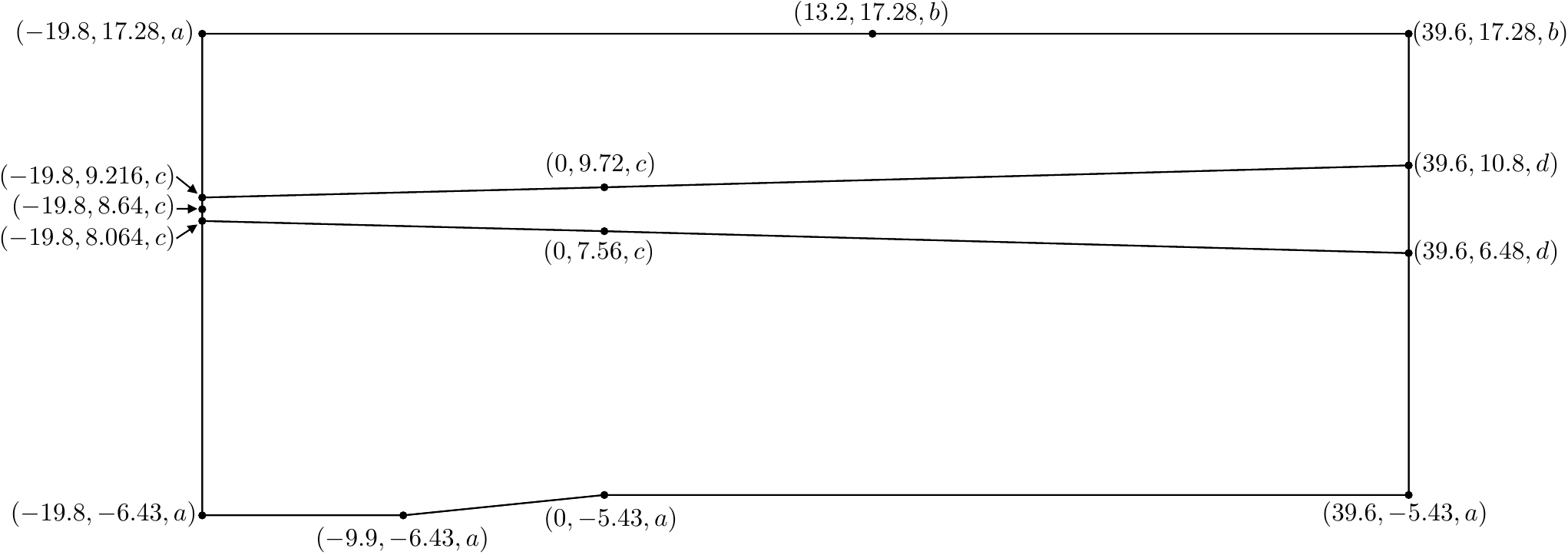}
\par\end{centering}
\caption{\label{fig:mesh_specs}Diagram of geometry with $x_{1}x_{2}$-point
locations (in mm) for mesh generation in Gmsh~\citep{Geu09}. The
third value in each tuple ($a$ through $d$) is the target mesh size
at the respective location. }
\end{figure}

Figure~\ref{fig:mesh_specs} shows the specifications in Gmsh~\citep{Geu09}
used to create an unstructured tetrahedral mesh. For each tuple, the
first two values are the $x_{1}$ and $x_{2}$ locations of the given
point. The third value represents the target mesh size. We select
$a=0.5$ mm, $b=0.2$ mm , $c=0.06$ mm, and $d=0.12$ mm. The mesh
is extruded in the $x_{3}$-direction from $x_{3}=0$ to $x_{3}=z_{h}$,
with periodicity applied at the resulting $x_{1}x_{2}$-planes.

Figure~\ref{fig:3D-splitter-plate} shows instantaneous isosurfaces
corresponding to $Y_{OH}=0.00017$, superimposed on a numerical Schlieren
result sampled along an $x_{1}x_{2}$-plane. The $Y_{OH}$ isosurfaces
ares colored by pressure to highlight the abrupt compression experienced
through the oblique shock. The right image provides a zoomed-in perspective
to emphasize the three dimensional flow features. Roll-up is observed
upstream of the oblique shock. The interaction between the shock and
the mixing layer causes the generation of smaller-scale compression
waves. These results demonstrate that the proposed formulation can
capture complex flow features in three dimensions.

\begin{figure}[H]
\begin{centering}
\includegraphics[width=0.96\columnwidth]{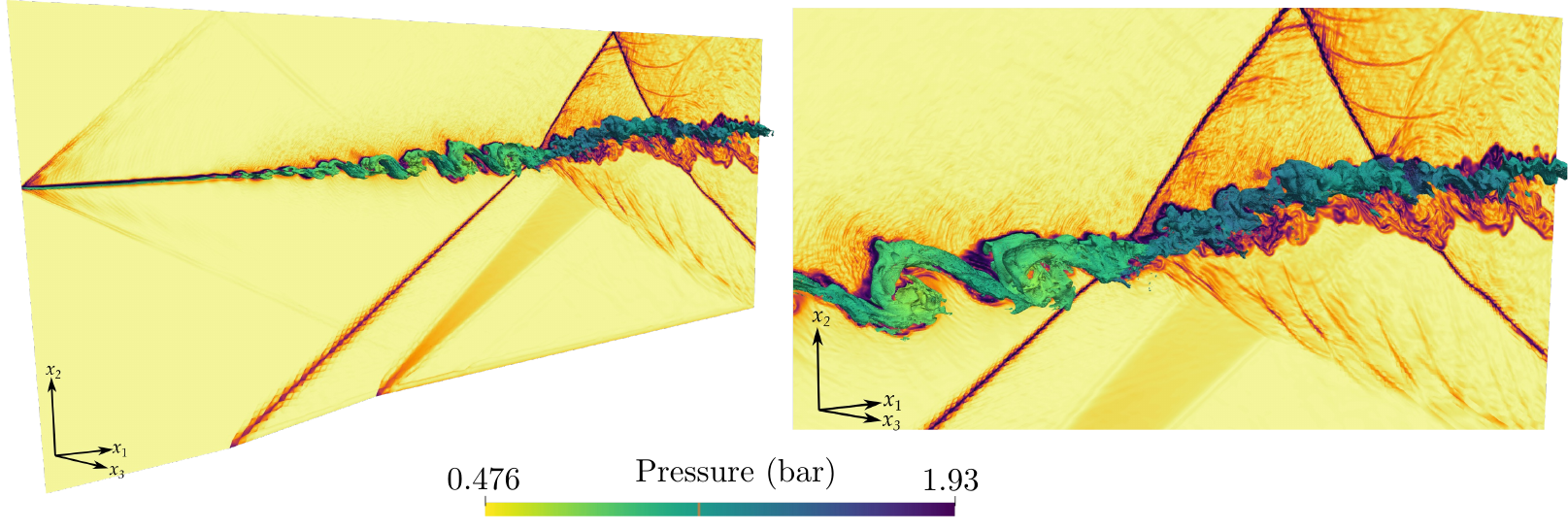}
\par\end{centering}
\caption{\label{fig:3D-splitter-plate}$Y_{OH}$ and numerical Schlieren results
for three-dimensional shock/mixing-layer interaction.}
\end{figure}

\section{Concluding remarks}

In this paper, we developed a fully conservative, positivity-preserving,
and entropy-bounded DG formulation for the chemically reacting, compressible
Navier-Stokes equations. The formulation builds on the fully conservative,
positivity-preserving, and entropy-bounded DG formulation for the
chemically reacting, compressible Euler equations that we previously
introduced~\citep{Chi22,Chi22_2}. A key ingredient is the positivity-preserving
Lax-Friedrichs-type viscous flux function devised by Zhang~\citep{Zha17}
for the monocomponent case, which we extended to multicomponent flows
with species diffusion in a manner that separates the inviscid and
viscous fluxes. This is in contrast with the work by Du and Yang~\citep{Du22},
who similarly extended the Lax-Friedrichs-type viscous flux function,
but treated the inviscid and viscous fluxes together. We discussed
in detail the consideration of boundary conditions and the techniques
by Johnson and Kercher~\citep{Joh20_2} to reduce spurious pressure
oscillations, introducing additional constraints on the time step
size. Entropy boundedness is enforced on only the convective contribution
since the minimum entropy principle only applies to the Euler equations~\citep{Tad86,Gue14}
and the viscous flux function is not fully compatible with the entropy
bound. Drawing from~\citep{Zha17}, we proposed an adaptive solution
procedure that favors large time step sizes and the BR2 viscous flux
function since the Lax-Friedrichs-type viscous flux function was found
to more likely lead to spurious oscillations. Small time-step sizes
and/or the Lax-Friedrichs-type viscous flux function are employed
only when necessary. However, it should be noted that the Lax-Friedrichs-type
viscous flux function guarantees a finite time-step size such that
the positivity property is maintained. Furthermore, we discussed potential
issues with the Lax-Friedrichs-type viscous flux function in the case
of zero (or near-zero) species concentrations and introduced a gradient-limiting
procedure as a remedy. The proposed methodology is compatible with
high-order polynomials and curved elements of arbitrary shape.

The DG methodology was applied to a series of test cases. The first
two comprised smooth, one-dimensional flows: advection-diffusion of
a thermal bubble and a premixed flame. In the former, optimal convergence
was demonstrated for both viscous flux functions. In the latter, we
obtained a much more accurate solution on a relatively coarse mesh
with the proposed methodology than with conventional species clipping.
Next, we computed viscous shock-tube flow and found that just as in
the inviscid setting, enforcement of entropy boundedness considerably
reduces the magnitude of large-scale instabilities that otherwise
appear if only the positivity property is enforced. Finally, we computed
two-dimensional, moving, viscous detonation waves and three-dimensional
shock/mixing-layer interaction, demonstrating that the proposed formulation
can accurately and robustly compute complex reacting flows with detailed
chemistry using high-order polynomial approximations. Future work
will entail the simulation of larger-scale viscous, chemically reacting
flows involving more complex geometries.

\section*{Acknowledgments}

This work is sponsored by the Office of Naval Research through the
Naval Research Laboratory 6.1 Computational Physics Task Area. 

\bibliographystyle{elsarticle-num}
\bibliography{citations}

\appendix

\section{Boundary conditions\label{sec:boundary-conditions}}

The boundary conditions here were originally described by Johnson
and Kercher~\citep{Joh20_2}, building on the discussion in~\citep{Har13}.

\subsection{Supersonic inflow}

Letting $y_{\infty}$ denote the fully prescribed state, we specify

\begin{align*}
y_{\partial}\left(y^{+},n^{+}\right)=y_{\infty} & \textup{ on }\epsilon\qquad\forall\epsilon\in\mathcal{E}_{\mathrm{in}},\\
\mathcal{F}_{\partial,k}^{\nu}\left(y_{\partial}\left(y^{+},n^{+}\right),\nabla y^{+}\right)=\mathcal{F}_{k}^{\nu}\left(y_{\infty},\nabla y^{+}\right) & \textup{ on }\epsilon\qquad\forall\epsilon\in\mathcal{E}_{\mathrm{in}},\\
\mathcal{F}_{\partial}^{c\dagger}\left(y^{+},n^{+}\right)=\mathcal{F}^{c}\left(y_{\infty}\right)\cdot n^{+} & \textup{ on }\epsilon\qquad\forall\epsilon\in\mathcal{E}_{\mathrm{in}}.
\end{align*}

\subsection{Supersonic outflow}

This boundary condition is given by

\begin{align*}
y_{\partial}\left(y^{+},n^{+}\right)=y^{+} & \textup{ on }\epsilon\qquad\forall\epsilon\in\mathcal{E}_{\mathrm{out}},\\
\mathcal{F}_{\partial,k}^{\nu}\left(y_{\partial}\left(y^{+},n^{+}\right),\nabla y^{+}\right)=\mathcal{F}_{k}^{\nu}\left(y^{+},\nabla y^{+}\right) & \textup{ on }\epsilon\qquad\forall\epsilon\in\mathcal{E}_{\mathrm{out}},\\
\mathcal{F}_{\partial}^{c\dagger}\left(y^{+},n^{+}\right)=\mathcal{F}^{c}\left(y^{+}\right)\cdot n^{+} & \textup{ on }\epsilon\qquad\forall\epsilon\in\mathcal{E}_{\mathrm{out}},
\end{align*}
where the boundary state is simply extrapolated from the interior.

\subsection{Slip wall}

We define the boundary velocity, $v_{\partial}=\left(v_{\partial,1},\ldots,v_{\partial,d}\right)$,
as
\[
v_{\partial}\left(y^{+},n^{+}\right)=\left(v_{1}^{+}-\left(\sum_{k=1}^{d}v_{k}^{+}n_{k}^{+}\right)n_{1}^{+},\cdots,v_{d}^{+}-\left(\sum_{k=1}^{d}v_{k}^{+}n_{k}^{+}\right)n_{d}^{+}\right).
\]
We then have

\begin{align*}
y_{\partial}\left(y^{+},n^{+}\right)=\left(\rho^{+}v_{\partial,1},\ldots,\rho^{+}v_{\partial,d},\left(\rho e_{t}\right)^{+},C_{i}^{+},\ldots,C_{n_{s}}^{+}\right)^{T} & \textup{ on }\epsilon\qquad\forall\epsilon\in\mathcal{E}_{\mathrm{slip}},\\
\mathcal{F}_{\partial,k}^{\nu}\left(y_{\partial}\left(y^{+},n^{+}\right),\nabla y^{+}\right)=\mathcal{F}_{k}^{\nu}\left(y_{\partial}\left(y^{+},n^{+}\right),\nabla y^{+}\right) & \textup{ on }\epsilon\qquad\forall\epsilon\in\mathcal{E}_{\mathrm{slip}},\\
\mathcal{F}_{\partial}^{c\dagger}\left(y^{+},n^{+}\right)=\mathcal{F}^{c\dagger}\left(y^{+},2y_{\partial}\left(y^{+},n^{+}\right)-y^{+},n^{+}\right) & \textup{ on }\epsilon\qquad\forall\epsilon\in\mathcal{E}_{\mathrm{slip}}.
\end{align*}
The viscous boundary flux is obtained from the boundary state and
the interior gradient. 

\subsection{Adiabatic wall}

Here, let $v_{\partial}$ be a prescribed boundary velocity. This
boundary condition is then given by

\begin{align*}
y_{\partial}\left(y^{+},n^{+}\right)=\left(\rho^{+}v_{\partial,1},\ldots,\rho^{+}v_{\partial,d},\left(\rho e_{t}\right)^{+},C_{i}^{+},\ldots,C_{n_{s}}^{+}\right)^{T} & \textup{ on }\epsilon\qquad\forall\epsilon\in\mathcal{E}_{\mathrm{adi}},\\
\mathcal{F}_{\partial,k}^{\nu}\left(y_{\partial}\left(y^{+},n^{+}\right),\nabla y^{+}\right)=\begin{pmatrix}\frac{\partial\tau_{1k}}{\partial\left(\nabla y\right)}\left(y_{\partial}\right):\nabla y^{+}\\
\vdots\\
\frac{\partial\tau_{dk}}{\partial\left(\nabla y\right)}\left(y_{\partial}\right):\nabla y^{+}\\
{\textstyle \sum_{j=1}^{d}}v_{\partial,j}\frac{\partial\tau_{kj}}{\partial\left(\nabla y\right)}\left(y_{\partial}\right):\nabla y^{+}\\
0\\
\vdots\\
0
\end{pmatrix} & \textup{ on }\epsilon\qquad\forall\epsilon\in\mathcal{E}_{\mathrm{adi}},\\
\mathcal{F}_{\partial}^{c\dagger}\left(y^{+},n^{+}\right)=\mathcal{F}^{c}\left(y_{\partial}\left(y^{+},n^{+}\right)\right)\cdot n^{+} & \textup{ on }\epsilon\qquad\forall\epsilon\in\mathcal{E}_{\mathrm{adi}},
\end{align*}
where the species diffusion velocities and heat flux have been set
to zero. The two boundary conditions below are not directly used in
this study, but we include them for completeness.

\subsection{Isothermal wall}

Again, let $v_{\partial}$ be a prescribed boundary velocity. We specify

\begin{align*}
y_{\partial}\left(y^{+},n^{+}\right)=\left(\rho_{\partial}v_{\partial,1},\ldots,\rho_{\partial}v_{\partial,d},\rho_{\partial}u_{\partial}+0.5\cdot{\textstyle \sum_{j=1}^{d}}\rho_{\partial}v_{\partial,k}v_{\partial,k},C_{\partial,i},\ldots,C_{\partial,n_{s}}\right)^{T} & \textup{ on }\epsilon\qquad\forall\epsilon\in\mathcal{E}_{\mathrm{iso}},\\
\mathcal{F}_{\partial,k}^{\nu}\left(y_{\partial}\left(y^{+},n^{+}\right),\nabla y^{+}\right)=\begin{pmatrix}\frac{\partial\tau_{1k}}{\partial\left(\nabla y\right)}\left(y_{\partial}\right):\nabla y^{+}\\
\vdots\\
\frac{\partial\tau_{dk}}{\partial\left(\nabla y\right)}\left(y_{\partial}\right):\nabla y^{+}\\
\left[{\textstyle \sum_{j=1}^{d}}v_{\partial,j}\frac{\partial\tau_{kj}}{\partial\left(\nabla y\right)}\left(y_{\partial}\right)-\frac{\partial q_{k}}{\partial\left(\nabla y\right)}\left(y_{\partial}\right)\right]:\nabla y^{+}\\
0\\
\vdots\\
0
\end{pmatrix} & \textup{ on }\epsilon\qquad\forall\epsilon\in\mathcal{E}_{\mathrm{iso}},\\
\mathcal{F}_{\partial}^{c\dagger}\left(y^{+},n^{+}\right)=\mathcal{F}^{c}\left(y_{\partial}\left(y^{+},n^{+}\right)\right)\cdot n^{+} & \textup{ on }\epsilon\qquad\forall\epsilon\in\mathcal{E}_{\mathrm{iso}},
\end{align*}
where $T_{\partial}$ is the prescribed boundary temperature, $C_{\partial,i}$
is the boundary concentration of the $i$th species,
\[
C_{\partial,i}\left(y^{+}\right)=\frac{T^{+}}{T_{\partial}}C_{i}^{+}.
\]
$\rho_{\partial}$ is the boundary density, 
\[
\rho_{\partial}=\sum_{i=1}^{n_{s}}W_{i}C_{\partial,i},
\]
and $u_{\partial}$ is the boundary internal energy, which is evaluated
at $T_{\partial}$.

\subsection{Characteristic}

Let $y^{*}\left(y^{+},y_{\infty},n^{+}\right)$ be the characteristic
boundary value. Its derivation for non-reflecting inflow and outflow
boundary conditions can be found in~\citep{Joh20_2}. This boundary
condition is then given as

\begin{align*}
y_{\partial}\left(y^{+},n^{+}\right)=y^{*}\left(y^{+},y_{\infty},n^{+}\right) & \textup{ on }\epsilon\qquad\forall\epsilon\in\mathcal{E}_{\mathrm{cha}},\\
\mathcal{F}_{\partial,k}^{\nu}\left(y_{\partial}\left(y^{+},n^{+}\right),\nabla y^{+}\right)=\mathcal{F}_{k}^{\nu}\left(y_{\partial}\left(y^{+},n^{+}\right),\nabla y^{+}\right) & \textup{ on }\epsilon\qquad\forall\epsilon\in\mathcal{E}_{\mathrm{cha}},\\
\mathcal{F}_{\partial}^{c\dagger}\left(y^{+},n^{+}\right)=\mathcal{F}^{c\dagger}\left(y^{+},y_{\partial}\left(y^{+},n^{+}\right),n^{+}\right) & \textup{ on }\epsilon\qquad\forall\epsilon\in\mathcal{E}_{\mathrm{cha}}.
\end{align*}

\section{Supporting lemma~\label{sec:supporting-lemma}}

In the following, let $\Delta y$ denote the quantity
\[
\Delta y=\left(\Delta(\rho v),\Delta(\rho e_{t}),\Delta C_{1},\ldots,\Delta C_{n_{s}}\right)^{T}.
\]
The lemma below is related to that in~\citep[Appendix B]{Chi22_2}.
\begin{lem}
\label{lem:alpha-constraints}Assume that $y=\left(\rho v,\rho e_{t},C\right)^{T}$
is in $\mathcal{G}$ and that $C_{i}>0,\:\forall i$. Then $\check{y}=y-\alpha^{-1}\Delta y$,
where $\alpha>0$, is also in $\mathcal{G}$ under the following conditions:

\begin{equation}
\alpha>\alpha^{*}\left(y,\Delta y\right)=\left.\max\left\{ \max_{i=1,\ldots,n_{s}}\frac{\Delta C_{i}}{C_{i}},\alpha_{T},0\right\} \right|_{\left(y,\Delta y\right)},\label{eq:alpha-constraint}
\end{equation}
where
\begin{equation}
\alpha_{T}=\begin{cases}
\frac{-\mathsf{b}+\sqrt{\mathsf{b}^{2}-4\rho^{2}u\mathsf{g}}}{2\rho^{2}u}, & \mathsf{b}^{2}-4\rho^{2}u\mathsf{g}\geq0\\
0, & \mathrm{otherwise}
\end{cases},\label{eq:alpha_T}
\end{equation}
$\mathsf{b}=-\rho e_{t}\mathsf{M}-\rho\Delta(\rho e_{t})+\rho v\cdot\Delta(\rho v)+2\rho u_{0}\mathsf{M}$,
$\mathsf{g}=\mathsf{M}\Delta(\rho e_{t})-\frac{1}{2}\left|\Delta(\rho v)\right|^{2}-u_{0}\mathsf{M}^{2}$,
and $\mathsf{M}=\sum_{i=1}^{n_{s}}W_{i}\Delta C_{i}$.
\end{lem}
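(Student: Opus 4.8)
The plan is to mimic the proof of Lemma~\ref{lem:beta-constraints}, separating the species-concentration components from the momentum/total-energy block. Writing $\check{y} = y - \alpha^{-1}\Delta y$ componentwise,
\[
\check{y} = \left(\rho v - \alpha^{-1}\Delta(\rho v),\ \rho e_t - \alpha^{-1}\Delta(\rho e_t),\ C_1 - \alpha^{-1}\Delta C_1,\ \ldots,\ C_{n_s} - \alpha^{-1}\Delta C_{n_s}\right)^T,
\]
I would first treat the concentrations: since $C_i > 0$, the inequality $C_i - \alpha^{-1}\Delta C_i > 0$ holds iff $\alpha > \Delta C_i / C_i$ when $\Delta C_i > 0$, and holds automatically for $\alpha > 0$ when $\Delta C_i \le 0$; maximizing over $i$ gives the first argument of $\alpha^*$, and then $\check{\rho} = \sum_i W_i \check{C}_i > 0$ follows at once.

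For positivity of temperature, recall from~(\ref{eq:Z-definition}) that $Z(y) := \rho^2 u^*(y) = \rho\,\rho e_t - |\rho v|^2/2 - \rho^2 u_0(y)$, with $T(y) > 0 \iff Z(y) > 0$, and $Z(y) > 0$ here because $y \in \mathcal{G}$. Substituting $\check{y}$ for $y$ in $Z$ (the quadratic-in-the-state term $\rho^2 u_0$ being expanded exactly as the $\rho^2 u_0$ term is in the proof of Lemma~\ref{lem:beta-constraints}), multiplying through by $\alpha^2$, and collecting powers of $\alpha$, I would obtain
\[
\alpha^2 Z(\check{y}) = Z(y)\,\alpha^2 + \mathsf{b}\,\alpha + \mathsf{g},
\]
with $\mathsf{M} = \sum_i W_i \Delta C_i$, $\mathsf{b} = -\rho e_t\,\mathsf{M} - \rho\,\Delta(\rho e_t) + \rho v \cdot \Delta(\rho v) + 2\rho u_0\,\mathsf{M}$, and $\mathsf{g} = \mathsf{M}\,\Delta(\rho e_t) - \tfrac12|\Delta(\rho v)|^2 - u_0\,\mathsf{M}^2$ --- precisely the quantities in the statement. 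Since $Z(y) > 0$, this is an upward-opening parabola in $\alpha$.

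It then remains only to read off a sufficient lower bound on $\alpha$. If the discriminant $\mathsf{b}^2 - 4Z(y)\,\mathsf{g}$ is negative, the parabola never vanishes and is therefore strictly positive, so $Z(\check{y}) > 0$ for every $\alpha > 0$ and one may take $\alpha_T = 0$; if it is nonnegative, $Z(\check{y}) > 0$ whenever $\alpha$ exceeds the larger root, which is exactly $\alpha_T$ in~(\ref{eq:alpha_T}) (and should that root be negative, the explicit $0$ inside the maximum defining $\alpha^*$ absorbs it). In either case, $\alpha > \alpha_T$ together with the concentration bound and $\alpha > 0$ gives $\rho u^*(\check{y}) > 0$, $\check{\rho} > 0$, and $\check{C}_i \ge 0$, i.e.\ $\check{y} \in \mathcal{G}$, and collapsing the three lower bounds into one maximum yields $\alpha^*$. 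The one step needing care --- the main obstacle --- is the algebraic reduction to $Z(y)\alpha^2 + \mathsf{b}\alpha + \mathsf{g}$: one must track the $\rho^2 u_0$ contribution and the signs arising from the single minus sign in $\check{y} = y - \alpha^{-1}\Delta y$ (in contrast to the $\pm$ of Lemma~\ref{lem:beta-constraints}) so that the coefficients come out as $\mathsf{b}$ and $\mathsf{g}$ exactly; the rest is a transcription of the earlier argument.
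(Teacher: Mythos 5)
Your proposal is correct and follows essentially the same route as the paper's own proof: handle the concentrations (and hence density) by the elementary ratio bound, then reduce positivity of $Z(\check{y})=\rho^{2}u^{*}(\check{y})$ to an upward-opening quadratic in $\alpha$ whose larger root (or $0$, when the discriminant is negative) gives $\alpha_{T}$. Your expansion $\alpha^{2}Z(\check{y})=Z(y)\,\alpha^{2}+\mathsf{b}\,\alpha+\mathsf{g}$ is in fact the algebraically consistent one given the stated definitions of $\mathsf{b}$ and $\mathsf{g}$ and the formula~(\ref{eq:alpha_T}) for $\alpha_{T}$ (the larger root of a quadratic with linear coefficient $+\mathsf{b}$), so no gap remains.
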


\begin{proof}
The proof is similar to that for Lemma~\ref{lem:beta-constraints}.
$\check{y}=y-\alpha^{-1}\Delta y$ can be expanded as
\[
\begin{split}\check{y}= & \left(\check{\rho v},\check{\rho e_{t}},\check{C_{1}},\ldots,\check{C}_{n_{s}}\right)^{T}\\
= & \left(\rho v-\alpha\Delta(\rho v),\rho e_{t}-\alpha\Delta(\rho e_{t}),C_{1}-\alpha\Delta C_{1},\ldots,C_{n_{s}}-\alpha\Delta C_{n_{s}}\right)^{T}.
\end{split}
\]
$\check{C_{i}}=C_{i}-\alpha^{-1}\Delta C_{i}>0,\;\forall i$ under
the condition 
\[
\alpha>\max\left\{ \max_{i=1,\ldots,n_{s}}\frac{\Delta C_{i}}{C_{i}},0\right\} .
\]
$Z\left(\check{y}\right)$, where $Z$ is defined as in Equation~(\ref{eq:Z-definition}),
can be expressed as
\begin{align*}
Z\left(y-\alpha^{-1}\Delta y\right)= & \sum_{i=1}^{n_{s}}W_{i}\left(C_{i}-\alpha^{-1}\Delta C_{i}\right)\left(\rho e_{t}-\alpha^{-1}\Delta(\rho e_{t})\right)\\
 & -\frac{1}{2}\left|\rho v-\alpha^{-1}\Delta(\rho v)\right|^{2}-\left[\sum_{i=1}^{n_{s}}W_{i}\left(C_{i}-\alpha^{-1}\Delta C_{i}\right)\right]^{2}u_{0},
\end{align*}
which, after multiplying both sides by $\alpha^{2}$, can be rewritten
as 
\begin{align}
\alpha^{2}Z\left(y-\alpha^{-1}\Delta y\right)= & \rho^{2}u\alpha^{2}-\mathsf{b}\alpha+\mathsf{g}.\label{eq:alpha-quadratic-form}
\end{align}
Setting the RHS of Equation~(\ref{eq:alpha-quadratic-form}) equal
to zero yields a quadratic equation with $\alpha$ as the unknown.
Since $\rho^{2}u$ is positive, the quadratic equation is convex.
As such, if $\mathsf{b}^{2}-4\rho^{2}u\mathsf{g}<0$, then no real
roots exist, and $Z\left(y-\alpha^{-1}\Delta y\right)>0$ for all
$\alpha\neq0$; otherwise, at least one real root exists, in which
case a sufficient condition to ensure $Z\left(y-\alpha^{-1}\Delta y\right)>0$
is $\alpha>\alpha_{0}$, where $\alpha_{0}$ is given by
\[
\alpha_{0}=\max\left\{ \frac{-\mathsf{b}+\sqrt{\mathsf{b}^{2}-4\rho^{2}u\mathsf{g}}}{2\rho^{2}u},0\right\} .
\]
\end{proof}

\end{document}